\documentclass[a4paper,12pt]{amsart}
\usepackage{amsmath,amssymb,amsthm,epsfig,epstopdf,url,array,hyperref}

\setlength{\oddsidemargin}{0cm}  
\setlength{\evensidemargin}{0cm} 
\setlength{\topmargin}{-2.0cm} \setlength{\textheight}{25cm}
\setlength{\textwidth}{17cm}

\title{ 
Counting polynomials with positive roots}
\author{Pavlo Yatsyna}
\address{Charles University, Faculty of Mathematics and Physics, Department of Algebra, Sokolov\-sk\' a 83, 18600 Praha~8, Czech Republic}
\email{p.yatsyna@matfyz.cuni.cz}
\author{B\l{}a\.zej \.Zmija}
\address{Charles University, Faculty of Mathematics and Physics, Department of Mathematical Analysis, Sokolov\-sk\' a 83, 18600 Praha~8, Czech Republic \newline
and Institute of Mathematics of the Polish Academy of Sciences, \'{S}niadeckich 8, 00-656 Warsaw, Poland}
\email{blazej.zmija@gmail.com}

\thanks{The first author was supported by {Charles University} programmes PRIMUS/24/SCI/010 and UNCE/24/SCI/022. The second author was supported by {Charles University} PRIMUS Research Programme PRIMUS/25/SCI/017.}

\newtheorem{theorem}{Theorem}
\newtheorem{lemma}[theorem]{Lemma}

\newtheorem{cor}[theorem]{Corollary}
\newtheorem{prop}[theorem]{Proposition}

\newtheorem{remark}[theorem]{Remark}

\newcommand{\R}{\mathbb{R}}
\newcommand{\N}{\mathbb{N}}
\newcommand{\Z}{\mathbb{Z}}
\newcommand{\dt}{{\rm d}t}
\newcommand{\rad}{\mathrm{rad}\hspace{0.05cm}}

\allowdisplaybreaks

\begin{document}

\begin{abstract}
    This paper investigates the number of monic integer polynomials of degree $n$ whose roots are all real and positive. We establish an asymptotic formula for the case of fixed trace by estimating the number of integer sequences satisfying Maclaurin's inequalities. For cubic polynomials, we derive a much more precise asymptotic result. Furthermore, we analyse the arithmetic properties of the discriminants of these polynomials, showing that a positive proportion of cubics have square-free discriminants.
\end{abstract}
\maketitle
\section{Introduction}

Cohn was one of the first to study the number of cubic number fields \cite{cohn}, showing that abelian fields are rare. This was followed by the seminal works of Davenport and Heilbronn \cite{DH1,DH2}. In more recent years, significant progress has been made in counting small-degree number fields, particularly in works such as \cite{bhargava05} by Bhargava on quartic and quintic fields. Research on cubic fields continues to develop,  as seen, for example, in \cite{CD}, \cite{Xiao}, which addresses cubic polynomials with a given Galois group, or abelian fields of trace one \cite{BO}. Notably, it remains an open question whether infinitely many cubic fields with prime discriminants exist. 

For a generic polynomial, the expected number of real roots of an equation
\[
A_0X^n-A_1X^{n-1}+\cdots+(-1)^nA_n=0,
\] 
where each $A_i\in\{-1,1\}$ chosen independently with equal probability, is at most $25(\log n)^2+12 \log n$, for $n$ sufficiently large \cite{LO}. If instead the coefficients $A_i$ are normally distributed with the density function $e^{-u^2/\pi^{1/2}}$, following Kac's formula \cite{Kac} the expected number of real roots is approximately $(2/\pi)\log n$. Several recent works, including those by Dubickas and Sha \cite{DS}, Bertók, Hajdu, and Pethő \cite{BHP}, and Dubickas \cite{Dubickas18}, have studied the asymptotic number of integer polynomials with coefficients bounded in absolute value by $H$, as $H \to \infty$, under various constraints. In particular, Dubickas \cite{Dubickas18} showed that the number of such polynomials with a fixed number of complex roots is different depending on whether polynomials have real roots or not. For irreducible polynomials (see, for example, \cite{Dubickas14}, for the reducible case), the restriction to integer coefficients corresponds to studying algebraic numbers that are roots of those polynomials. Polynomials with fixed leading and trailing coefficients were counted by Grizzard and Gunther in \cite{GG}, with respect to the multiplicative Weil height, without restrictions on the number of real roots.

For $n \in \mathbb{N}$, let $\mathcal{P}_n$ denote the set of all monic integer polynomials of degree $n$ with all roots real. For a given $k\in \N$, with $k<n$, and $A_1,\ldots,A_k\in \Z$, let $\mathcal{P}_n(A_1,\ldots,A_k)$ denote the subset of $\mathcal{P}_n$ consisting of polynomials whose first $k+1$ coefficients are fixed to $1,-A_1,\ldots,(-1)^kA_k$, as
\[
\mathcal{P}_n(A_1,\ldots,A_k):=\left\{X^n-A_1X^{n-1}+\cdots +(-1)^k A_{k}X^{n-k}\cdots=\prod^n_{i=1}(X-\alpha_i) \in\mathbb{Z}[X] \Bigg|  \alpha_i\in\mathbb{R}\right\}.
\]
Let $\mathcal{P}_n^+(A_1,\ldots,A_k)\subset \mathcal{P}_n(A_1,\ldots,A_k)$ be the subset of all polynomials with all roots real and positive. In this paper, we will be interested in counting the number of polynomials in $\mathcal{P}_n^+(A_1,\ldots,A_k)$. The coefficients of such polynomials satisfy \textit{Maclaurin's inequalities} for the elementary symmetric means, i.e. for $e_k=e_k(\alpha_1,\ldots, \alpha_n)=\sum_{1\le i_1<\cdots<i_k\le n} \alpha_{i_1}\cdots\alpha_{i_k}$, where $\alpha_j \in \R^+$, it holds that

\[\dfrac{e_1}{\binom{n}{1}}\ge \sqrt{\dfrac{e_2}{\binom{n}{2}}}\ge\ldots \ge \sqrt[k]{\dfrac{e_k}{\binom{n}{k}}}\ge \ldots \ge \sqrt[n]{\dfrac{e_n}{\binom{n}{n}}}.\] 

Hence, the sequence $(A_1,\ldots,A_k,\ldots)$ is \textit{attainable} (with respect to Maclaurin's inequality) if

\[\dfrac{A_1}{\binom{n}{1}}\ge \sqrt{\dfrac{A_2}{\binom{n}{2}}}\ge\ldots \ge \sqrt[k]{\dfrac{A_k}{\binom{n}{k}}}\ge \ldots \] 
holds.
The number of attainable sequences is at least as large as the number of monic integer polynomials with only positive real roots, and thus bounds from above the cardinality of $\mathcal{P}_n^+(A_1,\ldots,A_k)$. Here, the attainable sequences are the $n$-tuples $(B_1,\ldots,B_n)$ of natural numbers that satisfy Maclaurin's inequality, where $B_i=A_i$ for $i\le k$. This bound is sharp for quadratic polynomials, but even for $n=3$, there exist attainable sequences for which the polynomial does not have all real roots (for example, $X^3-4X^2+3X-1$). Better estimates can be obtained using improved inequalities similar to Maclaurin's; see, in particular, \cite{Tao}. The paper's first result is the following:

\begin{theorem}\label{ThmIneq1}
For every $n\geq 2$ and $k$ define $S_{n,k}(x) := \frac{x}{\binom{n}{k}}$ and $s_{n,k}(x):=S_{n,k}(x)^{1/k}$. Let $A=A_1 \in \N$ and
\begin{align*}
\mathcal{S}_{n}(A):= \left\{ (A_{2}, \ldots ,A_{n})\in \mathbb{N}^{n-1} | s_{n,1}(A_{1})\geq s_{n,2}(A_{2})\geq s_{n,3}(A_{3})\geq\ldots \geq s_{n,n}(A_{n}) \right\}.
\end{align*}
Then
\begin{align*}
\#\mathcal{S}_{n}(A) =  \left(\Phi_{n} + O_{\leq 1}\left(\Psi_{n} A^{-2}\right) \right) A^{\frac{(n-1)(n+2)}{2}},
\end{align*}
where ``$\leq 1$" in the subscript means that the implied constant is at most $1$, and
\begin{align*}
\Phi_{n} & := \frac{2^{n-1}}{n^{\frac{1}{2}(n^{2}+n-4)}}\frac{(n+1)!}{(2n)!} \prod_{k=1}^{n-2}\binom{n}{k}, \\
\Psi_{n} & := (n-1)\frac{n^{\frac{n}{n-1}}}{n^{\frac{(n-1)(n+2)}{2}}}\cdot\prod_{k=1}^{n-2}\binom{n}{k}^{1+\frac{1}{k}}.
\end{align*}
In particular,
\begin{align*}
\#\mathcal{S}_{n}(A)=\left(\frac{eA}{n}\right)^{\left(\frac{1}{2}+o_{n\to\infty}(1)\right)n^{2}},
\end{align*}
where $o_{n\to\infty}(1)$ is a quantity that tends to $0$ when $n\to\infty$ and does not depend on $A$.
\end{theorem}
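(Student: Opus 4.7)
The plan is to replace the counting problem by a volume computation in $\mathbb{R}_{>0}^{n-1}$ and then estimate the discrepancy. Rewriting the Maclaurin-type inequality $s_{n,k-1}(A_{k-1})\ge s_{n,k}(A_k)$ as $A_k\le \binom{n}{k}\bigl(A_{k-1}/\binom{n}{k-1}\bigr)^{k/(k-1)}$ shows that $\mathcal{S}_n(A)$ is exactly the set of lattice points in a bounded region $\mathcal{R}\subset\mathbb{R}_{>0}^{n-1}$. The expected main term is $\mathrm{vol}(\mathcal{R})$, and the task is to identify this volume with $\Phi_n A^{(n-1)(n+2)/2}$ and to bound the error by $\Psi_n A^{(n-1)(n+2)/2-2}$.

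I would proceed by iterated summation, peeling off one variable at a time from the innermost. For $k=n,n-1,\ldots,2$ introduce
\[
G_k(y)=\#\bigl\{(A_k,\ldots,A_n)\in\mathbb{N}^{n-k+1}:s_{n,k}(A_k)\le y\text{ and the Maclaurin constraints hold}\bigr\},
\]
together with its continuous analogue $V_k(y)$. Then $G_{n+1}=V_{n+1}=1$ and, after the substitution $t=s_{n,k}(A_k)$,
\[
G_k(y)=\sum_{A_k=1}^{\lfloor\binom{n}{k}y^k\rfloor}G_{k+1}\bigl(s_{n,k}(A_k)\bigr),\qquad V_k(y)=k\binom{n}{k}\int_0^y V_{k+1}(t)\,t^{k-1}\,dt.
\]
Induction on $k$ then shows that $V_k(y)=c_k y^{d_k}$ with $d_k=\sum_{j=k}^n j=(n+k)(n-k+1)/2$ and $c_k=c_{k+1}\,k\binom{n}{k}/d_k$. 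A telescoping product gives $V_2(A/n)=\Phi_n A^{(n-1)(n+2)/2}$; matching with the stated $\Phi_n$ reduces to elementary binomial identities, most conveniently $\prod_{k=2}^n d_k=(2n)!(n-1)!/(2^{n-1}(n+1)!)$ together with $\prod_{k=1}^{n-1}\binom{n}{k}=n\prod_{k=1}^{n-2}\binom{n}{k}$.

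The main obstacle is the error estimate. Writing $G_k(y)=V_k(y)+E_k(y)$, the passage from $G_k$ to $V_k$ introduces two sources of error at each level: (i) replacing $G_{k+1}$ by $V_{k+1}$ inside the sum, and (ii) replacing $\sum_{A_k=1}^{M_k}$ by an integral via $\sum_{m=1}^M f(m)=\int_0^M f(x)\,dx+O(f(M))$ for monotone $f$. The crucial observation is that at level $k$ the summation range has length $M_k=\lfloor\binom{n}{k}y^k\rfloor$, so the relative error produced at that level is of order $1/M_k\asymp y^{-k}$. Hence the dominant contribution occurs at the outermost level $k=2$, where $y=A/n$ and the relative error is $O(A^{-2})$; the deeper levels $k\ge 3$ yield strictly smaller relative errors and are absorbed. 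Converting each implicit constant into its explicit upper bound using the monotonicity of the integrand, together with the closed forms of $V_k$ already derived, produces the bound $\Psi_n A^{(n-1)(n+2)/2-2}$ stated in the theorem.

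For the final asymptotic, I would apply Stirling's formula to $\Phi_n$. The only terms contributing at order $n^2$ in $\log\Phi_n$ are $-\tfrac{n^2+n-4}{2}\log n$ and $\sum_{k=1}^{n-2}\log\binom{n}{k}$. For the latter, $\log\binom{n}{k}=nH(k/n)+O(\log n)$ where $H(x)=-x\log x-(1-x)\log(1-x)$, and a Riemann-sum argument combined with $\int_0^1 H(x)\,dx=\tfrac{1}{2}$ gives $\sum_{k=1}^{n-2}\log\binom{n}{k}=(\tfrac{1}{2}+o(1))n^2$. The remaining factors $(n-1)\log 2$, $\log(n+1)!$ and $\log(2n)!$ are all $O(n\log n)$. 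Consequently $\log\Phi_n=\tfrac{n^2}{2}\log(e/n)+o(n^2)$, and combining with the factor $A^{(n-1)(n+2)/2}=A^{n^2/2+O(n)}$ yields $\#\mathcal{S}_n(A)=(eA/n)^{(1/2+o(1))n^2}$ uniformly in $A$.
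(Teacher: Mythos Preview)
Your approach is essentially the paper's: both peel off variables from the innermost outward, replacing $\sum_{m\le x}m^{\alpha}$ by $x^{\alpha+1}/(\alpha+1)+O_{\le 1}(x^{\alpha})$, and your claim that the dominant error arises at the outermost level $k=2$ is exactly what the paper encodes in its recursively defined exponents $t_k,u_k$ (Lemmas~\ref{LemFormulat}--\ref{LemFormulau}), where one checks $t_{n-1}-u_{n-1}=2$. The one place to be careful is the precise constant $\Psi_n$ with the $O_{\le 1}$ guarantee: the factor $(n-1)$ in $\Psi_n$ arises because the new error introduced at \emph{each} of the $n-1$ levels is carried forward (the paper tracks this via $\psi_k=(k+1)\prod_{j}C_{n-j}$), so while the deeper levels are indeed absorbed in the exponent, they each still add one unit to the multiplicative constant and cannot simply be discarded.
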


And, as expected, here is the corollary for the count of monic polynomials with positive real roots:

\begin{cor}\label{CorIneq1}
    Let $A\in \N$, then
    \begin{align*}
       \#\mathcal{P}_{n}^{+}(A) \leq \left(\Phi_{n} +\Psi_{n} A^{-2} \right) A^{\frac{(n-1)(n+2)}{2}},
    \end{align*}
    where $\Phi_{n}$ and $\Psi_{n}$ are the same as in Theorem \ref{ThmIneq1}. In particular,
    \begin{align*}
       \#\mathcal{P}_{n}^{+}(A) \leq \left(\frac{eA}{n}\right)^{\left(\frac{1}{2}+o_{n\to\infty}(1)\right)n^{2}},
    \end{align*}
    where $o_{n\to\infty}(1)$ is a quantity that tends to $0$ when $n\to\infty$ and does not depend on $A$.
\end{cor}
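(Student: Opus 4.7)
The plan is that the corollary follows almost immediately from Theorem~\ref{ThmIneq1} once one sets up the coefficient map. Concretely, I would take any polynomial
\[
P(X) = X^n - A X^{n-1} + A_2 X^{n-2} - \cdots + (-1)^n A_n \;\in\; \mathcal{P}_n^+(A)
\]
and write $A_k = e_k(\alpha_1,\ldots,\alpha_n)$ by Vieta's formulas, where $\alpha_1,\ldots,\alpha_n \in \mathbb{R}^+$ are the roots. Positivity of the $\alpha_i$ forces each $e_k$ to be positive, and integrality of $P$ forces $A_k \in \mathbb{Z}$, so $A_k \in \mathbb{N}$. Maclaurin's inequalities, recalled in the introduction, then say precisely that $s_{n,1}(A_1) \geq s_{n,2}(A_2) \geq \cdots \geq s_{n,n}(A_n)$, i.e.\ $(A_2,\ldots,A_n) \in \mathcal{S}_n(A)$.

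Next I would observe that the map $P \mapsto (A_2,\ldots,A_n)$ is obviously injective (the coefficients determine the polynomial), so
\[
\#\mathcal{P}_n^+(A) \;\leq\; \#\mathcal{S}_n(A).
\]
To pass from the equality in Theorem~\ref{ThmIneq1} to the one-sided upper bound of the corollary, I would unpack the $O_{\leq 1}$ notation: since the error term there has absolute value at most $\Psi_n A^{-2}$ times $A^{(n-1)(n+2)/2}$, the triangle inequality yields
\[
\#\mathcal{S}_n(A) \;\leq\; \bigl(\Phi_n + \Psi_n A^{-2}\bigr) A^{\frac{(n-1)(n+2)}{2}},
\]
which combined with the injection above gives the claimed inequality. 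The asymptotic ``In particular'' statement then transfers verbatim from the corresponding statement in Theorem~\ref{ThmIneq1}, since $\#\mathcal{P}_n^+(A)$ is sandwiched below $\#\mathcal{S}_n(A)$ and the upper bound there already has the form $(eA/n)^{(1/2 + o_{n\to\infty}(1))n^2}$.

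There is essentially no obstacle here: all the analytic work lives in Theorem~\ref{ThmIneq1}, and the corollary is really just recording the straightforward implication ``polynomial with positive real roots $\Rightarrow$ attainable coefficient sequence'' that was already flagged in the discussion preceding the theorem. The only minor care needed is the conversion from two-sided $O_{\leq 1}$ error to a one-sided upper bound, which is purely a matter of applying the triangle inequality.
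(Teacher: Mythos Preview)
Your proposal is correct and matches the paper's approach: the paper does not give a separate proof of the corollary, treating it as an immediate consequence of Theorem~\ref{ThmIneq1} via the observation (already made in the introduction) that Maclaurin's inequalities force the coefficient tuple of any $P\in\mathcal{P}_n^+(A)$ to lie in $\mathcal{S}_n(A)$. Your write-up is if anything more explicit than the paper, spelling out the injectivity and the $O_{\leq 1}$-to-upper-bound step.
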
 

The restriction to positive roots above is necessary, as $\mathcal{P}_n(A)$ is infinite for $n\ge 2$ and any $A\in \Z$. We have the following, somewhat surprising result, which is derived from the application of the above-mentioned improved version of the Maclaurin type inequality by \cite{Tao}: 

\begin{theorem}\label{ThmGeneralRoot}
    Let $A_1,A_2, n \in \N$, and
    \begin{align*}
        M(A_{1},A_{2}):= \max\left\{\frac{1}{n}|A_{1}|,\left(\frac{1}{n(n-1)}|A_{2}|\right)^{1/2}\right\}.
    \end{align*}
    Then
    \begin{align*}
       \#\mathcal{P}_{n}(A_{1},A_{2}) \leq \prod_{k=1}^{n-3} \binom{n}{k} \cdot \left(\prod_{k=3}^{n} k^{k}\right)^{\frac{1}{2}} \cdot (3C)^{\frac{n^{2}+n-6}{2}}\cdot M(A_{1},A_{2})^{\frac{n^{2}+n-6}{2}},
    \end{align*}
    where $C=160e^{7}\approx 175461$.
    
    In particular,
    \begin{align*}
       \#\mathcal{P}_{n}(A_{1},A_{2})\leq \left(\frac{3Cn M(A_{1},A_{2})}{e^{1/2}}\right)^{\left(1+o_{n\to\infty}(1)\right)\frac{n^{2}}{2}} \leq M_{1}(A_{1},A_{2}) ^{\left(1+o_{n\to\infty}(1)\right)\frac{n^{2}}{2}},
    \end{align*}
    where $M_{1}(A_{1},A_{2}) := 3C\max\left\{|A_{1}|,|A_{2}|^{1/2}\right\}$ does not depend on $n$.
\end{theorem}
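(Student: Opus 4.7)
The plan is to apply the improved Maclaurin-type inequality of Tao \cite{Tao} to bound each coefficient $A_{k}$ with $k\geq 3$ in terms of the two prescribed ones $A_{1},A_{2}$, and then to count integer points in the resulting axis-parallel box in $\Z^{n-2}$. Concretely, I would invoke Tao's inequality as a black box in the form
\[
\left(\frac{|A_{k}|}{\binom{n}{k}}\right)^{1/k}\leq Ck^{1/2}\,M(A_{1},A_{2}) \qquad (3\leq k\leq n),
\]
valid for every real-rooted monic polynomial $X^{n}-A_{1}X^{n-1}+\cdots+(-1)^{n}A_{n}$, with absolute constant $C=160e^{7}$. Rearranging gives $|A_{k}|\leq C^{k}k^{k/2}\binom{n}{k}M(A_{1},A_{2})^{k}$, which is the only analytic input for the rest of the argument.

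Since $A_{k}\in\Z$ lies in an interval of half-length $B_{k}:=C^{k}k^{k/2}\binom{n}{k}M(A_{1},A_{2})^{k}$, the number of admissible integer values is at most $2B_{k}+1\leq 3B_{k}$ (the degenerate case $B_{k}<1$ forces $A_{k}=0$ and is trivially absorbed). Multiplying these bounds for $k=3,\ldots,n$ and using $3^{n-2}\leq 3^{(n^{2}+n-6)/2}$ to fold the factors of $3$ into the $C^{k}$ factors, the exponents of both $3C$ and $M(A_{1},A_{2})$ collapse to $\sum_{k=3}^{n}k=(n^{2}+n-6)/2$; the factor $\prod_{k=3}^{n}k^{k/2}$ becomes $\bigl(\prod_{k=3}^{n}k^{k}\bigr)^{1/2}$; and the identity $\binom{n}{k}=\binom{n}{n-k}$ re-indexes $\prod_{k=3}^{n}\binom{n}{k}$ as $\prod_{j=1}^{n-3}\binom{n}{j}$. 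Assembling these pieces reproduces the displayed bound verbatim. The asymptotic statement then follows by taking logarithms and invoking the entropy estimate $\sum_{k=1}^{n-3}\log\binom{n}{k}=\tfrac{1}{2}n^{2}+O(n\log n)$ together with $\sum_{k=3}^{n}k\log k=\tfrac{1}{2}n^{2}\log n-\tfrac{1}{4}n^{2}+O(n\log n)$; the final bound in terms of $M_{1}(A_{1},A_{2})$ comes from $(n/(n-1))^{1/2}\leq e^{1/2}$ for $n\geq 2$, which yields $3CnM(A_{1},A_{2})/e^{1/2}\leq M_{1}(A_{1},A_{2})$.

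The main obstacle is Step 1: producing Tao's inequality in exactly this quantitative form with the explicit constant $160e^{7}$. This is the whole analytic content of the theorem, and once granted, the counting and Stirling-type estimates are essentially routine. A minor bookkeeping subtlety is ensuring that the ``$+1$'' in $2B_{k}+1$ does not leak out as a prefactor worse than $(3C)^{(n^{2}+n-6)/2}$, which is handled by the inequality $3^{n-2}\leq 3^{(n^{2}+n-6)/2}$ valid for all $n\geq 3$.
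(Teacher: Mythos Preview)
Your proposal is correct and follows essentially the same route as the paper's proof: both apply Tao's inequality (the paper's Lemma~\ref{LemTao}) with $l=1$ and $s_{k}=A_{k}/\binom{n}{k}$ to obtain $|A_{k}|\leq \binom{n}{k}C^{k}k^{k/2}M(A_{1},A_{2})^{k}$, then bound the number of integer choices for each $A_{k}$ by $2B_{k}+1\leq 3B_{k}$ and multiply over $k=3,\ldots,n$. Your write-up is in fact more explicit than the paper's on two points: you spell out the inequality $3^{n-2}\leq 3^{(n^{2}+n-6)/2}$ needed to fold the factors of $3$ into $(3C)^{(n^{2}+n-6)/2}$, and you give the Stirling-type estimates and the bound $n/(n-1)\leq e$ that drive the asymptotic form, whereas the paper simply says ``by multiplying the above numbers, we get the result.''
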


More can be said about the number of polynomials with real and positive roots for low degrees. For degree one, there is only a single monic linear integer polynomial with a given trace. In the quadratic case, this is equivalent to counting the admissible tuples, as given by Maclaurin’s inequality, i.e. $\mathcal{P}_2^+(A)=\lceil\frac{1}{4}A^2\rceil$. However, for cubic polynomials, the problem becomes significantly more complex and interesting, as noted earlier. Our upper bound from Corollary \ref{CorIneq1} for $n=3$ gives $\mathcal{P}_{3}^{+}(A)\leq \frac{2}{405}A^{5}+\frac{2}{3^{3/2}}A^{3}$. In fact, we can obtain a much more precise result by carefully examining Robinson's algorithm (as seen here \cite{MS}):

\begin{theorem}\label{ThmCubic}
    Let $A\in \N$, then
   \begin{align*}
       \#\mathcal{P}_{3}^{+}(A) &= \frac{1}{480}A^{5} + O_{\leq 1}\left(2A^{3} \right).
    \end{align*}
\end{theorem}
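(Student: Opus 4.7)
The plan is to count via a variant of Robinson's algorithm: every polynomial in $\mathcal{P}_3^+(A)$ has the form $X^3 - AX^2 + BX - C$ with $B, C \in \N$, so I will sweep over $B$ and, for each admissible $B$, count the integer values of $C$ for which all three roots are real and positive. A short Vieta sign argument will show that once $A, B, C$ are all positive and the roots are all real, positivity of the roots is automatic; hence only the discriminant condition needs to be imposed.

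To make the condition explicit, I will depress the cubic by $X = Y + A/3$, obtaining $Y^3 + pY + q$ with
$$p = B - \tfrac{A^2}{3}, \qquad q = C_0(A,B) - C, \qquad C_0(A,B) := \tfrac{AB}{3} - \tfrac{2A^3}{27}.$$
The discriminant inequality $-4p^3 - 27q^2 \geq 0$ then translates into $B \leq A^2/3$ together with $|C - C_0(B)| \leq r(B)$, where $r(B) := \frac{2}{3\sqrt{3}}(A^2/3 - B)^{3/2}$. Hence
$$\#\mathcal{P}_3^+(A) = \sum_{B=1}^{\lfloor A^2/3 \rfloor} \#\bigl([C_0(B) - r(B),\, C_0(B) + r(B)] \cap \Z_{>0}\bigr).$$

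The geometric core of the argument is the identification of the critical value $B^{*} = A^{2}/4$ at which $C_0 - r = 0$, which I verify algebraically: with $u := A^2/3 - B$, the equation $C_0 = r$ becomes $A^3 - 9Au = 6\sqrt{3}\,u^{3/2}$, and $u = A^2/12$ (i.e.\ $B = A^2/4$) solves it exactly. For $B \geq B^{*}$ the whole interval lies in $\R_{>0}$ and contributes $2r(B) + O_{\leq 1}(1)$ integers, while for $B < B^{*}$ it is truncated at the origin and contributes $C_0(B) + r(B) + O_{\leq 1}(1)$ integers. Replacing the two sums by integrals then yields
$$\int_{A^2/4}^{A^2/3} 2r(B)\,dB = \frac{A^5}{1620}, \qquad \int_0^{A^2/4}(C_0(B) + r(B))\,dB = \frac{19\,A^5}{12960},$$
whose sum is exactly $A^5/480$, the expected main term.

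What remains is the error analysis, and I expect this to be the main technical obstacle. Both integrands are monotone on their respective intervals---$g(B) := C_0(B) + r(B)$ increases on $[0, B^{*}]$ and $f(B) := 2r(B)$ decreases on $[B^{*}, A^2/3]$---so the sum-to-integral discrepancies are controlled by the total variation, which in both cases equals $g(B^{*}) = f(B^{*}) = A^3/54$. Combined with the at most $\lfloor A^2/3 \rfloor$ integer-rounding errors of size $\leq 1$ and a small correction accounting for the fact that $B^{*}$ need not be an integer, this should give the bound $|\#\mathcal{P}_3^+(A) - A^5/480| \leq 2A^3$ claimed in the theorem. The delicacy lies in keeping every implicit constant below $1$, which will require carefully signed estimates rather than rough absolute-value bounds.
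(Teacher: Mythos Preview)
Your proposal is correct and follows essentially the same route as the paper. Both arguments reduce to the discriminant inequality, which gives the interval $[C_0(B)-r(B),\,C_0(B)+r(B)]$ for $C$ (the paper writes these endpoints as $G_\pm(A_2)$), identify the crossover $B^*=A^2/4$ where the lower endpoint hits zero, split the sum accordingly, and recover the two integrals $\frac{1}{1620}A^5$ and $\frac{19}{12960}A^5$ summing to $\frac{1}{480}A^5$. The only differences are cosmetic: you depress the cubic to reach the discriminant condition while the paper quotes it directly; you justify positivity of the roots via Descartes' rule of signs whereas the paper invokes Robinson's lemma; and for the error you exploit the monotonicity of $g$ and $f$ (total variation $A^3/54$ on each piece, plus at most $A^2/3$ rounding contributions) in place of the paper's Euler--Maclaurin computation, which yields the sharper intermediate bound $\frac{3}{8}A^3+\frac{109}{108}A^2+\frac{1}{3}A$ before relaxing to $2A^3$.
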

Even stronger results of a similar nature are presented in Theorem \ref{ThemCubicabc} and Corollary \ref{Cor3Coeffs}. For cubic polynomials, we can also study other arithmetic properties, such as how often the discriminant is square-free. A positive proportion (which tends to $30.7056\%$ as the degree goes to infinity) of monic integer polynomials has a square-free discriminant \cite{BHP}. The upper bounds for the number of monic polynomials of bounded height and a given discriminant of the corresponding simple extension are due to \cite{DOS}.

We show that a positive proportion of cubic polynomials with positive roots has a square-free discriminant. We denote the discriminant of a polynomial $f$ by $\Delta_f$, defined as the resultant $\mathrm{Res}(f,f')$, where $f'$ is the derivative of $f$:

\begin{theorem}\label{ThmCubicSquarefreeDelta}
    Let $A\in \N$, and
    \begin{align*}
       \mathcal{P}_{3}^{\square+}(A) &:=\big\{f \in\mathcal{P}^+_3(A)\mid \Delta_f \text{ is square-free}\}.
    \end{align*}
    Then
    \begin{align*}
       \#\mathcal{P}_{3}^{\square+}(A) & \geq 3\cdot 10^{-5} A^{5} + O\left(\frac{A^{5}}{(\log\log A)^{2}}\right). 
    \end{align*}
\end{theorem}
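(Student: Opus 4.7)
I would prove this by a square-free sieve applied to the discriminant $\Delta_f$. For every $f(X) = X^3 - AX^2 + bX - c$ in $\mathcal{P}_3^+(A)$, the three positive roots sum to $A$ and hence lie in $(0, A)$, so $\Delta_f = \prod_{i<j}(\alpha_i - \alpha_j)^2 \in [0, A^6]$. Starting from
\begin{align*}
\#\mathcal{P}_3^{\square +}(A) = \sum_{f\in \mathcal{P}_3^+(A)} \mu^2(\Delta_f) \geq \#\mathcal{P}_3^+(A) - \sum_{p \text{ prime}} N_p(A),
\end{align*}
where $N_p(A) := \#\{f\in \mathcal{P}_3^+(A) \mid p^2\mid \Delta_f\}$, only primes $p\leq A^3$ contribute, and Theorem \ref{ThmCubic} supplies $\#\mathcal{P}_3^+(A) = \frac{1}{480} A^5 + O(A^3)$. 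It therefore suffices to show that $\sum_p N_p(A)$ is at most $\left(\tfrac{1}{480} - 3\cdot 10^{-5}\right) A^5 + O(A^5/(\log\log A)^2)$.

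I would split the sum at a threshold $M = M(A)$ to be chosen. For \emph{small} primes $p \leq M$, bound $N_p(A)$ via local densities. Viewing $\Delta_f = -27 c^2 + (18Ab - 4A^3) c + (A^2 b^2 - 4b^3)$ as a quadratic in $c$ with leading coefficient $-27$, one sees that for $p \neq 3$ and each $(A, b) \bmod p^2$ outside a codimension-one exceptional locus, the congruence $\Delta_f \equiv 0 \pmod{p^2}$ has at most $2$ solutions in $c$, and on the exceptional locus at most $p$. This gives a local density of admissible $(b, c) \bmod p^2$ of order $O(1/p^2)$. Feeding this pointwise count into Robinson's parametrization of $\mathcal{P}_3^+(A)$ used in the proof of Theorem \ref{ThmCubic} (where, for each $b$, the valid $c$ form a union of short intervals of total length $\asymp A^3/\sqrt{A^2 - 3b}$) yields $N_p(A) \leq C_p \cdot A^5/p^2 + O(A^4/p)$ with $C_p$ bounded by an absolute constant; the primes $p = 2, 3$ are handled by direct inspection.

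For \emph{large} primes $p > M$, the condition $p^2 \mid \Delta_f$ together with $\Delta_f \neq 0$ forces $f \bmod p$ to have a double root $r$, and then $f \equiv (X-r)^2(X - (A-2r)) \pmod{p}$ determines $(b, c) \bmod p$ from $r\in \mathbb{Z}/p\mathbb{Z}$; lifting to mod $p^2$ yields at most $O(p)$ residues per $r$, so $N_p(A) \ll A^5/p^2$. Summing gives $\sum_{p > M} N_p(A) \ll A^5/(M \log M)$.

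Choosing $M$ so that $M \log M \asymp (\log\log A)^2$ collapses both error contributions into $O(A^5/(\log\log A)^2)$; the leading constant becomes $(1/480)(1 - \sum_p C_p/p^2)$, which is strictly positive and, since $\sum_p 1/p^2 < 1/2$, numerically exceeds $3\cdot 10^{-5}$ with room to spare. The main obstacle is the local-density transfer: $\mathcal{P}_3^+(A)$ is a curved domain in $(b, c)$-space bounded by loci coming from Maclaurin's inequalities, so one cannot naively replace the congruence density on $\mathcal{P}_3^+(A)$ by the density on $\mathbb{Z}^2$. I would address this by threading the congruence condition through Robinson's algorithm (the one used in Theorem \ref{ThmCubic}), which gives an explicit interval of valid $c$ for each $b$; the boundary error produced there is absorbed into the $O(A^4/p)$ term, and this is where the $(\log\log A)^{-2}$ shape of the error term is pinned down.
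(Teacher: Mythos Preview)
Your outline has the right ingredients (square-free sieve, small/large prime split, threading congruences through the Robinson parametrisation), but the displayed inequality
\[
\#\mathcal{P}_{3}^{\square+}(A)\ \geq\ \#\mathcal{P}_{3}^{+}(A)-\sum_{p}N_{p}(A)
\]
is only a union bound, and it does \emph{not} deliver a positive main term. Modulo $4$ one has $\Delta_{f}\equiv (c+Ab)^{2}$, so $4\mid\Delta_{f}$ exactly when $c\equiv Ab\pmod 2$: the local density at $p=2$ is $\tfrac12$, i.e.\ $C_{2}/4=\tfrac12$, already violating the implicit hypothesis $C_{p}\leq 1$ behind your appeal to $\sum_{p}1/p^{2}<\tfrac12$. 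Worse, when $3\mid A$ one checks $\Delta_{f}\equiv -4b^{3}\pmod 9$, so the density at $p=3$ is $\tfrac13$; together with $p=2$ this is already $\tfrac56$, and the primes $p\geq 5$ contribute a further $\sum_{p\geq 5}(2p^{-2}-p^{-3})\approx 0.18$, so $\sum_{p}\mathrm{density}_{p}>1$ and the subtraction is negative. The theorem is stated for all $A\in\N$, so this case cannot be ignored.

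The remedy is to replace the union bound by genuine inclusion--exclusion over the small primes, which converts $1-\sum_{p}\mathrm{density}_{p}$ into the product $\prod_{p\leq z}\bigl(1-\rho(p^{2})/p^{2}\bigr)$; the product is positive regardless of how large the individual densities are. This is what the paper does: it fixes $b$, applies a square-free sieve (Theorem~\ref{ThmSquarefreeQuadratic}) to the quadratic $c\mapsto\Delta_{A_{2}}(c)$, and obtains a main term $(G_{+}-G_{-})\prod_{p\leq z}(1-\rho(p^{2})/p^{2})$. Summing over $b$, the factor $\prod_{p\mid 2(A^{2}-3b)}(1-1/p)$ becomes essentially $\varphi(A^{2}-3b)/(A^{2}-3b)$, whose average in an arithmetic progression mod $3$ is computed explicitly (Lemma~\ref{LemPhi}); the case $3\mid A$, $3\mid b$ is discarded at the outset since then $27\mid\Delta_{f}$ identically. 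A smaller point: your uniform tail bound $N_{p}(A)\ll A^{5}/p^{2}$ cannot hold all the way to $p\asymp A^{3}$ (it would force $N_{p}=0$ for $p>A^{5/2}$, yet $\Delta_{f}$ reaches $\asymp A^{6}$); the paper instead uses the crude pointwise bound $\rho(p^{2})\leq 2$ (or $\leq p$ on the thin set $p\mid A^{2}-3b$) and sums, landing in $O(A^{5}/\log A)$, which is comfortably inside the stated error.
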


In the case of the set $\mathcal{P}_3(A,B)$ with restricted discriminant, we have the following result inspired by the recent work of Badziahin \cite{Badziahin}:

\begin{theorem}\label{ThmUpBoundW1}
    Let $A, B, D\in \N$, and
    \begin{align*} 
        \mathcal{P}_3(A,B;D):=\big\{ f\in \mathcal{P}_3(A,B)\mid \Delta_{f}\leq D \big\}.
    \end{align*}
    Then
    \begin{align*}
        \#\mathcal{P}_{3}(A,B;D) \leq \left\{\begin{array}{ll}
            \frac{4}{27}\left(A^{2}-3B\right)^{3/2}, & \textrm{if } D\geq \frac{4}{27}\left(A^{2}-3B\right)^{3}, \\
            \frac{D}{\left(A^{2}-3B\right)^{3/2}}, & \textrm{if } D< \frac{4}{27}\left(A^{2}-3B\right)^{3}.
        \end{array}\right.
    \end{align*}
    In particular,
    \begin{align*}
       \# \mathcal{P}_3(A,B;D)\leq \frac{2}{3\sqrt{3}}\sqrt{D}.
    \end{align*}
\end{theorem}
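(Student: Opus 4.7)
The plan is to parameterise $\mathcal{P}_3(A,B)$ by its only free coefficient. Every $f \in \mathcal{P}_3(A,B)$ has the form $f(X) = X^3 - AX^2 + BX - C$ with $C \in \Z$, and for a monic cubic the condition that all three roots are real is equivalent to $\Delta_f \geq 0$. Hence $\#\mathcal{P}_3(A,B;D)$ is the number of integers $C$ satisfying $0 \leq \Delta_f \leq D$.

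First I would expand the discriminant explicitly as a polynomial in $C$, obtaining
\[
\Delta_f = -27 C^2 + (18 AB - 4 A^3) C + A^2 B^2 - 4 B^3,
\]
a downward-opening parabola in $C$. The key computation is that the discriminant of this quadratic (in $C$) simplifies, after some cancellation, to exactly $16(A^2 - 3B)^3$; this collapses the whole description to a single scalar $A^2 - 3B$, placing the vertex at $C_0 = (9AB - 2A^3)/27$, the apex value at $\Delta_{\max} = \frac{4}{27}(A^2 - 3B)^3$, and the two real zeros $C_\pm$ symmetrically at distance $\frac{2}{27}(A^2 - 3B)^{3/2}$ from $C_0$.

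Next I would split into the two regimes according to whether the horizontal line $y = D$ lies above or below the apex. When $D \geq \Delta_{\max}$, the condition $0 \leq \Delta_f(C) \leq D$ cuts out the single interval $[C_-, C_+]$ of length $\frac{4}{27}(A^2 - 3B)^{3/2}$, giving the first bound. When $D < \Delta_{\max}$, the admissible set is a union of two symmetric sub-intervals adjacent to $C_\pm$; setting $L = \frac{2}{27}(A^2 - 3B)^{3/2}$ and $r = \sqrt{(\Delta_{\max} - D)/27}$, the total length is $2(L - r)$, and rationalising via the identity $L^2 - r^2 = D/27$ yields
\[
2(L - r) = \frac{2(L^2 - r^2)}{L + r} \leq \frac{2 \cdot D/27}{L} = \frac{D}{(A^2 - 3B)^{3/2}},
\]
which is the second bound.

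The uniform estimate $\#\mathcal{P}_3(A,B;D) \leq \frac{2}{3\sqrt{3}}\sqrt{D}$ then follows by minimising the two case-bounds over the parameter $t = (A^2 - 3B)^{3/2}$: the first, $4t/27$, is increasing while the second, $D/t$, is decreasing, and they coincide at $t = 3\sqrt{3D}/2$ with common value $\frac{2}{3\sqrt{3}}\sqrt{D}$. The main technical ingredient is the discriminant-of-the-discriminant identity simplifying so cleanly to $16(A^2 - 3B)^3$; once that is in hand, the rest is elementary quadratic geometry together with the standard estimate bounding the number of integer points in an interval by its length.
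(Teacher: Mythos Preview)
Your proposal is correct and follows essentially the same route as the paper: treat $\Delta_f$ as a downward parabola in the free coefficient, use the identity $\Delta_{\Delta_f}=16(A^2-3B)^3$ to locate the vertex and roots, split according to whether $D$ lies above or below the apex, and in the small-$D$ case rationalise $L-r$ via $L^2-r^2=D/27$ (the paper writes this as $Z^{1/2}-(Z-T)^{1/2}<T/Z^{1/2}$, which is the same identity). The only cosmetic difference is that you phrase the uniform bound as the crossing point of the two monotone case-bounds in $t=(A^2-3B)^{3/2}$, whereas the paper simply asserts that each case is $\le \frac{2}{3\sqrt{3}}\sqrt{D}$; and both you and the paper silently bound the number of integer points in an interval by its length, ignoring the possible ``$+1$''.
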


Another condition we may impose on $\Delta_f$ is that it has few prime factors. Applying a result of Iwaniec \cite{Iwaniec}, we derive the following consequence of Theorem \ref{ThmAlmostPrimeDisc} and Proposition \ref{PropAlmostPrime}:

\begin{theorem}\label{ThmAlmostPrimeDiscLowBound}
    For every $H>1$ there are 
    \begin{align*}
        \gg \frac{H^{3}}{(\log H)^{2}}
    \end{align*}
    integral triples $(A,B,C)\in [-H,H]^{3}$ such that the discriminant of the polynomial $X^{3}-AX^{2}+BX-C$ has at most two prime factors.
\end{theorem}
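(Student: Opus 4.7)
The plan is to view the discriminant of the cubic $f(X) = X^{3} - AX^{2} + BX - C$ as a univariate quadratic polynomial in the variable $C$ and then invoke Iwaniec's theorem on almost-primes represented by irreducible integer quadratics.

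First, I would write out
\[
\Delta_{f} = -27 C^{2} + (18AB - 4A^{3})\, C + (A^{2}B^{2} - 4B^{3}),
\]
which is a quadratic in $C$ whose own discriminant in the variable $C$ evaluates to $16(A^{2} - 3B)^{3}$. Hence this quadratic is irreducible over $\mathbb{Q}$ precisely when $A^{2} - 3B$ is not a perfect square. Parametrising the exceptional pairs via $A^{2} - 3B = k^{2}$ (with $|k| \ll H^{1/2}$) shows that the number of $(A,B) \in [-H,H]^{2}$ failing this condition is $O(H^{3/2})$, so at least $\gg H^{2}$ \emph{admissible} pairs remain.

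Second, for each admissible $(A,B)$ I would apply Iwaniec's theorem---the content of Theorem~\ref{ThmAlmostPrimeDisc}---to conclude that $\gg H/\log H$ values of $C \in [-H,H]$ produce $|\Delta_{f}|$ with at most two prime factors. The implied constant depends on the local densities of the quadratic, and hence on the factorisation of $16(A^{2}-3B)^{3}$. Proposition~\ref{PropAlmostPrime} then supplies uniformity across the family of quadratics indexed by $(A,B)$, ensuring that for a positive proportion of admissible pairs this constant can be bounded below, up to at most an extra logarithmic factor. Multiplying a lower bound $\gg H/(\log H)^{2}$ per good pair by the $\gg H^{2}$ good pairs yields the claimed total of $\gg H^{3}/(\log H)^{2}$ triples.

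The hard part is this final uniformity step. Iwaniec's constant depends sensitively on the singular series of the specific quadratic, which can degenerate when $A^{2}-3B$ happens to be divisible by many small primes or exhibits special $p$-adic behaviour. Controlling the set of bad pairs---so that a positive proportion of $(A,B) \in [-H,H]^{2}$ survive with a uniform lower bound on the singular series---is what Proposition~\ref{PropAlmostPrime} provides, and the loss of one additional factor of $\log H$ compared to the naive bound $H^{3}/\log H$ is the standard price paid when passing from a pointwise sieve estimate to a uniform one in this type of two-parameter family.
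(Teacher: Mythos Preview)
Your overall architecture---treat $\Delta_{f}$ as a quadratic in $C$, apply Iwaniec (Theorem~\ref{ThmAlmostPrimeDisc}) for each admissible pair $(A,B)$, and sum---is exactly the paper's approach. However, two concrete points in your execution are wrong, and they stem from swapping the roles of Theorem~\ref{ThmAlmostPrimeDisc} and Proposition~\ref{PropAlmostPrime}.

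First, your count of exceptional pairs is incorrect: from $A^{2}-3B=k^{2}$ you assert $|k|\ll H^{1/2}$, but since $|A|$ ranges up to $H$ one only has $|k|\leq\sqrt{A^{2}+3H}\ll H$. The true size of the exceptional set is $\asymp H\log H$, and \emph{that} is precisely the content of Proposition~\ref{PropAlmostPrime}; it has nothing to do with uniformity of singular series. Second, Theorem~\ref{ThmAlmostPrimeDisc} already delivers $\gg H/(\log H)^{2}$ values of $C$ with a \emph{universal} implied constant: the extra logarithm is absorbed there, not in a later averaging step, by bounding $\rho(p)\leq 2$ for all odd $p$ (and $\rho(2)=1$) and invoking a Mertens-type estimate on $\prod_{3\le p}(1-2/p)/(1-1/p)$. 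So the final step is simply $(\gg H^{2}\text{ admissible pairs})\times(\gg H/(\log H)^{2}\text{ each})$, with no further ``uniformity'' argument needed and no mechanism of the kind you describe being supplied by Proposition~\ref{PropAlmostPrime}.
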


\section*{Notation}

Throughout the paper, we will use the standard big $O$ and small $o$ notations: given functions $f(x)$ and $g(x)$ we write $f(x)=O(g(x))$ if there exists a constant $C>0$ such that $f(x)<Cg(x)$ for all $x$, and $f(x)=o(g(x))$ if $\lim_{x\to\infty}\frac{f(x)}{g(x)} = 0$. We will also utilise the Vinogradov notation: $f(x)\ll g(x)$ if and only if $f(x)=O(g(x))$. The symbol $f(x)\sim g(x)$ means $\lim_{x\to\infty}\frac{f(x)}{g(x)}=1$.

Sometimes we will need more precise information: $f(x)=O_{\leq 1}(g(x))$ means that $f(x)\leq g(x)$ (so the implied constant is less than or equal to $1$). If functions in more than one variable are considered, the symbol $o_{n\to\infty}(1)$ denotes a quantity that goes to $0$ as $n\to\infty$.

\section*{Acknowledgments}

We are extremely grateful to Vítězslav Kala for his many valuable suggestions, and to Giacomo Cherubini for his insightful discussions.

\section{General upper bound}

In this section, we present (in Theorem \ref{ThmIneqMain} below) a generalisation of Theorem \ref{ThmIneq1}. We provide this statement because we believe that it may be helpful in problems where inequalities similar to Maclaurin's inequality occur. It is possible to move even further and replace the exponents $\frac{k+1}{k}$ by terms of a fixed sequence. However, the formulas become more complicated, so we skip it.

The specialisation of Theorem \ref{ThmIneqMain} to Theorem \ref{ThmIneq1} is shown in the next section.

\begin{theorem}\label{ThmIneqMain}
Consider a sequence $\mathcal{B}=(B_{1},\ldots , B_{n})$ of positive numbers and length $n\geq 2$. Let $A=A_1\in \mathbb{N}$, and
\begin{align*}
\mathcal{S}_{\mathcal{B}}(A):= \left\{(A_{2},\ldots ,A_{n})\in \mathbb{N}^{n-1} \bigg| A_{k+1}\leq B_{k}A_{k}^{\frac{k+1}{k}} \textrm{ for every } k\in\{1,\ldots ,n-1\} \right\}.
\end{align*}    
Then:
\begin{align*}
\#\mathcal{S}_{\mathcal{B}}(A)=\left(\Phi_{\mathcal{B}}+O_{\leq 1}(\Psi_{\mathcal{B}}A^{-2})\right)A^{\frac{(n-1)(n+2)}{2}},
\end{align*}
where
\begin{align*}
\Phi_{\mathcal{B}} & = 2^{n-1}n\frac{(n+1)!}{(2n)!}\prod_{k=1}^{n-1}B_{k}^{\frac{(n-k)(n+k+1)}{2(k+1)}}, \\
\Psi_{\mathcal{B}} & = (n-1)\prod_{k=1}^{n-2}B_{k}^{\frac{(n-k-1)(n+k+2)}{2(k+1)}}\cdot\prod_{k=1}^{n-2}\max\left\{1,B_{k}^{-\frac{1}{k+1}}\right\}.
\end{align*}
\end{theorem}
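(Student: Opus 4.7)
The plan is to interpret $\#\mathcal{S}_\mathcal{B}(A)$ as a lattice-point count in the region $R_\mathcal{B}(A)\subset\mathbb{R}^{n-1}_{\geq 0}$ defined by the inequalities $A_{k+1}\leq B_k A_k^{(k+1)/k}$ for $k=1,\ldots,n-1$, compute its volume exactly to obtain the $\Phi_\mathcal{B}A^{(n-1)(n+2)/2}$ main term, and then control the discretization error by a descending induction that peels off one coordinate at a time.

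For the volume, I would define
\[
G_m(x):=\int_0^{B_m x^{(m+1)/m}}\!\!\int_0^{B_{m+1}y_{m+1}^{(m+2)/(m+1)}}\!\!\cdots\!\int_0^{B_{n-1}y_{n-1}^{n/(n-1)}} dy_n\cdots dy_{m+1}
\]
for $m=1,\ldots,n$ with $G_n\equiv 1$. Writing $G_m(x)=C_m x^{\alpha_m}$ and integrating one layer yields $\alpha_{m-1}=\tfrac{m}{m-1}(\alpha_m+1)$ and $C_{m-1}=C_m B_{m-1}^{\alpha_m+1}/(\alpha_m+1)$. Starting from $\alpha_n=0$ and $C_n=1$, induction gives the closed forms $\alpha_m=(n-m)(n+m+1)/(2m)$ and $\alpha_m+1=(n-m+1)(n+m)/(2m)$, so in particular $\alpha_1=(n-1)(n+2)/2$ as claimed. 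Telescoping the $C_m$ product and simplifying via $\prod_{k=1}^{n-1}(n+k+1)=(2n)!/(n+1)!$ produces exactly $C_1=\Phi_\mathcal{B}$.

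For the error, let $T_m(x)$ be the integer analogue of $G_m(x)$, so that $\#\mathcal{S}_\mathcal{B}(A)=T_1(A)$ and $T_m(x)=\sum_{y=1}^{\lfloor B_m x^{(m+1)/m}\rfloor}T_{m+1}(y)$. By descending induction I would prove
\[
T_m(x)=C_m x^{\alpha_m}+O_{\leq 1}\!\bigl(D_m\,x^{\alpha_m-(m+1)/m}\bigr),
\]
with base case $T_{n-1}(x)=B_{n-1}x^{n/(n-1)}+O_{\leq 1}(1)$. The analytic input at each step is the two-sided bound $\tfrac{M^{\alpha+1}}{\alpha+1}\leq\sum_{y=1}^M y^\alpha\leq\tfrac{M^{\alpha+1}}{\alpha+1}+(M+1)^\alpha$ for $\alpha\geq 0$, applied at $M=\lfloor B_m x^{(m+1)/m}\rfloor$. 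The ``new'' error contributed at level $m$ (combining the Euler remainder with the floor discrepancy) has size $\sim C_{m+1}B_m^{\alpha_{m+1}}x^{\alpha_m-(m+1)/m}$, while the propagated error from level $m+1$ has $x$-exponent $\alpha_m-(m+2)/m<\alpha_m-(m+1)/m$, hence is dominated for $x\geq 1$. Tracking the gap $c_m:=\alpha_m-\beta_m$ through $c_m=\tfrac{m+1}{m}\min(1,c_{m+1})$ with $c_{n-1}=n/(n-1)>1$ gives $c_m=(m+1)/m$ at every level, so $c_1=2$, producing the $A^{-2}$ in the theorem.

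The main obstacle---and the source of the $\max\{1,B_k^{-1/(k+1)}\}$ factor in $\Psi_\mathcal{B}$---is the following. When one re-expresses the propagated error in the $B_m^{\alpha_{m+1}}$-form needed to combine it with the new error, its actual $B_m$-exponent turns out to be $\beta_{m+1}+1=\alpha_{m+1}-\tfrac{1}{m+1}$, short by exactly $\tfrac{1}{m+1}$. Bounding $B_m^{-1/(m+1)}\leq\max\{1,B_m^{-1/(m+1)}\}$ is trivial when $B_m\geq 1$ but genuinely needed in the degenerate regime $B_m<1$, and it leaves behind precisely the product $\prod_{k=1}^{n-2}\max\{1,B_k^{-1/(k+1)}\}$. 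Unrolling the two-term recursion for $D_m$, telescoping the $B_k$-powers $\alpha_{k+1}=(n-k-1)(n+k+2)/(2(k+1))$, and absorbing the cumulative numerical constants into the prefactor $(n-1)$ then recovers $\Psi_\mathcal{B}$ in the stated form. The real work throughout is maintaining the $O_{\leq 1}$ property over $n-2$ telescoping steps: this rules out any coarse estimate such as $(M+1)^\alpha\leq 2^\alpha M^\alpha$, which would inject a factor $2^{\alpha}$ at every level and ruin the implied constant.
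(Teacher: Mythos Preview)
Your proposal is correct and follows essentially the same approach as the paper's proof: a descending induction that peels off one coordinate at a time, driven by the estimate $\sum_{y\leq x} y^{\alpha} = x^{\alpha+1}/(\alpha+1) + O_{\leq 1}(x^{\alpha})$ together with the crude bound $\sum_{y\leq x} y^{\alpha}\leq x^{\alpha+1}$ for the propagated error. Your exponent recursion $\alpha_{m-1}=\tfrac{m}{m-1}(\alpha_{m}+1)$ and gap recursion $c_{m}=\tfrac{m+1}{m}\min(1,c_{m+1})$ are re-indexed versions of the paper's sequences $t_{k}$ and $t_{k}-u_{k}$ (with $m=n-k$), and your identification of the source of the $\max\{1,B_{k}^{-1/(k+1)}\}$ factor matches the paper's $C_{n-j}=\max\{B_{n-j-1}^{t_{j}},B_{n-j-1}^{u_{j}+1}\}$ exactly.
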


We will need some lemmas:

\begin{lemma}[Euler's summation formula]\label{LemEuler}
    If $f$ has continuous derivative $f'$ on the interval $[y,x]$, where $0<y<x$, then
    \begin{align*}
        \sum_{y<n\leq x} f(n) = \int_{y}^{x}f(t)\dt + \int_{y}^{x}(t-\lfloor t\rfloor )f'(t)\dt -(x-\lfloor x\rfloor)f(x) + (y-\lfloor y\rfloor )f(y).
    \end{align*}
\end{lemma}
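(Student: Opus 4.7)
The plan is to prove Euler's summation formula by subdividing the integration interval $[y,x]$ at the consecutive integers it contains, performing integration by parts on each unit piece, and collecting the resulting telescoping contributions.

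Concretely, I would set $M := \lfloor y\rfloor + 1$ and $N := \lfloor x\rfloor$, so that the integers $n$ with $y<n\leq x$ are exactly $M, M+1, \ldots , N$; the left-hand side of the formula then equals $\sum_{n=M}^{N} f(n)$. The key computation is to evaluate $\int_{y}^{x}(t - \lfloor t\rfloor)f'(t)\dt$ by decomposing the domain as $[y,M]\cup\bigcup_{k=M}^{N-1}[k,k+1]\cup [N,x]$. On each of these subintervals the value $\lfloor t\rfloor$ is constant: it equals $\lfloor y\rfloor = M-1$ on $[y,M]$, equals $k$ on $[k,k+1]$, and equals $N$ on $[N,x]$. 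Consequently, the integrand on each piece has the form $(t-c)f'(t)$ for a constant $c$, and a single application of integration by parts (differentiating $t-c$ and integrating $f'$) yields: on the initial piece, $f(M) - (y-\lfloor y\rfloor)f(y) - \int_{y}^{M} f(t)\dt$; on each middle piece, $f(k+1) - \int_{k}^{k+1}f(t)\dt$; and on the final piece, $(x-\lfloor x\rfloor)f(x) - \int_{N}^{x}f(t)\dt$.

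Summing the three groups of contributions, the point-evaluation terms assemble into $f(M) + \sum_{k=M}^{N-1} f(k+1) = \sum_{n=M}^{N} f(n)$, while the subinterval integrals of $f$ recombine into the single integral $\int_{y}^{x} f(t)\dt$. This produces the equality $\int_{y}^{x}(t-\lfloor t\rfloor)f'(t)\dt = \sum_{n=M}^{N}f(n) + (x-\lfloor x\rfloor)f(x) - (y-\lfloor y\rfloor)f(y) - \int_{y}^{x}f(t)\dt$, and a trivial rearrangement is exactly the stated identity.

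There is no genuine obstacle in this argument---Euler's summation formula is a classical identity---so the only point requiring mild care is the degenerate situation where $y$ or $x$ happens to be an integer. In that case either $[y,M]$ or $[N,x]$ shrinks to a single point, but simultaneously the corresponding fractional part $y-\lfloor y\rfloor$ or $x-\lfloor x\rfloor$ vanishes, so the boundary terms and degenerate integrals are all zero and the formula remains consistent. The hardest part is therefore purely bookkeeping: making sure the endpoint pieces are correctly labelled so that the telescoping across the unit subintervals aligns with the two fractional-part boundary corrections in the statement.
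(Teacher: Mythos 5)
Your argument is correct and is essentially the standard textbook derivation; the paper itself offers no proof, simply citing Apostol's Theorem 3.1, whose proof proceeds by the same partition of $[y,x]$ at the integers it contains (Apostol organizes the computation around $\int \lfloor t\rfloor f'(t)\,\mathrm{d}t$ rather than applying integration by parts piecewise to $(t-\lfloor t\rfloor)f'(t)$, but this is only a cosmetic difference). One small bookkeeping case you did not mention: if $(y,x]$ contains no integer at all, then $M>N$ and your decomposition into subintervals does not apply as written; in that case $\lfloor t\rfloor$ is constant on all of $[y,x]$, a single integration by parts gives $\int_{y}^{x}(t-\lfloor t\rfloor)f'(t)\,\mathrm{d}t=(x-\lfloor x\rfloor)f(x)-(y-\lfloor y\rfloor)f(y)-\int_{y}^{x}f(t)\,\mathrm{d}t$, and the identity reduces to $0=0$, so the formula still holds. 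With that trivial case added, your proof is complete.
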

\begin{proof}
    See \cite[Theorem 3.1]{Apostol}.
\end{proof}

\begin{lemma}
For every $\alpha >0$ and $x\geq 1$: 
\begin{align*}
\sum_{n\leq x}n^{\alpha} = \frac{x^{\alpha+1}}{\alpha +1} + O_{\leq 1}(x^{\alpha})
\end{align*}
and
\begin{align*}
\sum_{n\leq x}n^{\alpha}\leq x^{\alpha+1}.
\end{align*}
\end{lemma}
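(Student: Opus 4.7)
The plan is to apply Euler's summation formula (Lemma \ref{LemEuler}) to the function $f(t)=t^{\alpha}$, for which $f'(t)=\alpha t^{\alpha-1}$ is continuous on every interval $[y,x]$ with $0<y<x$. Since $\sum_{n\leq x}n^{\alpha}=\sum_{y<n\leq x}n^{\alpha}$ as soon as $0<y<1$, I would pick $y\in(0,1)$ and let $y\to 0^{+}$ at the end. The formula gives
\begin{align*}
\sum_{n\leq x}n^{\alpha}=\int_{y}^{x}t^{\alpha}\,\dt+\alpha\int_{y}^{x}(t-\lfloor t\rfloor)t^{\alpha-1}\,\dt-(x-\lfloor x\rfloor)x^{\alpha}+y\cdot y^{\alpha},
\end{align*}
and for $\alpha>0$ the second integral converges as $y\to 0^{+}$ and the boundary term $y^{\alpha+1}$ vanishes, yielding
\begin{align*}
\sum_{n\leq x}n^{\alpha}=\frac{x^{\alpha+1}}{\alpha+1}+\alpha\int_{0}^{x}(t-\lfloor t\rfloor)t^{\alpha-1}\,\dt-(x-\lfloor x\rfloor)x^{\alpha}.
\end{align*}

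Next I would bound the error. Using $0\leq t-\lfloor t\rfloor<1$, the integral is at most $\alpha\int_{0}^{x}t^{\alpha-1}\,\dt=x^{\alpha}$, so it lies in $[0,x^{\alpha}]$; likewise $(x-\lfloor x\rfloor)x^{\alpha}\in[0,x^{\alpha})$. Their difference therefore lies in $[-x^{\alpha},x^{\alpha}]$, which is exactly the $O_{\leq 1}(x^{\alpha})$ claim with implied constant $\leq 1$.

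For the second inequality I would use the trivial monotonicity bound: each summand satisfies $n^{\alpha}\leq x^{\alpha}$ for $n\leq x$, so
\begin{align*}
\sum_{n\leq x}n^{\alpha}\leq\lfloor x\rfloor\cdot x^{\alpha}\leq x^{\alpha+1}.
\end{align*}

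There is no real obstacle here; the only small point of care is the justification of taking $y\to 0^{+}$ in Euler's formula, which works because the integrand $(t-\lfloor t\rfloor)t^{\alpha-1}$ is integrable near zero for $\alpha>0$ and the boundary term $y^{\alpha+1}$ tends to zero.
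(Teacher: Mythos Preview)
Your proof is correct and follows essentially the same approach as the paper: apply Euler's summation formula to $f(t)=t^{\alpha}$ and bound the fractional-part integral by $x^{\alpha}$; the trivial bound for the second inequality is identical. The only cosmetic difference is that the paper takes $y=1$ (picking up a $+1$ boundary term and working with $\int_{1}^{x}$), whereas you let $y\to 0^{+}$ and absorb everything into $\int_{0}^{x}$; both routes yield the same $O_{\leq 1}(x^{\alpha})$ error.
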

\begin{proof}
From Euler's summation formula:
\begin{align*}
\sum_{n\leq x}n^{\alpha} & = \int_{1}^{x} t^{\alpha}\dt +\alpha\int_{1}^{x}t^{\alpha -1}(t-\lfloor t\rfloor )\dt +1 -(x-\lfloor x\rfloor )x^{\alpha},
\end{align*}
and
\begin{align*}
\alpha\int_{1}^{x}t^{\alpha -1}(t-\lfloor t\rfloor )\dt +1 -(x-\lfloor x\rfloor )x^{\alpha} < \alpha\int_{1}^{x}t^{\alpha -1}\dt +1 = x^{\alpha }.
\end{align*}
For the second part:
\begin{align*}
\sum_{n\leq x}n^{\alpha}\leq \sum_{n\leq x}x^{\alpha}=\lfloor x\rfloor x^{\alpha}\leq x^{\alpha +1}.
\end{align*}
\end{proof}

\begin{lemma}\label{LemFormulat}
Let us define recursively the following sequence:
\begin{align*}
\left\{\begin{array}{l}
t_{0}=0, \\
t_{k+1}=\frac{n-k}{n-k-1}\left(t_{k} +1\right).
\end{array}\right.
\end{align*}
Then for every $k\in\{0,\ldots ,n-1\}$:
\begin{align*}
t_{k} = \frac{kn-\frac{k(k-1)}{2}}{n-k} = \frac{k(2n-k+1)}{2(n-k)}.
\end{align*}
\end{lemma}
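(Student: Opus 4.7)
\smallskip

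The plan is to prove the formula by straightforward induction on $k$, since the sequence is defined recursively starting from $t_0=0$ and the closed form proposed is simple enough that it should fall out of a direct algebraic manipulation.

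For the base case $k=0$, the formula gives $\frac{0\cdot(2n+1)}{2n}=0=t_{0}$, so it holds. For the inductive step, I would assume the formula is valid for some $k\in\{0,\ldots,n-2\}$ and compute
\begin{align*}
t_{k+1}=\frac{n-k}{n-k-1}\left(t_{k}+1\right)=\frac{n-k}{n-k-1}\cdot\frac{k(2n-k+1)+2(n-k)}{2(n-k)}=\frac{k(2n-k+1)+2(n-k)}{2(n-k-1)}.
\end{align*}
The only thing to check is that the numerator factors as $(k+1)(2n-k)$. Expanding:
\begin{align*}
k(2n-k+1)+2(n-k)=2kn-k^{2}+k+2n-2k=2n(k+1)-k(k+1)=(k+1)(2n-k),
\end{align*}
which gives
\begin{align*}
t_{k+1}=\frac{(k+1)(2n-k)}{2(n-k-1)}=\frac{(k+1)\bigl(2n-(k+1)+1\bigr)}{2\bigl(n-(k+1)\bigr)},
\end{align*}
matching the claimed formula with $k$ replaced by $k+1$. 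This completes the induction.

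There is no real obstacle here: the only concern is bookkeeping the denominator $n-k-1$, which stays positive because we restrict to $k\leq n-2$ in the inductive step (the formula is only asserted for $k\leq n-1$, and we never divide by zero). The first expression in the statement, $t_{k}=\frac{kn-\frac{k(k-1)}{2}}{n-k}$, is just a rewriting of $\frac{k(2n-k+1)}{2(n-k)}$, so both forms are verified simultaneously.
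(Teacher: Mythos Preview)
Your proof is correct. Both your argument and the paper's establish the formula by elementary means, but the routes differ slightly: the paper introduces the substitution $t_{k}=r_{k}/(n-k)$, which turns the recurrence into the simpler additive one $r_{k}=r_{k-1}+(n-k+1)$ with $r_{0}=0$, from which $r_{k}=kn-\tfrac{k(k-1)}{2}$ follows by summation; you instead verify the closed form directly by induction. Your approach is a pure verification and is arguably cleaner once the formula is known, while the paper's substitution has the advantage of showing how the formula can be \emph{discovered} rather than merely checked. Either way the content is the same, and your bookkeeping on the range $k\leq n-2$ for the inductive step (avoiding division by zero) is appropriate.
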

\begin{proof}
For every $k$ let us write $t_{k}=\frac{r_{k}}{n-k}$. Then from the recurrence defining $t_{k}$ we obtain:
\begin{align*}
\left\{\begin{array}{ll}
r_{0}=0,\\ 
r_{k}= r_{k-1}+n-k+1,
\end{array}\right.
\end{align*}
from which the result easily follows.
\end{proof}

\begin{lemma}\label{LemFormulau}
Let us define recursively the following sequence:
\begin{align*}
\left\{\begin{array}{l}
u_{1}=0, \\
u_{k+1}=\max\left\{\frac{n-k}{n-k-1}t_{k}, \frac{n-k}{n-k-1}(u_{k}+1)\right\}.
\end{array}\right.
\end{align*}
Then for every $k\in\{1,\ldots ,n-1\}$:
\begin{align*}
u_{k}=\frac{(k-1)(2n-k+2)}{2(n-k)}.
\end{align*}
\end{lemma}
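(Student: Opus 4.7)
I would prove this by induction on $k$, comparing the closed form to the previous lemma.

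For the base case $k=1$, the formula gives $\frac{(1-1)(2n+1)}{2(n-1)}=0$, matching $u_1=0$. For the inductive step, assume $u_k=\frac{(k-1)(2n-k+2)}{2(n-k)}$. The goal is to show that the maximum defining $u_{k+1}$ is attained by the term $\frac{n-k}{n-k-1}t_k$, which, using Lemma \ref{LemFormulat}, equals
\begin{align*}
\frac{n-k}{n-k-1}\cdot\frac{k(2n-k+1)}{2(n-k)}=\frac{k(2n-k+1)}{2(n-k-1)},
\end{align*}
and this is precisely the claimed formula for $u_{k+1}$ (i.e.\ the formula with $k$ replaced by $k+1$).

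To see that this term dominates the other one in the max, it suffices to show $t_k\geq u_k+1$, since $\frac{n-k}{n-k-1}>0$. The plan is to compute the difference directly:
\begin{align*}
t_k-u_k-1=\frac{k(2n-k+1)-(k-1)(2n-k+2)-2(n-k)}{2(n-k)}.
\end{align*}
Expanding $k(2n-k+1)-(k-1)(2n-k+2)=2(n-k+1)$, the numerator collapses to $2$, so $t_k-u_k-1=\frac{1}{n-k}$, which is strictly positive for $k\in\{1,\ldots,n-1\}$. Hence $u_{k+1}=\frac{n-k}{n-k-1}t_k$, completing the induction.

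There is essentially no real obstacle: the proof is a routine induction driven by the closed form for $t_k$ already established in Lemma \ref{LemFormulat}. The only thing to watch is that one has to verify the easy algebraic identity $t_k-u_k-1=\frac{1}{n-k}$ carefully enough to see that the first argument of the max wins at every step; given that $t_k$ and $u_k+1$ differ only by the small positive quantity $\frac{1}{n-k}$, one might worry that the two terms are close, but their strict inequality is all that is needed for the recursion to simplify.
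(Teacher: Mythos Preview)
Your proof is correct and essentially follows the same approach as the paper: both arguments use induction and resolve the $\max$ by checking that $t_k \geq u_k + 1$, reducing the recursion to $u_{k+1} = \frac{n-k}{n-k-1}t_k$. The only cosmetic difference is that the paper organizes the induction around the auxiliary identity $t_k - u_k = \frac{n-k+1}{n-k}$ (from which your $t_k - u_k - 1 = \frac{1}{n-k}$ follows immediately), whereas you induct directly on the closed form for $u_k$; the underlying algebra is identical.
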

\begin{proof}
It is enough to show that for every $k$:
\begin{align*}
t_{k}-u_{k}=\frac{n-k+1}{n-k}.
\end{align*}         
We use induction on $k$. For $k=1$ we have
\begin{align*}
t_{1}-u_{1}=\frac{n}{n-1}-0=\frac{n}{n-1}.
\end{align*}
If it is true for some $k-1$ then
\begin{align*}
t_{k}-u_{k} &= \frac{k(2n-k+1)}{2(n-k)}-\max\left\{\frac{n-k+1}{n-k}t_{k-1},\frac{n-k+1}{n-k}\left(t_{k-1}-\frac{1}{n-k+1}\right)\right\} \\
&= \frac{k(2n-k+1)}{2(n-k)}-\frac{n-k+1}{n-k}\cdot\frac{(k-1)(2n-k+2)}{2(n-k+1)}= \frac{n-k+1}{n-k},
\end{align*}
as desired.
\end{proof}

\begin{proof}[Proof of Theorem \ref{ThmIneqMain}]
We have:
\begin{align*}
\mathcal{S}_{\mathcal{B}}(A) = \sum_{A_{2}\leq B_{1}A_{1}^{2}}\ \ \sum_{A_{3}\leq B_{2}A_{2}^{3/2}}\ \ \cdots \sum_{A_{n}\leq B_{n-1}A_{n-1}^{n/(n-1)}} 1.
\end{align*}

Denote
\begin{align*}
\mathcal{T}_{k} := \sum_{A_{k}\leq B_{k-1}A_{k-1}^{k/(k-1)}}\ \ \sum_{A_{k+1}\leq B_{k}A_{k}^{(k+1)/k}}\ \ \cdots \sum_{A_{n}\leq B_{n-1}A_{n-1}^{n/(n-1)}} 1.
\end{align*}
In particular, $\mathcal{S}_{\mathcal{B}}(A)=\mathcal{T}_{2}$.

We will show that then for every $k$:
\begin{align}\label{EquTn-k}
\mathcal{T}_{n-k} = \prod_{j=1}^{k+1} \frac{B_{n-j}^{t_{j-1}+1}}{t_{j-1}+1} \cdot A_{n-k-1}^{t_{k+1}} +  O_{\leq 1}\left((k+1)\prod_{j=1}^{k}C_{n-j} \cdot A_{n-k-1}^{u_{k+1}} \right),
\end{align}
where $C_{n-j}:=\max\left\{ B_{n-j-1}^{t_{j}}, B_{n-j-1}^{u_{j}+1} \right\}$.

Indeed, for $k=0$ we have
\begin{align*}
\mathcal{T}_{n} = \sum_{A_{n}\leq B_{n-1}A_{n-1}^{n/(n-1)}} 1 = \left\lfloor B_{n-1}A_{n-1}^{n/(n-1)} \right\rfloor = B_{n-1}A_{n-1}^{n/(n-1)} + O_{\leq 1}(1).
\end{align*}
Assume it holds for some $k-1$ and denote for simplicity:
\begin{align*}
\varphi_{k} := \prod_{j=1}^{k+1} \frac{B_{n-j}^{t_{j-1}+1}}{t_{j-1}+1} \hspace{1cm} \textrm{ and } \hspace{1cm} \psi_{k}:=(k+1)\prod_{j=1}^{k}C_{n-j}.
\end{align*} 
Then for $k$ we get
\begin{align*}
\mathcal{T}_{n-k} & = \sum_{A_{n-k}\leq B_{n-k-1}A_{n-k-1}^{(n-k)/(n-k-1)}}\mathcal{T}_{n-k+1} \\
& = \varphi_{k-1}\sum_{A_{n-k}\leq B_{n-k-1}A_{n-k-1}^{(n-k)/(n-k-1)}} A_{n-k}^{t_{k}} + O_{\leq 1}\left(\psi_{k-1}\sum_{A_{n-k}\leq B_{n-k-1}A_{n-k-1}^{(n-k)/(n-k-1)}}A_{n-k}^{u_{k}} \right) \\
& = \varphi_{k-1}\cdot \frac{B_{n-k-1}^{t_{k}+1}}{t_{k}+1 }\cdot A_{n-k-1}^{\frac{n-k}{n-k-1}(t_{k}+1)} \\
& \ \ \ \ + O_{\leq 1}\left(\varphi_{k-1}B_{n-k-1}^{t_{k}}                                             A_{n-k-1}^{\frac{n-k}{n-k-1}t_{k}}\right)  + O_{\leq 1}\left( \psi_{k-1} B_{n-k-1}^{u_{k}+1} A_{n-k-1}^{\frac{n-k}{n-k-1}(u_{k}+1)} \right) \\
 & = \varphi_{k}A_{n-k-1}^{\frac{n-k}{n-k-1}(t_{k}+1)} + O_{\leq 1}\left(\varphi_{k-1}C_{n-k} A_{n-k-1}^{u_{k+1}}\right) + O_{\leq 1}\left( \psi_{k-1}C_{n-k}  A_{n-k-1}^{u_{k+1}} \right) \\
& = \varphi_{k}A_{n-k-1}^{t_{k+1}} + O_{\leq 1}\left( \psi_{k}A_{n-k-1}^{u_{k+1}} \right).
\end{align*}

By specifying to $k=n-2$ we get:
\begin{align*}
\mathcal{S}_{n}(A) & = \Phi_{n} A^{t_{n-1}} + O_{\leq 1}\left(\Psi_{n}A^{u_{n-1}}\right),
\end{align*}
where  
\begin{align*}
t_{n-1}& = \frac{(n-1)(n+2)}{2}, \\
\Phi_{n} & := \varphi_{n-2} = \prod_{j=1}^{n-1} \frac{B_{n-j}^{t_{j-1}+1}}{t_{j-1}+1}, \\
\Psi_{n} & := \psi_{n-2} = (n-1)\prod_{j=1}^{n-2}C_{n-j}.
\end{align*}
The result follows because $t_{n-1}-u_{n-1} =2$. Indeed, we can turn the numbers $\Phi_{\mathcal{B}}$ and $\Psi_{\mathcal{B}}$ defined above into a more explicit form using the following equalities:
\begin{align*}
\prod_{j=1}^{n-1}\frac{1}{t_{j-1}+1} & = \prod_{j=1}^{n-1}\frac{n-j+1}{n-j}\prod_{j=1}^{n-1}\frac{1}{t_{j}} = n\prod_{j=1}^{n-1}\frac{2(n-j)}{j(2n-j+1)} = 2^{n-1}n \frac{(n+1)!}{(2n)!}, \\
\prod_{j=1}^{n-1}B_{n-j}^{t_{j-1}+1} &= \prod_{k=1}^{n-1}B_{k}^{t_{n-k-1}+1}=\prod_{k=1}^{n-1}B_{k}^{\frac{k}{k+1}t_{n-k}}=\prod_{k=1}^{n-1}B_{k}^{\frac{(n-k)(n+k+1)}{2(k+1)}}, \\
\prod_{j=1}^{n-2}C_{n-j} &= \prod_{j=1}^{n-2}\max\left\{B_{n-j-1}^{t_{j}},B_{n-j-1}^{u_{j}+1}\right\}=\prod_{j=1}^{n-2}B_{n-j-1}^{t_{j}}\cdot\prod_{j=1}^{n-2}\max\left\{1,B_{n-j-1}^{u_{j}+1-t_{j}}\right\} \\
&= \prod_{j=1}^{n-2}B_{n-j-1}^{t_{j}}\cdot\prod_{j=1}^{n-2}\max\left\{1,B_{n-j-1}^{-\frac{1}{n-j}}\right\}=\prod_{k=1}^{n-2}B_{k}^{t_{n-k-1}}\cdot\prod_{k=1}^{n-2}\max\left\{1,B_{k}^{-\frac{1}{k+1}}\right\} \\
&= \prod_{k=1}^{n-2}B_{k}^{\frac{(n-k-1)(n+k+2)}{2(k+1)}}\cdot\prod_{k=1}^{n-2}\max\left\{1,B_{k}^{-\frac{1}{k+1}}\right\}.
\end{align*}
This finishes the proof.
\end{proof}

In practice, we will use conditions of the form $(A_{k+1}/D_{k+1})^{1/(k+1)}\leq (A_{k}/D_{k})^{1/k}$ (or similar). By using such numbers $D_{k}$ we can simplify some of the products that occur in the formulas for $\Phi_{\mathcal{B}}$ and $\Psi_{\mathcal{B}}$:

\begin{lemma}\label{LemSimplification}
Assume that for every $k\geq 1$
\begin{align*}
E_{k}=\frac{D_{k+1}}{D_{k}^{\frac{k+1}{k}}}.
\end{align*}
Then
\begin{align*}
\prod_{k=1}^{n-1}E_{k}^{\frac{(n-k)(n-k+1)}{2(k+1)}} &= \frac{1}{D_{1}^{\frac{(n-1)(n+2)}{2}}}\cdot\prod_{k=2}^{n}D_{k},  \\
\prod_{k=1}^{n-2}E_{k}^{\frac{(n-k-1)(n+k+2)}{2(k+1)}}\cdot\prod_{k=1}^{n-2}\max\left\{1,E_{k}^{-\frac{1}{k+1}}\right\} &= \frac{D_{n-1}^{\frac{n}{n-1}}}{D_{1}^{\frac{(n-1)(n+2)}{2}}}\cdot\prod_{k=1}^{n-2}D_{k}\cdot\prod_{k=1}^{n-2}\max\left\{D_{k}^{\frac{1}{k}},D_{k+1}^{\frac{1}{k+1}}\right\}. 
\end{align*}
\end{lemma}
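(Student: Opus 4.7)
The proof is a direct calculation: substitute the defining relation $E_{k}=D_{k+1}/D_{k}^{(k+1)/k}$ into both products on the left-hand side, expand, and collect the exponents of each $D_{j}$ separately, for $j=1,\ldots,n$. Each factor $E_{k}^{\alpha}$ contributes $D_{k+1}^{\alpha}$ in the numerator and $D_{k}^{\alpha(k+1)/k}$ in the denominator, so after re-indexing, an interior index $D_{j}$ picks up contributions from exactly two consecutive factors $E_{j-1}^{\alpha_{j-1}}$ and $E_{j}^{\alpha_{j}}$, while the endpoints pick up only one.

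For the first identity, with $\alpha_{k}$ the exponent on $E_{k}$, the interior exponent of $D_{j}$ (for $2\leq j\leq n-1$) is
\begin{align*}
\alpha_{j-1}-\alpha_{j}\,\frac{j+1}{j} = \frac{1}{2j}\bigl[(n-j+1)(n+j)-(n-j)(n+j+1)\bigr] = \frac{2j}{2j}=1,
\end{align*}
so the telescoping-type identity $(n-j+1)(n+j)-(n-j)(n+j+1)=2j$ reduces each interior exponent to $1$. The boundary contributions at $j=1$ and $j=n$ give the explicit factors $D_{1}^{-(n-1)(n+2)/2}$ and $D_{n}$ appearing on the right.

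For the second identity, one first rewrites every maximum factor using the substitution $E_{k}^{-1/(k+1)}=D_{k}^{1/k}/D_{k+1}^{1/(k+1)}$ combined with the elementary identity $\max\{Y,X\}=Y\max\{1,X/Y\}$:
\begin{align*}
\max\left\{1,E_{k}^{-1/(k+1)}\right\}=\frac{\max\left\{D_{k}^{1/k},D_{k+1}^{1/(k+1)}\right\}}{D_{k+1}^{1/(k+1)}}.
\end{align*}
This immediately isolates the max-product on the right-hand side, and the leftover factors $\prod_{k=1}^{n-2}D_{k+1}^{-1/(k+1)}$ are then merged with the $E_{k}^{\beta_{k}}$ contributions (where $\beta_{k}=(n-k-1)(n+k+2)/(2(k+1))$) via the analogous telescoping identity
\begin{align*}
(n-j)(n+j+1)-(n-j-1)(n+j+2)=2(j+1),
\end{align*}
which again collapses the interior exponents to produce the claimed explicit prefactors $D_{n-1}^{n/(n-1)}/D_{1}^{(n-1)(n+2)/2}$ and $\prod_{k=1}^{n-2}D_{k}$.

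The main obstacle is purely bookkeeping: the ranges $1\leq k\leq n-1$ (first identity) and $1\leq k\leq n-2$ (second identity), together with the shift induced by the correction factor $D_{k+1}^{-1/(k+1)}$, must be tracked carefully at the endpoints $j=1$, $j=n-1$, and $j=n$, where the contributions do not telescope. Once these boundary terms are collected, the two quadratic identities displayed above do all the non-trivial algebraic work and both equalities follow.
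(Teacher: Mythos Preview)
Your proposal is correct and follows essentially the same telescoping strategy as the paper. The only difference is packaging: the paper reindexes via $j=n-k$ and expresses the exponents through the auxiliary sequences $t_{j}$, $u_{j}$ from Lemmas~\ref{LemFormulat} and~\ref{LemFormulau}, using the recursion $\tfrac{n-j+1}{n-j}(t_{j-1}+1)=t_{j}$ to make the products telescope, whereas you compute the exponent of each $D_{j}$ directly from the explicit formulas and collapse the interior contributions via the quadratic identities $(n-j+1)(n+j)-(n-j)(n+j+1)=2j$ and $(n-j)(n+j+1)-(n-j-1)(n+j+2)=2(j+1)$. These identities are exactly what the $t_{j}$-recursion encodes, so the two arguments are equivalent; yours is self-contained and avoids the earlier lemmas, while the paper's version makes the link to the constants $\Phi_{\mathcal{B}}$, $\Psi_{\mathcal{B}}$ in Theorem~\ref{ThmIneqMain} more transparent.
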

\begin{proof}
The quantity on the left in the first equality is in fact $\prod_{j=1}^{n-1}E_{n-j}^{t_{j-1}+1}$. Hence,
\begin{align*}
\prod_{j=1}^{n-1}E_{n-j}^{t_{j-1}+1} &= \prod_{j=1}^{n-1}\frac{D_{n-j+1}^{t_{j-1}+1}}{D_{n-j}^{\frac{n-j+1}{n-j}(t_{j-1}+1)}}=\prod_{j=1}^{n-1}\frac{D_{n-j+1}^{t_{j-1}+1}}{D_{n-j}^{t_{j}}} \\
&= \prod_{j=1}^{n-1}D_{n-j+1}\prod_{j=1}^{n-1}\frac{D_{n-(j-1)}^{t_{j-1}}}{D_{n-j}^{t_{j}}}=\prod_{j=2}^{n}D_{j}\cdot\frac{1}{D_{1}^{t_{n-1}}}.
\end{align*}

In the second equality, we deal with:
\begin{align*}
\prod_{j=1}^{n-2}\max\left\{E_{n-j-1}^{t_{j}},E_{n-j-1}^{u_{j}+1}\right\} &= \prod_{j=1}^{n-2} E_{n-j-1}^{t_{j}} \prod_{j=1}^{n-2} \max\left\{1,E_{n- j-1}^{\frac{1}{n-j}}\right\}.
\end{align*} 
For the first of the products above we can apply a similar reasoning, as in the proof of the first part. We get
\begin{align*}
\prod_{j=1}^{n-2} E_{n-j-1}^{t_{j}} &= \frac{\prod_{j=1}^{n-2} E_{n-j-1}^{t_{j}+1}}{\prod_{j=1}^{n-2} E_{n-j-1}} = \frac{\prod_{j=1}^{n-2}D_{n-j}^{t_{j}+1}/D_{n-j-1}^{\frac{n-j}{n-j-1}(t_{j}+1)}}{\prod_{j=1}^{n-2}D_{n-j}/D_{n-j-1}^{\frac{n-j}{n-j-1}}} = \prod_{j=1}^{n-2}D_{n-j-1}^{\frac{n-j}{n-j-1}}\prod_{j =1}^{n-2}\frac{D_{n-j}^{t_{j}}}{D_{n-j-1}^{t_{j+1}}} \\
&= \prod_{j=1}^{n-2}D_{n-j-1}^{\frac{n-j}{n-j-1}}\cdot\frac{D_{n-1}^{t_{1}}}{D_{1}^{t_{n-1}}}.
\end{align*}
For the second product:
\begin{align*}
\prod_{j=1}^{n-2} \max\left\{1,E_{n-j-1}^{\frac{1}{n-j}}\right\} &= \prod_{j=1}^{n-2}\max\left\{1,\frac{D_{n-j}^{\frac{1}{n-j}}}{D_{n-j-1}^{\frac{1}{n-j-1}}}\right\} =\frac{\prod_{j=1}^{n-2}\max\left\{D_{n-j-1}^{\frac{1}{n-j-1}},D_{n-j}^{\frac{1}{n-j}}\right\}}{\prod_{j=1}^{n-2}D_{n-j-1}^{\frac{1}{n-j-1}}}.
\end{align*}
Hence, finally
\begin{align*}
\prod_{j=1}^{n-2}\max\left\{E_{n-j-1}^{t_{j}},E_{n-j-1}^{u_{j}+1}\right\} &= \frac{D_{n-1}^{t_{1}}}{D_{1}^{t_{n-1}}}\prod_{j=1}^{n-2}D_{n-j-1}\prod_{j=1}^{n-2}\max\left\{D_{n-j-1}^{\frac{1}{n-j-1}},D_{n-j}^{\frac{1}{n-j}}\right\} \\
&= \frac{D_{n-1}^{t_{1}}}{D_{1}^{t_{n-1}}}\prod_{k=1}^{n-2}D_{k}\prod_{k=1}^{n-2}\max\left\{D_{k}^{\frac{1}{k}},D_{k+1}^{\frac{1}{k+1}}\right\}.\qedhere
\end{align*}
\end{proof}

\section{Proof of Theorem \ref{ThmIneq1}}

\begin{lemma}\label{LemAsympt}
When $n\to\infty$ then the following equalities are true:
\begin{enumerate}
\item $\sum_{k=1}^{n}\log k=n\log n-n+O(\log n)$,
\item $\sum_{k=1}^{n}k\log k=\frac{1}{2}n^{2}\log n-\frac{1}{4}n^{2}+O(n\log n)$,
\item $\sum_{k=1}^{n}\log\binom{n}{k}=\frac{1}{2}n^{2}+O(n\log n)$,
\item $\sum_{k=1}^{n}\frac{1}{k}\log\binom{n}{k}=O(n)$.
\end{enumerate}
\end{lemma}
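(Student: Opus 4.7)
The strategy is to treat (1) and (2) directly via Euler's summation formula (Lemma \ref{LemEuler}), then bootstrap them into (3) by a symmetric reordering of the double sum defining $\sum_{k}\log\binom{n}{k}$, and finally prove (4) from the elementary bound $\binom{n}{k}\leq (en/k)^{k}$. For (1), applying Lemma \ref{LemEuler} to $f(t)=\log t$ on $[1,n]$ yields $\int_{1}^{n}\log t\,\dt = n\log n - n + 1$, while the error integral $\int_{1}^{n}(t-\lfloor t\rfloor)t^{-1}\,\dt$ is at most $\log n$ and the boundary terms are $O(1)$. Identity (2) follows by the same mechanism with $f(t)=t\log t$ and $f'(t)=\log t+1$: the main integral evaluates to $\tfrac{1}{2}n^{2}\log n - \tfrac{1}{4}n^{2} + \tfrac{1}{4}$, and both the error integral and the boundary term at $n$ are $O(n\log n)$.

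For (3), I would exploit the symmetry $\binom{n}{k}=\binom{n}{n-k}$ to write
\[
\sum_{k=0}^{n}\log\binom{n}{k} = (n+1)\log(n!) - 2\sum_{k=0}^{n}\log(k!).
\]
Interchanging the order of summation gives $\sum_{k=0}^{n}\log(k!)=\sum_{j=1}^{n}(n-j+1)\log j = (n+1)\sum_{j=1}^{n}\log j - \sum_{j=1}^{n}j\log j$, and substituting (1) (which also supplies Stirling's estimate for $\log(n!)$) and (2) into this identity makes the $n^{2}\log n$ contributions cancel, while the surviving $n^{2}$ terms combine to $\tfrac{1}{2}n^{2}$, with all remaining errors absorbed into $O(n\log n)$. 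Since $\log\binom{n}{0}=0$, the resulting estimate also equals the sum indexed from $k=1$.

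For (4), the standard bound $\binom{n}{k}\leq (en/k)^{k}$ (which follows from $k!\geq(k/e)^{k}$) gives $\tfrac{1}{k}\log\binom{n}{k}\leq 1+\log n - \log k$; summing over $1\leq k\leq n$ and applying (1) yields $n + n\log n - (n\log n - n + O(\log n)) = 2n + O(\log n) = O(n)$, while the lower bound is trivial from $\binom{n}{k}\geq 1$. Each of the four estimates is a routine computation; the only mildly delicate point is verifying in (3) that the leading $n^{2}\log n$ contributions from $(n+1)\log(n!)$ and $2(n+1)\sum_{j}\log j$ cancel exactly, so that only the subleading $\tfrac{1}{2}n^{2}$ survives with the advertised error.
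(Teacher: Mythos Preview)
Your proof is correct. Parts (1)--(3) follow essentially the same route as the paper: Euler's summation for (1) and (2), and for (3) the identity $\sum_{k}\log\binom{n}{k}=(n{+}1)\log(n!)-2\sum_{k}\log(k!)$ followed by interchanging the order of summation and substituting (1) and (2). The only genuine difference is in (4): the paper derives it from (3) by partial summation, whereas you bypass (3) entirely and use the elementary pointwise bound $\binom{n}{k}\le(en/k)^{k}$, reducing (4) directly to (1). Your argument for (4) is slightly more transparent and self-contained---partial summation from (3) alone gives only the full sum $\sum_{k\le n}\log\binom{n}{k}$, so to make the paper's approach rigorous one still needs a bound like $\log\binom{n}{k}=O(n)$ on the individual terms to control the intermediate partial sums, which is essentially what your inequality provides anyway.
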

\begin{proof}
Equality (1) is well-known. In order to prove (2), we get from Euler's summation formula:
\begin{align*}
\sum_{k=1}^{n}k\log k &= \int_{1}^{n}t\log t\dt +O(n\log n)=\frac{1}{2}n^{2}\log n-\frac{1}{4}n^{2} + O(n\log n).
\end{align*}

In the proof of (3) we will use the previous two identities:
\begin{align*}
\sum_{k=1}^{n}\log\binom{n}{k} &= n\log(n!)-2\sum_{k=1}^{n}\log(j!)=n\sum_{k=1}^{n}\log k - 2\sum_{j=1}^{n}\sum_{k=1}^{j}\log k  \\
&= n \sum_{k=1}^{n}\log k - 2\sum_{k=1}^{n}(n-k+1)\log k = 2\sum_{k=1}^{n}k\log k - n\sum_{k=1}^{n}\log k - 2\sum_{k=1}^{n}\log k \\
&=  \left(n^{2}\log n-\frac{1}{2}n^{2}+O(n\log n)\right) - \left(n^{2}\log n-n^{2}+O(n\log n)\right) + O(n\log n) \\
&= \frac{1}{2}n^{2}+O(n\log n).
\end{align*}

Equality (4) follows from (3) by partial summation.
\end{proof}

\begin{proof}[Proof of Theorem \ref{ThmIneq1}]
Inequality $s_{n,k}(A_{k})\geq s_{n,k+1}(A_{k+1})$ is equivalent to
\begin{align*}
A_{k+1}\leq \frac{\binom{n}{k+1}}{\binom{n}{k}^{\frac{k+1}{k}}}A_{k}^{\frac{k+1}{k}}.
\end{align*}
Therefore, the first part follows directly from Theorem \ref{ThmIneqMain} and Lemma \ref{LemSimplification} used with $B_{k}=D_{k+1}/D_{k}^{\frac{k+1}{k}}$ and $D_{k}=\binom{n}{k}$. Indeed, we only need to show that
\begin{align*}
\max\left\{\binom{n}{k}^{\frac{1}{k}},\binom{n}{k+1}^{\frac{1}{k+1}}\right\}=\binom{n}{k}^{\frac{1}{k}}.
\end{align*}
This can be seen by writing:
\begin{align*}
\binom{n}{k}^{\frac{1}{k}-\frac{1}{k+1}}= \left(\frac{n(n-1)\cdots (n-k+1)}{k!}\right)^{\frac{1}{k(k+1)}}>\left(\frac{(n-k)^{k}}{(k+1)^{k}}\right)^{\frac{1}{k(k+1)}}=\left(\frac{n-k}{k+1}\right)^{\frac{1}{k+1}},
\end{align*}
and then multiplying both sides by $\binom{n}{k}^{\frac{1}{k+1}}$ and using the property $\frac{n-k}{k+1}\binom{n}{k}=\binom{n}{k+1}$.

Let us move on to the second part of the statement. By the first part we have
\begin{align*}
\log\mathcal{S}_{n}(A) &= \left(\frac{1}{2}+o_{n\to\infty}(1)\right)n^{2}\log A + \log\Phi_{n} + \log\left(1+O_{<1}\left(\frac{\Psi_{n}}{\Phi_{n}A^{2}}\right)\right).
\end{align*}
Then, thanks to Lemma \ref{LemAsympt}, we have
\begin{align*}
\log\Phi_{n} &= \log\left(\prod_{j=2}^{n}\binom{n}{j}\right)-\left(\frac{1}{2}+o_{n\to\infty}(1)\right)n^{2}\log n+o_{n\to\infty}(n^{2}) \\
&= \left(\frac{1}{2}+o_{n\to\infty}(1)\right)n^{2}-\left(\frac{1}{2}+o_{n\to\infty}(1)\right)n^{2}\log n = \left(\frac{1}{2}+o_{n\to\infty}(1)\right)n^{2}\log\left(\frac{e}{n}\right).
\end{align*}
Moreover,
\begin{align*}
\frac{\Psi_{n}}{\Phi_{n}} &= \frac{(n-1)n^{1+\frac{n}{n-1}}}{2^{n-1}}\cdot \frac{(2n)!}{(n+1)!}\prod_{k=1}^{n-2}\binom{n}{k}^{\frac{1}{k}},
\end{align*}
so
\begin{align*}
\log\left(\frac{\Psi_{n}}{\Phi_{n}A^{2}}\right) &\leq \log\left(\frac{\Psi_{n}}{\Phi_{n}}\right) = \sum_{k=1}^{n}\frac{\log\binom{n}{k}}{k} + o_{n\to\infty}(n^{2})=o_{n\to\infty}(n^{2}),
\end{align*}
and the result follows.
\end{proof}

\section{Cubic polynomials}

\begin{proof}[Proof of Theorem \ref{ThmCubic}.]
Suppose $F(X)=X^{3}-A_{1}X^{2}+A_{2}X-A_{3}$ has only real roots. In particular, its discriminant $\Delta_{f}$ is nonnegative. But
\begin{align*}
    \Delta_{f} = -27A_{3}^2 + (-4A_{1}^3 + 18A_{1}A_{2})A_{3} + (A_{1}^2A_{2}^2 - 4A_{2}^3),
\end{align*}
so, it is possible only if the discriminant of the above expression (as a polynomial in $A_{3}$) is nonnegative, that is,    $16(A_{1}^{2}-3A_{2})^{3}\geq 0$, and
\begin{align}\label{IneqCubicA3}
    \frac{1}{27}\left(9A_{1}A_{2}-2A_{1}^{3}-2(A_{1}^{2}-3A_{2})^{3/2}\right) \leq A_{3} \leq \frac{1}{27}\left(9A_{1}A_{2}-2A_{1}^{3}+2(A_{1}^{2}-3A_{2})^{3/2}\right).
\end{align}
Let $G_{-}(A_{2})$ and $G_{+}(A_{2})$ denote the expressions on the left and on the right-hand side of \eqref{IneqCubicA3}, respectively. 

We require the following:

\begin{lemma}\label{LemRobinson}
    Let $k\geq 2$, and $p(X)$ be a monic polynomial of degree $k-1$, with real zeros $\beta_{1}>\beta_{2}>\cdots >\beta_{k-1}>0$. Let $P(X)=k\int_{0}^{X}p(t)\dt$, monic of degree $k$. Then $P(X)-c$ has all zeros real and positive if and only if $(-1)^{k}c<0$ and
    \begin{align*}
        \max_{i=1,\ldots ,\lfloor k/2\rfloor} P(\beta_{2i-1}) \leq c \leq \min_{i=1,\ldots ,\lfloor (k-1)/2\rfloor} P(\beta_{2i}).
    \end{align*}
\end{lemma}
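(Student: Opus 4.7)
The plan is to exploit the fact that $P'(X)=kp(X)$, so the critical points of $P$ are exactly $\beta_1>\beta_2>\cdots>\beta_{k-1}>0$, and to read off the shape of the graph of $P$. Since $p$ is monic of degree $k-1$, we have $p(X)>0$ on $(\beta_1,\infty)$ and the sign of $p$ flips across each $\beta_j$; in particular $p$ has sign $(-1)^{k-1}$ on $(0,\beta_{k-1})$. Consequently $P$ is strictly monotonic on each of the $k$ intervals $(-\infty,\beta_{k-1})$, $(\beta_{k-1},\beta_{k-2})$, $\ldots$, $(\beta_1,\infty)$, and the critical points alternate in type: $\beta_{2i-1}$ is a local minimum of $P$ and $\beta_{2i}$ is a local maximum, giving the index ranges $i\leq \lfloor k/2\rfloor$ and $i\leq\lfloor(k-1)/2\rfloor$ respectively.

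From this graph description, the standard level-set criterion shows that $P(X)-c$ has $k$ real roots (counted with multiplicity) if and only if every local-minimum value lies at or below $c$ and every local-maximum value lies at or above $c$, that is,
\[
\max_{i=1,\ldots,\lfloor k/2\rfloor} P(\beta_{2i-1})\leq c\leq \min_{i=1,\ldots,\lfloor(k-1)/2\rfloor} P(\beta_{2i}),
\]
with exactly one root falling in each of the monotonic intervals above. The roots in $(\beta_{k-1},\beta_{k-2}),\ldots,(\beta_1,\infty)$ are automatically positive because each $\beta_j>0$; only the root in $(-\infty,\beta_{k-1})$ requires further scrutiny.

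To control this last root I would use the values $P(0)=0$ and $P'(0)=kp(0)=k(-1)^{k-1}\beta_1\cdots\beta_{k-1}\neq 0$, whose sign is $(-1)^{k-1}$. Hence on the monotonic interval $(-\infty,\beta_{k-1})$ the function $P$ is strictly increasing when $k$ is odd and strictly decreasing when $k$ is even, so its unique solution of $P(X)=c$ there is positive precisely when $c$ lies strictly on the side of $P(0)=0$ into which $P$ continues past $X=0$. Unwinding both parities collapses this into the single condition $(-1)^k c<0$; the strict inequality additionally excludes the spurious root $X=0$ that would arise at $c=0$.

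Putting the pieces together yields both directions of the equivalence: the critical-value sandwich produces $k$ real roots distributed across the monotonic intervals, and $(-1)^k c<0$ ensures the leftmost of them is positive, the others being automatically so. Conversely, if $P(X)-c$ has $k$ positive real roots, then having $k$ real roots forces the sandwich inequality, and positivity of the root in $(-\infty,\beta_{k-1})$ forces $(-1)^k c<0$. The only mildly delicate point is the boundary case where $c$ equals some $P(\beta_j)$, producing a repeated root at $\beta_j>0$ which is still positive; this is accommodated by keeping the $\leq$ in the sandwich. No step appears to pose a real obstacle — the whole argument is an attentive reading of the graph of $P$ together with a sign analysis at $X=0$.
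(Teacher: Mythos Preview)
Your argument is correct: identifying the $\beta_j$ as the critical points of $P$, determining their types from the sign pattern of $p$, invoking the level-set criterion for $k$ real roots, and then using $P(0)=0$ together with the sign of $P'(0)$ to control positivity of the leftmost root is exactly the way one proves this statement. The paper itself does not supply a proof; it simply cites Smyth's paper as the source of this formulation of Robinson's method, so your write-up in fact provides more detail than the paper does.
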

\begin{proof}
    This is in fact Robinson's method. In this form, it can be found in Smyth's \cite{Smyth2}.
\end{proof}

Let $f(X)=X^{3}-A_{1}X^{2}+A_{2}X-A_{3}$, where $A_{1},A_{2},A_{3}\geq 0$. If the zeros of $f$ are real and positive, so are the zeros of $\frac{1}{3}f'(X)=X^{2}-\frac{2}{3}A_{1}X+\frac{1}{3}A_{2}$. The latter holds if and only if $A_{1}\geq 3A_{2}$. Then we can compute the zeros of $\frac{1}{3}f'(x)$ and apply Lemma \ref{LemRobinson} with $p(X)=\frac{1}{3}f'(X)$, $P(X):=f(X)-f(0)$ and $c=A_{3}$. We find that $f(X)$ has all its zeros real and positive if and only if both $A_{3}> 0$ and \eqref{IneqCubicA3}.

The calculations show that the expression on the very left of \eqref{IneqCubicA3} is positive only if $A_2> A_1^{2}/4$. Therefore,
\begin{align*}
   \#\mathcal{P}_{3}^{+}(A) = \sum_{A_{2}\leq A^{2}/4}\ \sum_{0< A_{3}\leq G_{+}(A_{2})} 1 + \sum_{A^{2}/4<A_{2}\leq A^{2}/3}\ \sum_{G_{-}(A_{2})\leq A_{3}\leq G_{+}(A_{2})} 1,
\end{align*}
where $G_{-}(A_{2})$ and $G_{+}(A_{2})$ are the same as before.

The first range can be computed using Lemma \ref{LemEuler}.
\begin{align*}
    \sum_{A_{2}\leq A^{2}/4}\ \sum_{0\leq A_{3}\leq G_{+}(A_{2})} 1 &= \sum_{A_{2}\leq A^{2}/4} \lfloor G_{+}(A_{2})\rfloor \\
    &= \frac{A}{3}\sum_{A_{2}\leq A^{2}/4}A_{2} - \frac{2A^{3}}{27}\sum_{A_{2}\leq A^{2}/4}1 + \frac{2}{27}\sum_{0<A_{2}\leq A^{2}/4}(A^{2}-3A_{2})^{3/2} \\
    & \ \ \ + \frac{2}{27}A^{3} + O_{\leq 1}\left(\frac{1}{4}A^{2}\right) \\
    &= \frac{A}{6}\left\lfloor\frac{A^{2}}{4}\right\rfloor\left( \left\lfloor\frac{A^{2}}{4}\right\rfloor+1 \right) - \frac{2A^{3}}{27} \left\lfloor\frac{A^{2}}{4}\right\rfloor + \frac{2}{27}\int_{0}^{A^{2}/4} (A^{2}-3t)^{3/2} \dt \\
    & \ \ \ + O_{\leq 1}\left(\frac{1}{3} \int_{0}^{A^{2}/4} (A^{2}-3t)^{1/2} \dt \right) + O_{\leq 1}\left(\frac{1}{8}A^{3}\right) + \frac{2}{27}A^{3} + O_{\leq 1}\left(\frac{1}{4}A^{2}\right) \\
    &= \frac{1}{96}A^{5} + O_{\leq 1}\left(\frac{1}{8}A^{3}+\frac{1}{3}A\right) - \frac{1}{54}A^{5} +O_{\leq 1}\left(\frac{2}{27}A^{3}\right) \\
    & \ \ \ + \frac{31}{3240}A^{5} + O_{\leq 1}\left(\frac{7}{108}A^{3}+\frac{5}{54}A^{2}\right) + \frac{2}{27}A^{3} + O_{\leq 1}\left(\frac{1}{4}A^{2}\right) \\
    &= \frac{19}{12960}A^{5} + O_{\leq 1}\left(\frac{73}{216}A^{3} + \frac{37}{108}A^{2} +\frac{1}{3}A\right).
\end{align*}

Similarly, for the second range, we get
\begin{align*}
    \sum_{A^{2}/4<A_{2}\leq A^{2}/3}\ \sum_{G_{-}(A_{2})\leq A_{3}\leq G_{+}(A_{2})} 1 & = \sum_{A^{2}/4<A_{2}\leq A^{2}/3} \left(\left\lfloor G_{+}(A_{2})\right\rfloor - \left\lceil G_{-}(A_{2})\right\rceil\right) \\
    & = \frac{4}{27} \sum_{A^{2}/4<A_{2}\leq A^{2}/3} (A^{2}-3A_{2})^{3/2} + O_{\leq 1}\left(\frac{2}{3}A^{2}\right) \\
    & = \frac{4}{27}\int_{A^{2}/4}^{A^{2}/3} (A^{2}-3t)^{3/2}\dt + O_{\leq 1}\left(\frac{2}{3}\int_{A^{2}/4}^{A^{2}/3} (A^{2}-3t)^{1/2}\dt\right) \\
    & \ \ \ + O_{\leq 1}\left(\frac{1}{54}A^{3}\right)+ O_{\leq 1}\left(\frac{2}{3}A^{2}\right) \\
    & = \frac{1}{1620}A^{5} + O_{\leq 1}\left(\frac{1}{27}A^{3}+\frac{2}{3}A^{2}\right).
\end{align*}

By summing both estimates, we finally obtain:
\begin{align*}
   \#\mathcal{P}_{3}^{+}(A) & = \frac{1}{480}A^{5} + O_{\leq 1}\left(\frac{3}{8}A^{3} + \frac{109}{108}A^{2} + \frac{1}{3}A \right) \\
   & = \frac{1}{480}A^{5} + O_{\leq 1}\left(2A^{3}\right),
\end{align*}
as desired.
\end{proof}

At the end of this section we present two interesting results that use reasoning similar to that in the proof of Theorem \ref{ThmCubic}. In Corollary \ref{Cor3Coeffs} we will actually need the assumption that we count polynomials with all roots real and nonnegative. Therefore, let us introduce the following notation. For every string of positive real numbers $\underline{\alpha}:=(\alpha_{1},\ldots ,\alpha_{n})$, let
\begin{align*}
    \mathcal{P}_{n}^{0+}(A;\underline{\alpha}) := \Bigg\{ (A_{1},\ldots ,A_{n})\in\mathbb{Z}^{n}\ \Bigg|\ & A_{1}=A,\ X^{n}+\sum_{i=1}^{n} (-1)^{i}\alpha_{i}A_{i}X^{n-i} \\ 
    & \textit{ has only nonnegative real zeros}  \Bigg\}.
\end{align*}

Note that $\mathcal{P}_{3}^{0+}(A):=\mathcal{P}_{3}^{0+}(A;(1,1,1))$ counts the number of cubic polynomials with trace $A$ and all real and nonnegative roots.

We can repeat the computations from the proof of Theorem \ref{ThmCubic} and obtain the following generalisation:

\begin{theorem}\label{ThemCubicabc}
    For every triple $(\alpha,\beta,\gamma)$ of positive real numbers we have
    \begin{align*}
        \#\mathcal{P}_{3}^{+}(A;(\alpha,\beta,\gamma)) & = \frac{\alpha^{5}}{480\beta\gamma} A^{5} + O_{\leq 1}\left(\frac{\alpha^{3}}{\gamma}A^{3} + \left(\frac{1}{\beta} + \frac{1}{\gamma}\right)\alpha^{2}A^{2} + \frac{\alpha\beta}{\gamma}A \right), \\
        \#\mathcal{P}_{3}^{0+}(A;(\alpha,\beta,\gamma)) & = \frac{\alpha^{5}}{480\beta\gamma} A^{5} + O_{\leq 1}\left(\frac{\alpha^{3}}{\gamma}A^{3} + \left(\frac{2}{\beta} + \frac{1}{\gamma}\right)\alpha^{2}A^{2} + \frac{\alpha\beta}{\gamma}A \right).
    \end{align*}
\end{theorem}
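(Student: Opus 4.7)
The plan is to execute the proof of Theorem~\ref{ThmCubic} verbatim, with the polynomial $X^{3}-A_{1}X^{2}+A_{2}X-A_{3}$ replaced by $F(X):=X^{3}-\alpha A X^{2}+\beta A_{2}X-\gamma A_{3}$. Viewed as a quadratic in $A_{3}$, the discriminant of $F$ equals $16\gamma^{2}(\alpha^{2}A^{2}-3\beta A_{2})^{3}$, and Lemma~\ref{LemRobinson} applied to $\tfrac{1}{3}F'(X)=X^{2}-\tfrac{2}{3}\alpha A X+\tfrac{1}{3}\beta A_{2}$ with $c=\gamma A_{3}$ yields that $F$ has all real positive roots if and only if $A_{3}>0$, $\alpha^{2}A^{2}\geq 3\beta A_{2}$, and
\begin{align*}
\widetilde{G}_{-}(A_{2})\leq A_{3}\leq \widetilde{G}_{+}(A_{2}),\qquad \widetilde{G}_{\pm}(A_{2}):=\frac{1}{27\gamma}\Bigl(9\alpha\beta AA_{2}-2\alpha^{3}A^{3}\pm 2(\alpha^{2}A^{2}-3\beta A_{2})^{3/2}\Bigr).
\end{align*}
Evaluating at $A_{2}=\alpha^{2}A^{2}/(4\beta)$ gives $\widetilde{G}_{-}=0$, so the inequality $\widetilde{G}_{-}(A_{2})\leq 0$ is equivalent to $A_{2}\leq \alpha^{2}A^{2}/(4\beta)$. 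This triggers the same two-range split of the outer sum as in Theorem~\ref{ThmCubic}.

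Next I apply Lemma~\ref{LemEuler} just as before. The key observation is that, under the substitution $u=\alpha^{2}A^{2}-3\beta A_{2}$, every integral $\int(\alpha^{2}A^{2}-3\beta A_{2})^{j/2}\,\mathrm{d}A_{2}$ acquires a factor $\beta^{-1}$ from the Jacobian, its value at the endpoints scales as $\alpha^{j+2}A^{j+2}$, and the outer prefactor $\tfrac{1}{27\gamma}$ supplies an extra $\gamma^{-1}$. Therefore every leading and subleading term from the proof of Theorem~\ref{ThmCubic} rescales by the appropriate $\alpha^{a}\beta^{-b}\gamma^{-c}$; in particular the identity
\begin{align*}
\left(\frac{19}{12960}+\frac{1}{1620}\right)A^{5}=\frac{A^{5}}{480}
\end{align*}
rescales to the claimed main term $\frac{\alpha^{5}}{480\beta\gamma}A^{5}$.

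For the errors, the same bookkeeping shows: flooring losses inside each inner sum, summed over $O(\alpha^{2}A^{2}/\beta)$ values of $A_{2}$, produce $O_{\leq 1}(\alpha^{2}A^{2}/\beta)$; the outer-sum Euler derivative-integral error is bounded by the total variation of $\widetilde{G}_{+}$, which equals $\alpha^{3}A^{3}/(54\gamma)$; and the remaining Euler boundary terms, together with the subleading contributions from $\int(\alpha^{2}A^{2}-3\beta A_{2})^{1/2}\,\mathrm{d}A_{2}$, combine to $O_{\leq 1}(\alpha^{3}A^{3}/\gamma)+O_{\leq 1}(\alpha^{2}A^{2}/\gamma)+O_{\leq 1}(\alpha\beta A/\gamma)$. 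Collecting everything with the same slack that turned $\tfrac{371}{216}$ into $2$ in Theorem~\ref{ThmCubic} yields the stated positive-roots bound $\frac{\alpha^{3}}{\gamma}A^{3}+(\frac{1}{\beta}+\frac{1}{\gamma})\alpha^{2}A^{2}+\frac{\alpha\beta}{\gamma}A$.

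Finally, $\mathcal{P}_{3}^{0+}$ differs from $\mathcal{P}_{3}^{+}$ only by allowing $A_{3}=0$, i.e., $F(X)=X(X^{2}-\alpha A X+\beta A_{2})$, whose nonnegativity constraint becomes $0\leq A_{2}\leq \alpha^{2}A^{2}/(4\beta)$; these at most $\alpha^{2}A^{2}/(4\beta)+1$ extra tuples are absorbed by raising the coefficient of $\alpha^{2}A^{2}/\beta$ from $1$ to $2$, which is the only numerical difference between the two displayed formulas. The main obstacle is purely bookkeeping: keeping the three parameters $\alpha,\beta,\gamma$ straight through the many $O_{\leq 1}$-terms of Euler's formula and verifying they repackage into the clean three-term bound, with no new mathematical ideas beyond those of Theorem~\ref{ThmCubic} required.
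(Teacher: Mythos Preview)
Your proposal is correct and follows essentially the same route as the paper: both rerun the proof of Theorem~\ref{ThmCubic} with $A_{1}\mapsto\alpha A$, $A_{2}\mapsto\beta A_{2}$, $A_{3}\mapsto\gamma A_{3}$, obtain the same two-range decomposition governed by $\widetilde{G}_{\pm}(A_{2})=G_{\pm}(\beta A_{2})/\gamma$, and then handle $\mathcal{P}_{3}^{0+}$ by adding the $A_{3}=0$ layer, which contributes at most $\alpha^{2}A^{2}/\beta$ extra tuples. The paper in fact omits the bookkeeping you sketch (``computations are analogous \ldots\ omitted for brevity'') and records the sharper intermediate error $\frac{3\alpha^{3}}{8\gamma}A^{3}+(\frac{11}{12\beta}+\frac{5}{54\gamma})\alpha^{2}A^{2}+\frac{\alpha\beta}{3\gamma}A$ before relaxing it to the stated bound, so your more detailed tracking of the Jacobian $1/\beta$ and the $1/\gamma$ prefactor is consistent with, and slightly more explicit than, what the paper does.
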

\begin{proof}
    We follow the argument from the proof of Theorem \ref{ThmCubic}. We obtain the following: 
    \begin{align*}
        \#\mathcal{P}_{3}^{+}(A;(\alpha,\beta,\gamma)) = \sum_{A_{2}\leq \alpha^{2}A^{2}/(4\beta)}\ \sum_{0< A_{3}\leq G_{+}(\beta A_{2})/\gamma} 1 + \sum_{\alpha^{2}A^{2}/(4\beta)<A_{2}\leq \alpha^{2}A^{2}/(3\beta)}\ \sum_{G_{-}(\beta A_{2})/\gamma\leq A_{3}\leq G_{+}(\beta A_{2})/\gamma} 1,
    \end{align*}
    where in the definitions of $G_{-}(A_{2})$ and $G_{+}(A_{2})$ we replace $A_{1}$ by $\alpha A_{1}$.
    
    The remaining computations are analogous to those previously performed and are omitted for brevity. In fact, one obtains a more precise estimate of the error term:
    \begin{align*}
        \#\mathcal{P}_{3}^{+}(A;(\alpha,\beta,\gamma)) = \frac{\alpha^{5}}{480\beta\gamma} A^{5} + O_{\leq 1}\left(\frac{3\alpha^{3}}{8\gamma}A^{3} + \left(\frac{11\alpha^{2}}{12\beta} + \frac{5\alpha^{2}}{54\gamma}\right)A^{2} + \frac{\alpha\beta}{3\gamma}A \right),
    \end{align*}
    which directly implies (by putting $\alpha=\beta=\gamma=1$) the error term $O_{\leq}\left(\frac{3}{8}A^{3} + \frac{109}{108}A^{2} + \frac{1}{3}A\right)$ proved at the end of the proof of Theorem \ref{ThmCubic}.

    In order to obtain the result about $\mathcal{P}_{3}^{0+}(A;(\alpha,\beta,\gamma))$, observe that the reasoning from the proof of Theorem \ref{ThmCubic} in this case yields
    \begin{align*}
        \#\mathcal{P}_{3}^{0+}(A;(\alpha,\beta,\gamma)) & = \sum_{A_{2}\leq \alpha^{2}A^{2}/(4\beta)}\ \sum_{0\leq A_{3}\leq G_{+}(\beta A_{2})/\gamma} 1 + \sum_{\alpha^{2}A^{2}/(4\beta)<A_{2}\leq \alpha^{2}A^{2}/(3\beta)}\ \sum_{G_{-}(\beta A_{2})/\gamma\leq A_{3}\leq G_{+}(\beta A_{2})/\gamma} 1 \\
        & = \#\mathcal{P}_{3}^{+}(A;(\alpha,\beta,\gamma)) + \sum_{A_{2}\leq \alpha^{2}A^{2}/(4\beta)} 1 = \#\mathcal{P}_{3}^{+}(A;(\alpha,\beta,\gamma)) + O_{\leq 1}\left(\frac{\alpha^{2}}{\beta}A^{2}\right).
    \end{align*}
    Using the result about $\#\mathcal{P}_{3}^{+}(A;(\alpha,\beta,\gamma))$, we therefore obtain the estimate:
    \begin{align*}
        \#\mathcal{P}_{3}^{0+}(A;(\alpha,\beta,\gamma)) = \frac{\alpha^{5}}{480\beta\gamma} A^{5} + O_{\leq 1}\left(\frac{3\alpha^{3}}{8\gamma}A^{3} + \left(\frac{7\alpha^{2}}{6\beta} + \frac{5\alpha^{2}}{54\gamma}\right)A^{2} + \frac{\alpha\beta}{3\gamma}A \right).
    \end{align*}
    The result follows by further bounding the error term.
\end{proof}

\begin{cor}\label{Cor3Coeffs}
    For every $n\geq 4$:
    \begin{align*}
        \# & \left\{(A_{1},A_{2},A_{3})\Bigg|\ \exists_{A_{4},\ldots ,A_{n}}\ X^{n} +\sum_{i=1}^{n} (-1)^{i}A_{i}X^{n-i} \in\mathcal{P}_{n}^{0+}(A) \right\} \\
        & = \frac{27}{640}\left(1-\frac{1}{n}\right)^{2}\left(1-\frac{2}{n}\right) A^{5} + O_{\leq 1}\left(\frac{9}{2}A^{3} + \frac{3}{2}nA^{2} + 3A\right).
    \end{align*}
    In the case of $\mathcal{P}_{n}^{+}(A)$ instead of $\mathcal{P}_{n}^{0+}(A)$, we only have the upper bound.
\end{cor}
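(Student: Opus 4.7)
The plan is to reduce the counting problem to an application of Theorem~\ref{ThemCubicabc} by passing to a ``reduced'' cubic via Rolle's theorem, which produces a clean necessary condition on the first three coefficients that depends on nothing else.

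If $f(X)=X^{n}-A_{1}X^{n-1}+A_{2}X^{n-2}-A_{3}X^{n-3}+\cdots$ has only nonneg real zeros, then by applying Rolle's theorem $n-3$ times so does $f^{(n-3)}$. Rescaling to make it monic, the cubic
\[
\tilde g(X):=\frac{6}{n!}\,f^{(n-3)}(X)=X^{3}-\frac{3A}{n}X^{2}+\frac{6A_{2}}{n(n-1)}X-\frac{6A_{3}}{n(n-1)(n-2)}
\]
has only nonneg real zeros. Since $\tilde g$ depends only on $(A_{1},A_{2},A_{3})$, this gives a condition on the triple independent of the choice of extension $A_{4},\ldots,A_{n}$. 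Comparing to the definition of $\mathcal{P}_{3}^{0+}(A;\cdot)$ introduced just before Theorem~\ref{ThemCubicabc}, the condition reads exactly $(A,A_{2},A_{3})\in\mathcal{P}_{3}^{0+}(A;(\alpha,\beta,\gamma))$ with $\alpha=3/n$, $\beta=6/(n(n-1))$, and $\gamma=6/(n(n-1)(n-2))$. Plugging these into Theorem~\ref{ThemCubicabc} and simplifying the main term $\alpha^{5}A^{5}/(480\beta\gamma)$ together with the three error contributions $\alpha^{3}A^{3}/\gamma$, $(2/\beta+1/\gamma)\alpha^{2}A^{2}$, and $\alpha\beta A/\gamma$ yields the stated upper bound of the corollary; one checks that these three quantities are bounded respectively by $\tfrac{9}{2}A^{3}$, $\tfrac{3}{2}nA^{2}$, and $3A$.

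For the matching lower bound in the $\mathcal{P}_{n}^{0+}$-case, I would establish the converse: every integer triple $(A,A_{2},A_{3})$ whose reduced cubic $\tilde g$ has nonneg real zeros is realisable as the first three coefficients of some integer tuple in $\mathcal{P}_{n}^{0+}(A)$. The natural strategy is an iterative antidifferentiation via Robinson's method (Lemma~\ref{LemRobinson}): build the degree-$(k+1)$ polynomial from the degree-$k$ polynomial by selecting an integer constant of integration inside the admissible interval supplied by that lemma, starting with $\tilde g$ and continuing up to degree $n$. The freedom to place roots at $0$, allowed in $\mathcal{P}_{n}^{0+}$, is what lets one always pick a valid integer constant at the endpoint whenever the interior is sparse.

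The principal obstacle is precisely this converse: verifying that at every antidifferentiation an integer actually lies in the interval of admissible constants, so that the upper bound is attained triple-for-triple (with loss only in the lower-order error term). For the strictly positive set $\mathcal{P}_{n}^{+}(A)$ the zero endpoint is forbidden, so the iterative construction may stall on triples whose reduced cubic has a root very close to $0$; this is what forces the final sentence of the corollary, where only the upper bound is asserted in the $\mathcal{P}_{n}^{+}$-case.
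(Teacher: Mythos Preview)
Your reduction to the cubic $\tilde g$ by differentiating $n-3$ times and then applying Theorem~\ref{ThemCubicabc} with $(\alpha,\beta,\gamma)=\bigl(\tfrac{3}{n},\tfrac{6}{n(n-1)},\tfrac{6}{n(n-1)(n-2)}\bigr)$ matches the paper's argument exactly, including the simplification of the three error contributions.

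Where you diverge is on the converse needed for the lower bound. You outline an iterative antidifferentiation via Robinson's lemma and flag the integrality of the constant at each step as the principal obstacle. The paper does not iterate at all: it simply notes that if $g$ has only nonnegative roots, then $X^{n-3}g(X)$ is a monic degree-$n$ polynomial with all roots real and nonnegative, and from this one observation concludes the equality of the two sets of triples. This is much quicker than your route. You may notice, however, that $X^{n-3}g(X)$ carries the coefficient triple $\bigl(\tfrac{3}{n}A_{1},\tfrac{6}{n(n-1)}A_{2},\tfrac{6}{n(n-1)(n-2)}A_{3}\bigr)$, not $(A_{1},A_{2},A_{3})$, and is not in general an integer polynomial; so the paper's one-line converse does not literally exhibit an element of $\mathcal{P}_{n}^{0+}(A)$ with the prescribed first three coefficients. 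In other words, your instinct that the matching lower bound is where the real work lies is correct, and the iterative construction you sketched is the natural way to try to close that gap, even though you did not carry it through.
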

\begin{proof}
    Let
    \begin{align*}
        f(X)=X^{n}-A_{1}X^{n-1}+A_{2}X^{n-2}-\cdots+(-1)^{n-1}A_{n-1}X+(-1)^{n}A_{n}
    \end{align*}
    has only positive roots. The idea is to consider its derivatives and apply Theorem \ref{ThemCubicabc}. Indeed, observe that if $f(X)$ has only positive roots, then the same is true for its derivatives. Hence, for every $k$ we have that the following polynomial has only positive roots:
    \begin{align*}
        \frac{(n-k)!}{n!}f^{(k)}(X)=X^{n-k}-\frac{n-k}{n}A_{1}X^{n-1-k}+\frac{(n-k)(n-k-1)}{n(n-1)}A_{2}X^{n-2-k}+\cdots 
    \end{align*}    
    where in general, the coefficient of $X^{j-k}$ is $\frac{(n-k)!}{n!}\cdot\frac{j!}{(j-k)!}\cdot A_{j}$. In particular, for $k=n-3$ we get
    \begin{align*}
        \frac{6}{n!}f^{(n-3)}(X) = X^{3} - \frac{3}{n}A_{1}X^{2} + \frac{6}{n(n-1)}A_{2}X - \frac{6}{n(n-1)(n-2)}A_{3}.
    \end{align*}
    On the other hand, if $A_{1},A_{2},A_{3}$ are such that the polynomial $g(X):=X^{3} - \frac{3}{n}A_{1}X^{2} + \frac{6}{n(n-1)}A_{2}X - \frac{6}{n(n-1)(n-2)}A_{3}$ has only nonnegative roots, then $X^{n-3}g(X)$ is a polynomial of degree $n$ with all its roots real and nonnegative. Therefore, the quantity in the statement is equal to
    \begin{align*}
        \#\Bigg\{(A_{1},A_{2},A_{3})\in \mathbb{Z}^{3}\ \Bigg|\ & X^{3} - \frac{3}{n}A_{1}X^{2} + \frac{6}{n(n-1)}A_{2}X - \frac{6}{n(n-1)(n-2)}A_{3} \\
        & \textit{ has all the roots real and nonnegative},\ A_{1}=A \Bigg\}.
    \end{align*}
    The latter can be directly computed by Theorem \ref{ThemCubicabc} used with $\alpha=\frac{3}{n}$, $\beta = \frac{6}{n(n-1)}$, $\gamma = \frac{6}{n(n-1)(n-2)}$.
\end{proof}

\section{Square-free values of quadratic polynomials}

In this section, we apply the techniques in \cite{Murty}. Let $f(X)=AX^2+BX+C$, $\Delta_f=B^2-4AC$ be its discriminant, and let $\rho_f(d)$ denote the number of solutions to $f(X)\equiv 0\pmod{d}$. 

\begin{enumerate}
    \item Assume $p$ is an odd prime and $p\nmid A$. Then

    \[\rho_f(p)=\begin{cases}
0 &\;\mbox{if } \left(\dfrac{\Delta_f}{p}\right)=-1,\\
     1 &\;\mbox{if } p\mid \Delta_f,\\
    2 &\;\mbox{if }\left(\dfrac{\Delta_f}{p}\right)=1,
\end{cases}\]
where $\left(\frac{\cdot}{\cdot}\right)$ is the Legendre symbol. Now, lifting the solutions modulo $p^2$: 

Clearly, there are no solutions if $\rho_f(p)=0$. If $\left(\dfrac{\Delta_f}{p}\right)=1$, then the two simple roots modulo $p$ can be lifted by Hensel's lemma, so that $\rho_f(p^2)=2$. Finally, we distinguish two cases for $p\mid \Delta_f$:
\begin{itemize}
    \item If $\Delta_f\not\equiv 0\pmod{p^2}$, then the double root $X$ modulo $p$ is the root of the derivative $2AX+B\equiv0\pmod{p}$. Substituting $ -\dfrac{B}{2A}$ for $X$ in
    \[AX^2+BX+C=-\frac{\Delta_f}{4A}\not\equiv 0\pmod{p^2},\] 
    hence, $\rho_f(p^2)=0$.
    \item If $p^2\mid \Delta_f$, then by an argument similar to that above, every lift of $X$ is a root modulo $p^2$, so $\rho_f(p^2)=p$.
\end{itemize}
In summary, when $p\nmid A$ and $p$ is odd, then
\[\rho_f(p^2)=\begin{cases}
 0 &\;\mbox{if } \left(\dfrac{\Delta_f}{p}\right)=-1\mbox{ or }p\mid\mid \Delta_f,\\
 2 &\;\mbox{if } \left(\dfrac{\Delta_f}{p}\right)=1,\\
    p &\;\mbox{if }p^2\mid \Delta_f.
\end{cases}\]
\item Assume that $p$ is an odd prime number, and $p\mid A$. Then we have a linear (or a constant) polynomial modulo $p$, and clearly
\[\rho_f(p)=\begin{cases}
0 &\;\mbox{if } p\mid B \mbox{ and }p\nmid{C},\\
     1 &\;\mbox{if }p\nmid B,\\
    p &\;\mbox{if }p\mid \gcd(B,C).
\end{cases}\]
For lifts modulo $p^2$: 
\begin{itemize}
    \item If $p\nmid \Delta_f$, then $p\nmid B$. Hence, by Hensel's lemma we have a unique solution, so $\rho_f(p^2)=1$.
    \item If $p\mid \Delta_f$ but $p^2\nmid \Delta_f$, then from the discriminant we get $p\mid B$, but $p \nmid C$. Therefore, $\rho_f(p^2)=0$ (since we do not have a solution even modulo $p$).
    \item  If $p\mid \gcd(A,B,C)$, then we can consider $g=f/p$ and $p\cdot\rho_g(p)=\rho_f(p^2)$, for $\rho_g(p)\in \{0,1,2,p\}$.
\end{itemize}
In summary, for odd prime $p$ such that $p\mid A$: 
\[\rho_f(p^2)=\begin{cases}
0 &\;\mbox{if } p\mid\mid \Delta_f \mbox{ or }\rho_f(p)=0,\\
     1 &\;\mbox{if } p\nmid \Delta_f,\\
    p\cdot\rho_{f/p}(p)  &\;\mbox{if }p\mid \gcd(B,C).
\end{cases}\]
\item If $p=2$, then we have $\rho_f(4)=0,1,2$ or $4$.
\end{enumerate}

Since $\rho_{f}(d)$ is multiplicative, these considerations immediately imply the following result similar to Lemma 2 in \cite{Murty}:

\begin{lemma}\label{Lemrho(d2)}
    Let $f=AX^2+BX+C\in\Z[X]$ such that $\Delta_f\not=0$, and let $d$ be a square-free odd integer such that $\gcd(A,d)=1$. Then $\rho_f(d^2)\le 2^{\omega(d)}\mathrm{rad}(\Delta_f)$, where $\omega(d)$ denotes the number of distinct prime factors of $d$, and the radical of $\Delta_f$ is denoted by $\mathrm{rad}(\Delta_f)$.
\end{lemma}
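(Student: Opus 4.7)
The plan is to reduce the claim to the prime-power analysis that precedes the statement, combined with the multiplicativity of $\rho_{f}$. Since $d$ is square-free, we may write $d=p_{1}\cdots p_{r}$ for distinct odd primes $p_{i}$, where $r=\omega(d)$. By the Chinese Remainder Theorem, counting roots modulo $d^{2}=p_{1}^{2}\cdots p_{r}^{2}$ decomposes prime-by-prime, so
\begin{align*}
\rho_{f}(d^{2})=\prod_{i=1}^{r}\rho_{f}(p_{i}^{2}).
\end{align*}

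Next, for each prime $p=p_{i}$ dividing $d$ we apply the case analysis in part (1) above: the hypothesis $\gcd(A,d)=1$ ensures $p\nmid A$, and $p$ is odd by assumption, so exactly one of the three listed scenarios occurs. I would split the primes dividing $d$ into two sets according to whether $p\mid\Delta_{f}$ or not. If $p\nmid\Delta_{f}$ then either $\left(\frac{\Delta_{f}}{p}\right)=-1$ (giving $\rho_{f}(p^{2})=0$) or $\left(\frac{\Delta_{f}}{p}\right)=1$ (giving $\rho_{f}(p^{2})=2$); in either case $\rho_{f}(p^{2})\le 2$. If instead $p\mid\Delta_{f}$, then either $p\mid\mid\Delta_{f}$ (giving $\rho_{f}(p^{2})=0$) or $p^{2}\mid\Delta_{f}$ (giving $\rho_{f}(p^{2})=p$), so in any case $\rho_{f}(p^{2})\le p$.

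Putting the two estimates together,
\begin{align*}
\rho_{f}(d^{2})\;\le\;\prod_{\substack{p\mid d\\ p\nmid\Delta_{f}}}2\;\cdot\;\prod_{\substack{p\mid d\\ p\mid\Delta_{f}}}p\;\le\;2^{\omega(d)}\prod_{\substack{p\mid d\\ p\mid\Delta_{f}}}p.
\end{align*}
The final product runs over distinct primes dividing both $d$ and $\Delta_{f}$, so it divides $\rad(\Delta_{f})$ (which by assumption is nonzero since $\Delta_{f}\neq 0$), and hence is bounded above by $\rad(\Delta_{f})$. This yields $\rho_{f}(d^{2})\le 2^{\omega(d)}\rad(\Delta_{f})$, as desired.

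The argument is essentially bookkeeping once the prime-power case analysis is in hand; the only step requiring mild care is verifying that the hypothesis $\gcd(A,d)=1$ together with odd $d$ places every prime divisor of $d$ squarely in Case (1), so that the Case (2)/Case (3) bounds (which could be larger) never need to be invoked.
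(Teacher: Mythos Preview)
Your proof is correct and follows essentially the same approach as the paper, which simply remarks that the lemma follows immediately from the multiplicativity of $\rho_{f}$ together with the preceding prime-power case analysis. You have supplied the details that the paper omits, and the bookkeeping is all in order.
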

If the discriminant above is zero, then $f$ is a square of a linear polynomial, and $\rho_f(d^2)\le d$.

Denote 
\begin{align*}
    S(x,y;f) & := \#\big\{ x<n\leq y\ \big|\ f(n) \textrm{ is square-free}  \big\}, \\
    N_{p}(x,y;f) & := \#\big\{ x<n\leq y\ |\ p^{2} \textrm{ divides } f(n) \big\} .
\end{align*}

\begin{theorem}\label{ThmSquarefreeQuadratic}
    Let $f(X)=AX^{2}+BX+C$ satisfy $\Delta_{f}>0$. If $x<y$ and $z<y$, then
    \begin{align*}
        S(x,y;f) & \geq (y-x) \prod_{\substack{p\leq z \\ p\mid \Delta_{f}}} \left(1 - \frac{1}{p}\right)  \prod_{\substack{p\leq z \\ p\mid\gcd (A,B,C)}} \left(1 - \frac{\rho_{f} (p^{2})}{p^{2}}\right)  \prod_{p\in\mathbb{P}} \left(1 - \frac{2}{p^{2}}\right)  \\
        & \ \ \ \  - \rad(\Delta_{f})\cdot 3^{\pi (z)} - (y-x)\left( \sum_{\substack{z<p\leq F \\ p\mid\gcd (A,B,C) }} \frac{\rho_{f}(p^{2})}{p^{2}} + \frac{\omega (\Delta_{f})}{z} + \frac{4}{z} \right) \\
        & \ \ \ \ -\left(\sum_{\substack{z<p\leq F \\ p\mid\gcd (A,B,C)}}\rho_{f}(p^{2}) + \sum_{\substack{z<p\leq F \\ p\nmid\gcd (A,B,C) \\ p\mid \Delta_{f}}} p + 2(\pi (F) - \pi (z))\right),
    \end{align*}
    where $F:=\max_{x<n\leq y} |f(n)|^{1/2}$, and
    \begin{align*}
        S(x,y;f) & \leq (y-x)\prod_{\substack{p\leq z \\ p\mid \Delta_{f}}} \left(1 - \frac{1}{p}\right) \prod_{\substack{p\leq z \\ p\mid\gcd (A,B,C)}} \left(1 - \frac{\rho_{f} (p^{2})}{p^{2}}\right) + \rad(\Delta_{f})\cdot 3^{\pi (z)}.
    \end{align*}
\end{theorem}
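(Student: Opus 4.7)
The plan is to combine the M\"obius indicator for square-freeness, $\mathbf{1}_{\mathrm{sqfr}}(m)=\sum_{d^{2}\mid m}\mu(d)$, with a sieve truncation at level $z$. For $z<y$, let $S_{z}(x,y;f)$ count those $n\in(x,y]$ for which no prime $p\leq z$ satisfies $p^{2}\mid f(n)$. Trivially $S(x,y;f)\leq S_{z}(x,y;f)$, while $S_{z}(x,y;f)-S(x,y;f)\leq \sum_{z<p\leq F} N_{p}(x,y;f)$, since no prime $p>F=\max_{x<n\leq y}|f(n)|^{1/2}$ can satisfy $p^{2}\mid f(n)$. This reduces the theorem to estimating $S_{z}(x,y;f)$ and bounding the tail sum.

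By inclusion--exclusion, $S_{z}(x,y;f)=\sum_{d}\mu(d)N_{d}(x,y;f)$, over square-free $d$ whose prime factors are all $\leq z$. Writing $N_{d}(x,y;f)=\rho_{f}(d^{2})(y-x)/d^{2}+O_{\leq 1}(\rho_{f}(d^{2}))$ and using multiplicativity of $\rho_{f}$, the main term factors as $(y-x)\prod_{p\leq z}(1-\rho_{f}(p^{2})/p^{2})$. The case analysis (1)--(3) preceding Lemma~\ref{Lemrho(d2)} now bounds each factor: for $p\mid \Delta_{f}$ with $p\nmid A$ one has $\rho_{f}(p^{2})\in\{0,p\}$, so the factor is $\geq 1-1/p$; for $p\mid \gcd(A,B,C)$ the exact factor $(1-\rho_{f}(p^{2})/p^{2})$ is retained; and for all remaining ``generic'' primes one has $\rho_{f}(p^{2})\leq 2$, so the factor is $\geq 1-2/p^{2}$. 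Extending the last product from $p\leq z$ to all primes via $\prod_{p}(1-2/p^{2})$ introduces only a convergent tail factor and produces the main term of the lower bound; the upper bound is obtained by keeping only the two distinguished products and bounding the remaining factors trivially by $1$.

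For the error $\sum_{d}\rho_{f}(d^{2})$ in the main sum, apply Lemma~\ref{Lemrho(d2)} together with the separate treatment for $p\mid A$ and for $p=2$, giving $\rho_{f}(d^{2})\leq 2^{\omega(d)}\rad(\Delta_{f})$. Then $\sum_{d}2^{\omega(d)}\leq 3^{\pi(z)}$ over square-free $d$ with prime factors $\leq z$, since each of the $\pi(z)$ primes either is absent or contributes a factor of $2$. For the tail $\sum_{z<p\leq F}N_{p}(x,y;f)$ I split again by prime type. Generic primes ($\rho_{f}(p^{2})\leq 2$) contribute $(y-x)\sum_{z<p}2/p^{2}+2(\pi(F)-\pi(z))$, and the bound $\sum_{n>z}1/n^{2}<2/z$ turns the first piece into the $4(y-x)/z$ term. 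Primes with $p\mid \Delta_{f}$ and $p\nmid \gcd(A,B,C)$ satisfy $\rho_{f}(p^{2})/p^{2}\leq 1/z$ and there are at most $\omega(\Delta_{f})$ of them, producing the $(y-x)\omega(\Delta_{f})/z$ term together with the $\sum p$ error. Finally, primes dividing $\gcd(A,B,C)$ yield the explicit remaining sums $(y-x)\sum \rho_{f}(p^{2})/p^{2}$ and $\sum \rho_{f}(p^{2})$ that appear in the statement.

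The hardest part will be the careful accounting of cases. Lemma~\ref{Lemrho(d2)} applies only when $d$ is odd and $\gcd(A,d)=1$, so primes dividing $A$ and the prime $p=2$ require separate treatment, and their contributions have to be folded consistently into both the main-term product and the several pieces of the tail sum. Once each of the three prime categories is tracked through the product over $p\leq z$ and the tail over $z<p\leq F$, the assembled pieces recover the stated bounds exactly.
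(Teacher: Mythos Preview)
Your proposal is correct and follows essentially the same route as the paper's proof: split at level $z$, compute the sifted quantity $S_{z}(x,y;f)$ by inclusion--exclusion over $d\mid P(z)$, factor the resulting product $\prod_{p\leq z}(1-\rho_{f}(p^{2})/p^{2})$ according to the trichotomy $p\mid\gcd(A,B,C)$, $p\mid\Delta_{f}$, generic, bound the error $\sum_{d\mid P(z)}\rho_{f}(d^{2})$ via Lemma~\ref{Lemrho(d2)}, and handle the tail $\sum_{z<p\leq F}N_{p}$ by the same case split. The paper does not introduce $S_{z}$ by name but manipulates the M\"obius sum directly; the computations are otherwise identical, including the estimate $\sum_{n>z}n^{-2}<2/z$ and the identity $\sum_{d\mid P(z)}2^{\omega(d)}=3^{\pi(z)}$.
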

\begin{proof}
    For $z>1$ denote
    \begin{align*}
        P(z) := \prod_{p\leq z}p.
    \end{align*}
    
    Let $F:=\max_{x<n\leq y} |f(n)|^{1/2}$. Since $\mu(n)^{2} = \sum_{d^{2}\mid n} \mu(d)$, we have
    \begin{align*}
        S(x,y;f) & = \sum_{x<n\leq y} \mu (f(n))^{2} = \sum_{x<n\leq y}\ \sum_{d^{2}\mid f(n)} \mu(d)  \\
        & = \sum_{x<n\leq y}\ \sum_{d^{2}\mid (f(n),P(z)^{2})} \mu(d) + \sum_{x<n\leq y}\ \sum_{\substack{d^{2}\mid f(n) \\ \exists_{p>z}\ p\mid d}} \mu(d) \\
        & = \sum_{x<n\leq y}\ \sum_{d^{2}\mid (f(n),P(z)^{2})} \mu(d) + \sum_{p>z}\ \sum_{\substack{x<n\leq y \\ p^{2}\mid f(n)}}\ \sum_{\substack{p^{2}d^{2}\mid f(n)}} \mu(pd) \\
        & = \sum_{x<n\leq y}\ \sum_{d^{2}\mid (f(n),P(z)^{2})} \mu(d) - \sum_{z<p\leq F}\ \sum_{\substack{x<n\leq y \\ p^{2}\mid f(n)}}\ \sum_{\substack{d^{2}\mid \frac{f(n)}{p^{2}}}} \mu(d) \\ 
        & = \sum_{x<n\leq y}\ \sum_{d^{2}\mid (f(n),P(z)^{2})} \mu(d) - \sum_{z<p\leq F}\ \sum_{\substack{x<n\leq y \\ p^{2}\mid f(n)}} \left|\mu\left(\frac{f(n)}{p^{2}}\right)\right| \\
        & \geq \sum_{x<n\leq y}\ \sum_{d^{2}\mid (f(n),P(z)^{2})} \mu(d) - \sum_{z<p\leq F} N_{p}(x,y;f).
    \end{align*}

    Now we need to find an upper bound for $N_{p}(z,y;f)$. Denote $x_{p}:=\left\lfloor\frac{x}{p^{2}}\right\rfloor$ and $y_{p}:=\left\lceil\frac{y}{p^{2}}\right\rceil$. Then:
    \begin{align*}
        N_{p}(x,y;f) & = \#\big\{ x<n\leq y | f(n)\equiv 0 \pmod{p^{2}} \big\} \\
        & \leq \#\big\{ p^{2}x_{p}\leq n\leq p^{2}y_{p}-1|f(n)\equiv 0 \pmod{p^{2}} \big\} \\
        & = (y_{p}-x_{p}) \#\big\{ 0\leq n\leq p^{2}-1| f(n)\equiv 0 \pmod{p^{2}} \big\} \\
        & = (y_{p}-x_{p}) \rho_{f}\left(p^{2}\right) \leq \left(\frac{y-x}{p^{2}} + 2\right)\rho_{f}\left(p^{2}\right).
    \end{align*}

    Therefore,
    \begin{align*}
        \sum_{z<p\leq F} N_{p}(x,y;f) & \leq (y-x)\sum_{z<p\leq F} \frac{\rho_{f}(p^{2})}{p^{2}} + 2\sum_{z<p\leq F}\rho_{f}(p^{2}).
    \end{align*}

    We bound the sums above separately. Begin with the  first one:
    \begin{align*}
        \sum_{z<p\leq F} \frac{\rho_{f}(p^{2})}{p^{2}} & \leq \sum_{\substack{z<p\leq F \\ p\mid\gcd (A,B,C) }} \frac{\rho_{f}(p^{2})}{p^{2}} + \sum_{\substack{z<p\leq F \\ p\nmid\gcd (A,B,C) \\ p\mid \Delta_{f}}} \frac{1}{p} + \sum_{\substack{z<p\leq F \\ p\nmid\gcd (A,B,C) \\ p\nmid \Delta_{f}}} \frac{2}{p^{2}} \\
        & \leq \sum_{\substack{z<p\leq F \\ p\mid\gcd (A,B,C) }} \frac{\rho_{f}(p^{2})}{p^{2}} + \frac{\omega (\Delta_{f})}{z} + 2\sum_{\substack{z<p\leq F \\ p\nmid\gcd (A,B,C) \\ p\nmid \Delta_{f}}} \frac{1}{p^{2}}.
    \end{align*}
    The function $1/x^{2}$ is monotonically decreasing, so for $Z>F$ we have
    \begin{align*}
        \sum_{\substack{z<p\leq F \\ p\nmid\gcd (A,B,C) \\ p\nmid \Delta_{f}}} \frac{1}{p^{2}} & < \sum_{z<n \leq Z} \frac{1}{n^{2}} \leq \frac{1}{z^{2}} + \int_{z}^{Z}\frac{\dt}{t^{2}} = \frac{1}{z^{2}} + \frac{1}{z} - \frac{1}{Z} < \frac{2}{z}. 
    \end{align*}
    Hence,
    \begin{align*}
        \sum_{z<p\leq F} \frac{\rho_{f}(p^{2})}{p^{2}} & \leq \sum_{\substack{z<p\leq F \\ p\mid\gcd (A,B,C) }} \frac{\rho_{f}(p^{2})}{p^{2}} + \frac{\omega (\Delta_{f})}{z} + \frac{4}{z}.
    \end{align*}

    For the second sum we have:
    \begin{align*}
        \sum_{z<p\leq F}\rho_{f}(p^{2}) & \leq \sum_{\substack{z<p\leq F \\ p\mid\gcd (A,B,C)}}\rho_{f}(p^{2}) + \sum_{\substack{z<p\leq F \\ p\nmid\gcd (A,B,C) \\ p\mid \Delta_{f}}} p + 2(\pi (F) - \pi (z))
    \end{align*}
    
    Therefore, we obtained the bound for the sum $\sum_{z<p\leq F} N_{p}(x,y;f)$. It remains to deal with the main term:
    \begin{align*}
        \sum_{x<n\leq y}\ \sum_{d^{2}\mid (f(n),P(z)^{2})} \mu(d) & = \sum_{p\mid P(z)} \mu(d) \sum_{\substack{ x<n\leq y \\ f(n)\equiv 0\pmod{d^{2}} }} 1 \\
        & = \sum_{d\mid P(z)} \mu(d) \left(\frac{\rho_{f}(d^{2})}{d^{2}}(y-x) + O_{\leq 1}\left(\rho_{f}(d^{2})\right)\right) \\
        & = (y-x)\sum_{d\mid P(z)}\frac{\mu (d) \rho_{g}(d^{2})}{d^{2}} + O_{\leq 1}\left(\sum_{d\mid P(z)}\rho_{f}(d^{2})\right) \\
        & = (y-x) \prod_{p\leq z} \left(1 - \frac{\rho_{f} (p^{2})}{p^{2}}\right) + O_{\leq 1}\left(\sum_{d\mid P(z)}\rho_{f}(d^{2})\right).
    \end{align*}

    In order to simplify the product, we use previously obtained formulas for $\rho_{f}(p^{2})$:
    \begin{align*}
        \prod_{p\leq z} \left(1 - \frac{\rho_{f} (p^{2})}{p^{2}}\right) & = \prod_{\substack{p\leq z \\ p\mid\gcd (A,B,C)}} \left(1 - \frac{\rho_{f} (p^{2})}{p^{2}}\right) \cdot \prod_{\substack{p\leq z \\ p\nmid \Delta_{f}}} \left(1 - \frac{\rho_{f} (p^{2})}{p^{2}}\right) \cdot \prod_{\substack{p\leq z \\ p\mid \Delta_{f}}} \left(1 - \frac{p}{p^{2}}\right) \\
        & \geq \prod_{\substack{p\leq z \\ p\mid\gcd (A,B,C)}} \left(1 - \frac{\rho_{f} (p^{2})}{p^{2}}\right) \cdot \prod_{\substack{p\leq z \\ p\nmid \Delta_{f}}} \left(1 - \frac{2}{p^{2}}\right) \cdot \prod_{\substack{p\leq z \\ p\mid \Delta_{f}}} \left(1 - \frac{1}{p}\right) \\
        & \geq \prod_{\substack{p\leq z \\ p\mid \Delta_{f}}} \left(1 - \frac{1}{p}\right) \cdot \prod_{\substack{p\leq z \\ p\mid\gcd (A,B,C)}} \left(1 - \frac{\rho_{f} (p^{2})}{p^{2}}\right) \cdot \prod_{p\in\mathbb{P}} \left(1 - \frac{2}{p^{2}}\right) .
    \end{align*}

    The bound for the $O$-term follows easily from Lemma \ref{Lemrho(d2)}:
    \begin{align*}
        \sum_{d\mid P(z)}\rho_{f}(d^{2}) & \leq \rad (\Delta_{f}) \sum_{d\mid P(z)} 2^{\omega (d)} = \rad (\Delta_{f}) \cdot 3^{\pi (z)}.
    \end{align*}
    Taking everything together we obtain the lower bound. Proof of the upper bound is analogous but easier.
\end{proof}

\begin{lemma}\label{LemOmegaUpperBound}
    Let $\omega (n)$ denote the number of distinct prime divisors of $n$.
    \begin{enumerate}
        \item $\omega (n)<2\log n$,
        \item if $n>2$ then $\omega (n) < 3\frac{\log n}{\log\log n}$.
    \end{enumerate}
\end{lemma}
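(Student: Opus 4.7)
Both parts rest on the same basic idea: bounding $n$ from below in terms of $k := \omega(n)$ via a primorial-type inequality. Write $n = p_1^{a_1}\cdots p_k^{a_k}$ with distinct primes $p_1 < p_2 < \cdots < p_k$, so $n \geq p_1 p_2\cdots p_k$.

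For (1), one uses only the trivial $p_i \geq 2$, whence $n \geq 2^k$ and therefore $\omega(n) \leq \log n/\log 2 < 2\log n$, the last inequality because $1/\log 2 < 1.5 < 2$. For (2), the plan is to sharpen the primorial bound to $p_i \geq i+1$ for every $i \geq 1$: among the integers in $\{1,2,\ldots,i\}$ at most $i-1$ are prime (since $1$ is excluded), so the $i$-th prime exceeds $i$. This upgrades the estimate to
\begin{align*}
n \geq \prod_{i=1}^{k}(i+1) = (k+1)!,
\end{align*}
and an elementary Stirling-type bound $\log m! \geq m\log m - m$ then yields $\log n \geq (k+1)\bigl(\log(k+1) - 1\bigr)$.

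From here I would deduce $k\log\log n < 3\log n$ by the following analytic argument. Set $L := \log n$ and $L_k := (k+1)(\log(k+1)-1)$, so $L \geq L_k$. The function $\varphi(L) := 3L - k\log L$ has derivative $3 - k/L$, so $\varphi$ is increasing on $(k/3,\infty)$; a short check shows $L_k > k/3$ for every $k \geq 3$, so it suffices to verify the single inequality $3L_k > k\log L_k$. This reduces to a one-variable statement in $k$ that is confirmed by direct substitution for $k \in \{3,4,5\}$ and by the leading-order comparison $3L_k \sim 3k\log k$ versus $k\log L_k \sim k\log k$ for $k \geq 6$, which leaves a gap of order $2k\log k$. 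The degenerate cases $k \in \{1,2\}$ are handled separately: the hypothesis $n > 2$ forces $\log n > 1$ and hence $\log\log n > 0$, and one checks directly that $3\log n/\log\log n$ is already far above $2$ for every $n \geq 3$ (the worst cases are numerical checks at $n = 3, 4, 6$, where the right-hand side is at least $9$).

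The principal obstacle is that the primorial lower bound on $\log n$ degrades badly for small $k$ (it is even negative for $k=1$), so the transition between the asymptotic regime—where there is enormous slack—and the small-$k$ regime, where numerical verification is required, has to be stitched together with a little care. A more conceptual but less self-contained alternative would be to replace $(k+1)!$ by a Chebyshev-type bound $\log n \geq \theta(p_k) \geq c\,k\log k$, collapsing the whole argument into one clean asymptotic inequality; I expect, however, that the authors prefer the entirely elementary $(k+1)!$ route, at the price of the little case analysis just sketched.
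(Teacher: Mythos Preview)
Your proof of part (1) is correct and essentially identical to the paper's: both use $n\geq 2^{\omega(n)}$ to get $\omega(n)\leq (\log n)/\log 2<2\log n$.

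For part (2), however, the paper does not take the elementary $(k+1)!$ route you anticipate. It simply invokes Robin's explicit bound \cite[Th\'{e}or\`{e}me 11]{Robin}, which already gives $\omega(n)<1.3841\,\log n/\log\log n$ for all $n\geq 3$, a fortiori the stated inequality with constant $3$. So the paper's proof of (2) is a one-line citation.

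Your self-contained argument for (2) is a genuinely different approach and is sound in outline: the primorial bound $n\geq (k+1)!$, the Stirling-type inequality $\log n\geq (k+1)(\log(k+1)-1)=:L_k$, and the monotonicity reduction to $3L_k>k\log L_k$ all work. The one soft spot is the phrase ``leading-order comparison for $k\geq 6$'': as written this is an asymptotic heuristic, not a proof, and the lower-order terms (the $-3(k+1)$ from Stirling and the $k\log\log(k+1)$ from $\log L_k$) are not negligible for moderate $k$. It can be made rigorous---for instance by bounding $L_k\leq (k+1)\log(k+1)$, so that $3L_k-k\log L_k\geq (2k+3)\log(k+1)-3(k+1)-k\log\log(k+1)$, and then checking that this last expression is positive at $k=6$ and has positive derivative---but that calculation needs to be written out, not just asserted. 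Your treatment of $k\in\{1,2\}$ is fine. In short: your route trades a literature citation for an elementary but somewhat fiddly case analysis; both are valid, and your expectation about what the authors do turned out to be the opposite of what they actually do.
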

\begin{proof}
    \begin{enumerate}
        \item If $n=p^{v_{1}}\cdots p_{\omega(n)}^{v_{\omega (n)}}$, then
        \begin{align*}
            \log n =\sum_{i=1}^{\omega (n)} v_{i}\log p_{i} \geq \omega (n)\log 2 > \frac{\omega (n)}{2}.
        \end{align*}
        \item Follows from Robin's bound \cite[Th\'{e}or\`{e}me 11]{Robin}.\qedhere
    \end{enumerate}
\end{proof}

\begin{lemma}\label{LemPhi}
    We have for every $N$:
    \begin{align*}
        \sum_{\substack{1\leq n\leq N \\ n\equiv 0~(\mathrm{mod}~3)}} \varphi (n) & = \frac{3}{4\pi^{2}}N^{2} + O(N\log N), \\
        \sum_{\substack{1\leq n\leq N \\ n\equiv 1~(\mathrm{mod}~3)}} \varphi (n) & = \frac{9}{8\pi^{2}}N^{2} + O(N\log N).
    \end{align*}
\end{lemma}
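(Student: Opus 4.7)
The plan is to build on the classical asymptotic
\begin{align*}
U(N) := \sum_{n\leq N}\varphi(n) = \frac{3}{\pi^{2}}N^{2} + O(N\log N)
\end{align*}
(standard, from $\varphi=\mu\ast\mathrm{Id}$) and to handle the two congruence classes by separate tricks: a self-referential recursion for the sum over multiples of $3$, and Dirichlet characters modulo $3$ for the class $1\pmod{3}$.

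For $V(N):=\sum_{n\leq N,\ 3\mid n}\varphi(n)$, I would substitute $n=3m$ and use the multiplicativity of $\varphi$: one has $\varphi(3m)=2\varphi(m)$ when $\gcd(m,3)=1$ and $\varphi(3m)=3\varphi(m)$ when $3\mid m$. Splitting the range $m\leq N/3$ into these two subclasses yields the recurrence $V(N) = 2\,U(N/3)+V(N/3)$. Iterating $\lfloor\log_{3}N\rfloor$ times (after which the argument becomes trivial) and inserting the classical estimate for $U$ gives
\begin{align*}
V(N) = 2\sum_{k\geq 1} U(N/3^{k}) + O(1) = \frac{6}{\pi^{2}}N^{2}\cdot\frac{1/9}{1-1/9}+O(N\log N) = \frac{3}{4\pi^{2}}N^{2}+O(N\log N),
\end{align*}
which is the first asymptotic; the error term stays $O(N\log N)$ thanks to the convergence of $\sum_{k\geq 1}3^{-k}$.

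For $T_{1}(N):=\sum_{n\leq N,\ n\equiv 1\,(3)}\varphi(n)$, I would introduce the principal character $\chi_{0}$ and the unique non-trivial character $\chi$ modulo $3$. The identity $\mathbf{1}_{n\equiv 1\,(3)}=\tfrac{1}{2}(\chi_{0}(n)+\chi(n))$ splits
\begin{align*}
T_{1}(N) = \tfrac{1}{2}\bigl(U(N)-V(N)\bigr) + \tfrac{1}{2}B(N),\qquad B(N):=\sum_{n\leq N}\chi(n)\varphi(n),
\end{align*}
where the first summand is already $\tfrac{1}{2}\cdot\tfrac{9}{4\pi^{2}}N^{2}+O(N\log N)$ by the two previous estimates. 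Thus the entire task collapses to showing $B(N)=O(N\log N)$.

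The bound on $B(N)$ is the one place that genuinely requires cancellation. Applying $\varphi=\mu\ast\mathrm{Id}$ and exploiting complete multiplicativity of $\chi$, I would rewrite $B(N)=\sum_{d\leq N}\chi(d)\mu(d)\,S(N/d)$ with $S(x):=\sum_{m\leq x}\chi(m)m$. The key observation is that the character sum $M(x):=\sum_{m\leq x}\chi(m)$ is bounded by $1$, because $\chi$ has period $3$ and mean zero; Abel summation therefore yields $|S(x)|\leq 2x$. Substituting back, $|B(N)|\leq 2\sum_{d\leq N}N/d = O(N\log N)$, which closes out the proof. The main obstacle is exactly this step: the trivial estimate $|\chi(n)\varphi(n)|\leq n$ only gives $O(N^{2})$, so one must squeeze the savings out of the non-principal character via the bounded $M(x)$; the rest is book-keeping of geometric and harmonic series.
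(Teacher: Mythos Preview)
Your argument is correct, and it is genuinely different from the paper's. The paper proceeds by inserting $\varphi(n)=\sum_{d\mid n}\mu(d)\,\frac{n}{d}$ and directly splitting the Dirichlet convolution according to the residue of $d$ modulo $3$; the key input there is the evaluation $\sum_{3\nmid d}\mu(d)/d^{2}=\frac{27}{4\pi^{2}}$, and both asymptotics fall out of the same explicit double-sum computation. You instead treat the two classes by separate devices: a self-similar recursion $V(N)=2U(N/3)+V(N/3)$ (exploiting $\varphi(3m)\in\{2\varphi(m),3\varphi(m)\}$) for the multiples of $3$, and orthogonality of the two Dirichlet characters mod $3$ for the class $1\pmod 3$, reducing everything to the classical estimate for $U(N)$ together with the cancellation bound $\bigl|\sum_{m\le x}\chi(m)\,m\bigr|\le 2x$ obtained from $|M(x)|\le 1$ via Abel summation. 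Your route is more modular---it black-boxes $U(N)$ and isolates the only nontrivial step (cancellation from the nonprincipal character) cleanly---whereas the paper's computation is more self-contained and yields both residue classes by the same mechanism without invoking characters. Both give the same error $O(N\log N)$.
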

\begin{proof}
    At first, note that
    \begin{align*}
        \sum_{\substack{1\leq d\leq \infty \\ 3\nmid d}} \frac{\mu (d)}{d^{2}} = \prod_{p\neq 3}\left(1-\frac{1}{p^{2}}\right) = \frac{9}{8}\prod_{p}\left(1-\frac{1}{p^{2}}\right) = \frac{9}{8} \cdot\frac{6}{\pi^{2}} = \frac{27}{4\pi^{2}}.
    \end{align*}

    If $\alpha\in\{0,1\}$, we have:
    \begin{align*}
        \sum_{\substack{1\leq n\leq N \\ n\equiv \alpha~(\mathrm{mod}~3)}} \varphi (n) & = \sum_{\substack{1\leq n\leq N \\ n\equiv \alpha~(\mathrm{mod}~3)}} \sum_{d\mid n}\mu(d)\frac{n}{d} = \sum_{d\leq N}\mu(d) \sum_{\substack{q\leq N/d \\ dq\equiv \alpha~(\mathrm{mod}~3)}} q \\
        & = \sum_{\substack{d\leq N \\ 3\mid d}}\mu(d) \sum_{\substack{q\leq N/d \\ 0\equiv \alpha~(\mathrm{mod}~3)}} q + \sum_{\substack{d\leq N \\ 3\nmid d}}\mu(d) \sum_{\substack{q\leq N/d \\ dq\equiv\alpha ~(\mathrm{mod}~3)}} q.
    \end{align*}
    If $\alpha =0$, we then get:
    \begin{align*}
        \sum_{\substack{1\leq n\leq N \\ n\equiv 0~(\mathrm{mod}~3)}} \varphi (n) & = \sum_{\substack{d\leq N \\ 3\mid d}}\mu(d) \sum_{q\leq N/d} q + \sum_{\substack{d\leq N \\ 3\nmid d}}\mu(d) \sum_{\substack{q\leq N/d \\ 3\mid q}} q \\
        & = \sum_{\substack{d\leq N \\ 3\mid d}}\mu(d) \left(\frac{1}{2}\left\lfloor\frac{N}{d}\right\rfloor^{2} + \frac{1}{2}\left\lfloor\frac{N}{d}\right\rfloor\right) + 3\sum_{\substack{d\leq N \\ 3\nmid d}}\mu(d) \sum_{t\leq \frac{N}{3d}} t \\
        & = \frac{N^{2}}{2}\sum_{\substack{d\leq N \\ 3\mid d}}\frac{\mu (d)}{d^{2}} + O\left(N\sum_{\substack{d\leq N \\ 3\mid d}}\frac{1}{d}\right) + 3\sum_{\substack{d\leq N \\ 3\nmid d}}\mu(d) \left(\frac{1}{2}\left\lfloor\frac{N}{3d}\right\rfloor^{2} + \frac{1}{2}\left\lfloor\frac{N}{3d}\right\rfloor\right) \\
        & = \left(\sum_{\substack{d\leq N \\ 3\mid d}}\frac{\mu (d)}{d^{2}} + \frac{1}{3}\sum_{\substack{d\leq N \\ 3\nmid d}}\frac{\mu (d)}{d^{2}}\right)\frac{N^{2}}{2} + O\left(N\log N\right) \\
        & = \left(\sum_{d\leq N}\frac{\mu (d)}{d^{2}} - \frac{2}{3}\sum_{\substack{d\leq N \\ 3\nmid d}}\frac{\mu (d)}{d^{2}}\right)\frac{N^{2}}{2} + O\left(N\log N\right) \\
        & = \left(\sum_{1\leq d\leq \infty}\frac{\mu (d)}{d^{2}} - \frac{2}{3}\sum_{\substack{1\leq d\leq \infty \\ 3\nmid d}}\frac{\mu (d)}{d^{2}}\right)\frac{N^{2}}{2} + O\left(N\log N\right) \\
        & = \left(\frac{3}{\pi^{2}} - \frac{1}{3}\sum_{\substack{1\leq d\leq \infty \\ 3\nmid d}}\frac{\mu(d)}{d^{2}}\right)N^{2} + O(N\log N) \\
        & = \left(\frac{3}{\pi^{2}} - \frac{1}{3}\cdot \frac{27}{4\pi^{2}}\right)N^{2} + O(N\log N) \\
        & = \frac{3}{4\pi^{2}} N^{2} + O(N\log N).
    \end{align*}
    Similarly, if $\alpha =1$, then:
    \begin{align*}
        \sum_{\substack{1\leq n\leq N \\ n\equiv \alpha~(\mathrm{mod}~3)}} \varphi (n) & = \sum_{\substack{d\leq N \\ 3\nmid d}} \mu(d) \sum_{\substack{q\leq N/d \\ q\equiv d~(\mathrm{mod}~3)}} q \\
        & = \sum_{\substack{d\leq N \\  d\equiv 1~(\mathrm{mod}~3)}} \mu(d)\sum_{t\leq \frac{N}{3d}-1} (3t+1) + \sum_{\substack{d\leq N \\  d\equiv 2~(\mathrm{mod}~3)}} \mu(d)\sum_{t\leq \frac{N}{3d}-2} (3t+2) \\
        & = \frac{3}{2}\sum_{\substack{d\leq N \\  d\equiv 1~(\mathrm{mod}~3)}} \mu(d)\left(\left\lfloor \frac{N}{3d}+1 \right\rfloor^{2} + \left\lfloor \frac{N}{3d}+1 \right\rfloor\right) \\
        & \hspace{3cm}+ \frac{3}{2}\sum_{\substack{d\leq N \\  d\equiv 2~(\mathrm{mod}~3)}} \mu(d)\left(\left\lfloor \frac{N}{3d}+2 \right\rfloor^{2} + \left\lfloor \frac{N}{3d}+2 \right\rfloor\right) \\
        & = \frac{N^{2}}{6} \sum_{\substack{d\leq N \\ 3\nmid d}}\frac{\mu (d)}{d^{2}} + O(N\log N) = \frac{N^{2}}{6} \sum_{\substack{d\leq \infty \\ 3\nmid d}}\frac{\mu (d)}{d^{2}} + O(N\log N) \\
        & = \frac{9}{8\pi^{2}}N^{2} + O(N\log N),
    \end{align*}
    and we are done.
\end{proof}

\begin{lemma}\label{LemFTconstant}
    The following inequality holds:
    \begin{align*}
        \prod_{p\in\mathbb{P}}\left(1-\frac{2}{p^{2}}\right) > 0.32.
    \end{align*}
\end{lemma}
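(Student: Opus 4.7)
This is the Feller--Tornier constant, whose numerical value is approximately $0.32263$; the bound $>0.32$ is therefore sharp to three decimals, and the argument has to be numerically careful. The plan is the standard splitting at a prime cutoff $P$: compute the head $\prod_{p\leq P}(1-2/p^{2})$ directly, and bound the tail $\prod_{p>P}(1-2/p^{2})$ from below by an elementary Taylor-type estimate on $\log(1-x)$.

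First I would establish the auxiliary inequality $\log(1-x)\geq -x-x^{2}$ for $x\in[0,1/2]$, which follows from noting that $f(x):=x+x^{2}+\log(1-x)$ satisfies $f(0)=0$ and $f'(x)=x(1-2x)/(1-x)\geq 0$ on $[0,1/2]$, hence $f\geq 0$ there. Applied with $x=2/p^{2}$ (valid for every prime $p\geq 2$) and summed over $p>P$, together with the crude comparisons $\sum_{p>P}1/p^{s}\leq\sum_{n>P}1/n^{s}\leq 1/((s-1)P^{s-1})$ from the integral test, this gives
\begin{align*}
    \prod_{p>P}\left(1-\frac{2}{p^{2}}\right) \geq \exp\left(-\frac{2}{P}-\frac{4}{3P^{3}}\right).
\end{align*}

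Second, for an explicit cutoff $P$ of order $10^{3}$, I would verify the finite inequality
\begin{align*}
    \prod_{p\leq P}\left(1-\frac{2}{p^{2}}\right) \cdot \exp\left(-\frac{2}{P}-\frac{4}{3P^{3}}\right) > 0.32
\end{align*}
by a direct product computation over the $\pi(P)$ primes. Since $\prod_{p\leq P}(1-2/p^{2})$ decreases to the Feller--Tornier constant $\approx 0.32263$ while $\exp(-2/P)$ increases to $1$, this finite check succeeds once $P$ is sufficiently large.

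The main obstacle is precisely this tightness: the slack $0.32263-0.32\approx 0.0026$ is comparable to $2/P$ for modest $P$, so small cutoffs (say $P\leq 300$) leave the crude tail bound $e^{-2/P}$ too lossy to conclude. Taking $P=1000$, however, gives $e^{-2/P}\approx 0.998$ against a head value close to $0.32270$, clearing $0.32$ with room to spare. A sharper tail estimate (e.g.\ from $\log(1-x)\geq -x/(1-x)$, or a Mertens-type bound on $\sum_{p>P}1/p^{2}$) would permit a smaller $P$, but such refinement is not needed.
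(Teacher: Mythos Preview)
Your argument is correct: the auxiliary inequality, the integral-test tail bound, and the resulting lower bound $\prod_{p>P}(1-2/p^{2})\geq\exp(-2/P-4/(3P^{3}))$ are all valid, and with $P=1000$ the head is indeed large enough that the finite check goes through (the margin is roughly $0.322$ versus $0.32$).

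However, your route is quite different from the paper's. The paper does not compute anything: it simply observes that the product in question equals $2C_{FT}-1$, where $C_{FT}=\tfrac{1}{2}+\tfrac{1}{2}\prod_{p}(1-2/p^{2})$ is the Feller--Tornier constant, and then cites the tabulated bound $C_{FT}>0.66$ from Finch's reference book, which immediately gives the claim. Your approach trades this external citation for a self-contained elementary argument, at the cost of a finite numerical product over $\pi(1000)=168$ primes that must in principle be carried out (you describe but do not perform it). The paper's version is shorter and avoids any computation, while yours is independent of the literature and makes the inequality verifiable from scratch.
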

\begin{proof}
    The product from the statement is related to the Feller-Tornier constant:
    \begin{align*}
        C_{FT} := \frac{1}{2} + \frac{1}{2}\prod_{p\in\mathbb{P}}\left(1-\frac{2}{p^{2}}\right),
    \end{align*}
    which is known to satisfy $C_{FT}>0.66$, see page 106 of \cite{Finch}. A simple computation yields the result.
\end{proof}

\begin{proof}[Proof of Theorem \ref{ThmCubicSquarefreeDelta}]
    We follow the proof of Theorem \ref{ThmCubic}. Recall that we consider polynomials of the form $f(X) = X^{3}-A_{1}X^{2}+A_{2}X-A_{3}$, and for every $A_{1}$ and $A_{2}$ understand its discriminant as a polynomial in $A_{3}$:
    \begin{align*}
        \Delta_{A_{2}} (A_{3}) := -27A_{3}^{2} + (-4A_{1}^{3} + 18A_{1}A_{2})A_{3} + (A_{1}^{2}A_{2}^{2}-4A_{2}^{3}).
    \end{align*}

    We obtain, from the proof of Theorem \ref{ThmCubic}, that
    \begin{align*}
       \#\mathcal{P}_{3}^{\square +}(A) & = \sum_{A_{2}\leq A^{2}/4} S\big(0,G_{+}(A_{2});\Delta_{A_{2}}\big) + \sum_{A^{2}/4<A_{2}\leq A^{2}/3} S\big(G_{-}(A_{2}),G_{+}(A_{2});\Delta_{A_{2}}\big).
    \end{align*}

    It is easy to check that if some prime $p$ divides the greatest common divisor of the coefficients of $\Delta_{A_{2}}$, then $p=3$. Moreover, it is possible only if $A_{2}$ and $A_{1}=A$ are both divisible by $3$, and then
    \begin{align*}
        \Delta_{A_{2}}(X) & = 27 \left[-X^{2} + \left(-4\left(\frac{A}{3}\right)^{3} + 6\cdot\frac{A}{3}\cdot\frac{A_{2}}{3}\right)X + \left(3\left(\frac{A}{3}\right)^{2}\left(\frac{A_{2}}{3}\right)^{2} - 4\left(\frac{A_{2}}{3}\right)^{3}\right)\right],
    \end{align*}
    so it cannot be square-free. Thus we can assume that there are no primes dividing the greatest common divisor of the coefficients of $\Delta_{A_{2}}$. 

    Recall that the discriminant of $\Delta_{A_{2}}$ is $16 (A^{2}-3A_{2})^{3}$. Hence, Theorem \ref{ThmSquarefreeQuadratic} implies for every $A_{2}$ and all $x,z<y$:
    \begin{align*}
        S(x,y;\Delta_{A_{2}}) & \geq (y-x) \prod_{\substack{p\leq z \\ p\mid 2(A^{2}-3A_{2})}} \left(1 - \frac{1}{p}\right)   \prod_{p\in\mathbb{P}} \left(1 - \frac{2}{p^{2}}\right)  - \rad(2(A^{2}-3A_{2}))\cdot 3^{\pi (z)} \\
        & \ \ \ \ - (y-x)\left( \frac{\omega (2(A^{2}-3A_{2}))}{z} + \frac{4}{z} \right) - \left(\sum_{\substack{z<p\leq F_{A_{2}} \\ p\mid 2(A^{2}-3A_{2})}} p + 2(\pi (F_{A_{2}}) - \pi (z))\right) \\
        & \geq (y-x) \prod_{\substack{p\leq z \\ p\mid 2(A^{2}-3A_{2})}} \left(1 - \frac{1}{p}\right)   \prod_{p\in\mathbb{P}} \left(1 - \frac{2}{p^{2}}\right)  - \rad(2(A^{2}-3A_{2}))\cdot 3^{\pi (z)} \\
        & \ \ \ \ - 8(y-x)\frac{\omega (2(A^{2}-3A_{2}))}{z} - \big(2(A^{2}-3A_{2})\omega(2(A^{2}-3A_{2})) + 2\pi (F_{A_{2}})\big).
    \end{align*}
    Here
    \begin{align*}
        F_{A_{2}} \leq \left(\max \Delta_{A_{2}}\right)^{1/2} = \frac{2}{3\sqrt{3}}\left(A_{1}^{2}-3A_{2}\right)^{3/2}. 
    \end{align*}
    
    Therefore,
    \begin{align*}
        & \sum_{A^{2}/4<A_{2}\leq A^{2}/3} S\big(G_{-}(A_{2}),G_{+}(A_{2});\Delta_{A_{2}}(A_{3})\big) \\
        & \geq \frac{4}{27} \prod_{p\in\mathbb{P}} \left(1 - \frac{2}{p^{2}}\right) \sum_{A^{2}/4<A_{2}\leq A^{2}/3} \left(A^{2}-3A_{2}\right)^{3/2} \prod_{\substack{p\leq z \\ p\mid 2(A^{2}-3A_{2})}} \left(1 - \frac{1}{p}\right) \\
        & \ \ \ \ - 3^{\pi (z)} \sum_{A^{2}/4<A_{2}\leq A^{2}/3} \rad(2(A^{2}-3A_{2})) - \frac{32}{27}\cdot \frac{1}{z}\sum_{A^{2}/4<A_{2}\leq A^{2}/3} \left(A^{2}-3A_{2}\right)^{3/2}\omega(2(A^{2}-3A_{2})) \\
        & \ \ \ \ - 2\sum_{A^{2}/4<A_{2}\leq A^{2}/3}(A^{2}-3A_{2})\omega(2(A^{2}-3A_{2})) -2 \sum_{A^{2}/4<A_{2}\leq A^{2}/3} \pi\left(\frac{4}{3\sqrt{3}}\left(A_{1}^{2}-3A_{2}\right)^{3/2}\right).
    \end{align*}
    Let us bound the above sums starting with the last one. \cite[Corollary 1] {RS62} gives $\pi(x)<2\frac{x}{\log x}$ for all $x$, so
    \begin{align*}
        \sum_{A^{2}/4<A_{2}\leq A^{2}/3} \pi\left(\frac{4}{3\sqrt{3}}\left(A_{1}^{2}-3A_{2}\right)^{3/2}\right) & \leq \frac{A^{2}}{3}\pi\left(A^{3}\right)<\frac{2}{9} \frac{A^{5}}{\log A}.
    \end{align*}
    For the second one we will use Lemma \ref{LemOmegaUpperBound}(1):
    \begin{align*}
        \sum_{A^{2}/4<A_{2}\leq A^{2}/3}(A^{2}-3A_{2})\omega(2(A^{2}-3A_{2}))  & \leq \frac{A^{2}}{3} \cdot A^{2}\omega\left(A^{3}\right) < 2A^{4}\log A. 
    \end{align*}
    Similarly for the next one:
    \begin{align*}
        \sum_{A^{2}/4<A_{2}\leq A^{2}/3} \left(A^{2}-3A_{2}\right)^{3/2}\omega(2(A^{2}-3A_{2})) & \leq \frac{A^{2}}{3}\cdot A^{3}\cdot  \omega\left(A^{3}\right) < A^{5} \frac{\log A}{\log\log A}.
    \end{align*}
    Finally, we have
    \begin{align*}
        \sum_{A^{2}/4<A_{2}\leq A^{2}/3} \rad(2(A^{2}-3A_{2})) & \leq \frac{A^{2}}{3}\cdot  2A^{2} = \frac{2}{3}A^{4}.
    \end{align*}
    Therefore, the whole error term can be bounded by
    \begin{align*}
        \frac{2}{3}\cdot 3^{\pi(z)}A^{4} +  \frac{16}{27} \cdot\frac{1}{z}\cdot A^{5}\frac{\log A}{\log\log A} + 4A^{4}\log A + \frac{4}{9} \frac{A^{5}}{\log A}. 
    \end{align*}
    By the bound $\pi(z)\leq 2\frac{z}{\log z}$ and by choosing for example $z=\frac{1}{3}\log A\log\log A$ we see that the above expression is bounded from above by:
    \begin{align*}
        \frac{2}{3}A^{4 + \frac{\log 9}{3}} + 16 \frac{A^{5}}{\left(\log\log A\right)^{2}} + 4A^{4}\log A + \frac{4}{9}\frac{A^{5}}{\log A}\ll \frac{A^{5}}{(\log\log A)^{2}}.
    \end{align*}

    It remains to find a lower bound for the main term. Lemma \ref{LemPhi} implies:
    \begin{align*}
        \sum_{A^{2}/4<A_{2}\leq A^{2}/3} & \left(A^{2} - 3A_{2}\right)^{3/2} \prod_{\substack{p\leq z \\ p\mid 2(A^{2}-3A_{2})}} \left(1 - \frac{1}{p}\right)= \sum_{\substack{1\leq n< \frac{1}{4}A^{2} \\ n\equiv A^{2}~(\mathrm{mod}~3)}} n^{3/2} \prod_{\substack{p\leq z \\ p\mid 2n}} \left(1 - \frac{1}{p}\right) \\
        & \geq \frac{1}{2} \sum_{\substack{\frac{1}{9}A^{2}\leq n< \frac{1}{4}A^{2} \\ n\equiv A^{2}~(\mathrm{mod}~3)}} n^{3/2} \prod_{\substack{ p\mid n}} \left(1 - \frac{1}{p}\right) \geq \frac{A}{6} \sum_{\substack{\frac{1}{9}A^{2}\leq n< \frac{1}{4}A^{2} \\ n\equiv A^{2}~(\mathrm{mod}~3)}} \varphi (n) \\
        & \geq \frac{1}{6}\cdot \frac{3}{4\pi^{2}}\left(\frac{1}{16}-\frac{1}{81}\right)A^{5} + O\left(A^{4}\log A\right).
    \end{align*}
    Therefore, the constant of the main term $A^{5}$ is
    \begin{align*}
        \frac{4}{27}\cdot \frac{1}{6}\cdot \frac{3}{4\pi^{2}}\left(\frac{1}{16}-\frac{1}{81}\right)\cdot \prod_{p\in\mathbb{P}}\left(1-\frac{2}{p^{2}}\right) >3\cdot 10^{-5}
    \end{align*}
    by Lemma \ref{LemFTconstant}. We are done.
\end{proof}

\section{Proof of Theorem \ref{ThmGeneralRoot}}

\begin{lemma}\label{LemTao}
    Let $(s_{1},\ldots ,s_{n})$ be a sequence of integers such that the polynomial 
    \begin{align*}
        X^n+\sum^n_{k=1}(-1)^k\binom{n}{k}s_kX^{n-k}
    \end{align*}
    has all its roots real. Then we have 
    \begin{align*}
        |s_{k}|^{1/k}\leq C \max_{j\in\{l,l+1\}}\left(\frac{k}{j}\right)^{1/2} |s_{j}|^{1/j}, 
    \end{align*}
    for all $1\leq l<k\leq n$, where $C=160e^{7}$.
\end{lemma}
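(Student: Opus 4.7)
The plan is to cite Tao's result directly from \cite{Tao}. The hypothesis that the polynomial $X^n + \sum_{k=1}^n (-1)^k \binom{n}{k} s_k X^{n-k}$ has only real roots places the normalized coefficients $s_k$ into precisely the framework Tao considers, namely the sequence of elementary symmetric means of a real (possibly signed) tuple of roots. His main quantitative Newton-type theorem supplies, for every pair of indices $1\leq l<k\leq n$, an inequality bounding $|s_k|^{1/k}$ by the maximum of $(k/l)^{1/2}|s_l|^{1/l}$ and $(k/(l+1))^{1/2}|s_{l+1}|^{1/(l+1)}$, times an absolute constant. The lemma as stated is a direct transcription of this bound.

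The key step is to extract Tao's inequality in the form above and to verify that his implicit constant can be taken to be at most $C=160e^7$ in the present normalization. The reason one must take the maximum over $j\in\{l,l+1\}$, rather than use a single index, is a genuine feature of the signed case: when $s_l$ happens to be anomalously small (or zero) compared to $s_{l+1}$, the one-sided inequality $|s_k|^{1/k}\leq C'(k/l)^{1/2}|s_l|^{1/l}$ fails, but passing to the neighbouring index recovers it. This flexibility is built into Tao's statement and is what allows one to dispense with any sign or nondegeneracy assumptions on individual $s_j$.

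The main obstacle is bookkeeping. Tao proves the bound by establishing a cascade of refinements of the classical Newton inequality $s_k^2 \geq s_{k-1}s_{k+1}$ (valid for any real-rooted polynomial) and iterating them across the index range $l,l+1,\ldots,k$; each iteration contributes a uniformly bounded factor, and a geometric series argument collapses the accumulated constants into a single absolute constant. The explicit value $160e^7$ is obtained by substituting the numerical estimates from his proof. Since the work has already been done in \cite{Tao}, no new analytic input is needed here; the only task is to rewrite his inequality in the symmetric form above and confirm the constant. If a direct citation is deemed insufficient, one could alternatively reproduce the argument by iterating Newton's inequality together with its signed refinement from \cite{Tao} and carefully tracking constants across $k-l\leq n$ steps.
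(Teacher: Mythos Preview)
Your proposal is correct and matches the paper's approach exactly: the paper's entire proof is the single sentence ``This is Theorem 1.1 in \cite{Tao}.'' Your additional commentary on why the max over $j\in\{l,l+1\}$ is needed and how the constant arises is accurate but goes beyond what the paper records.
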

\begin{proof}
    This is Theorem 1.1 in \cite{Tao}.
\end{proof}

\begin{proof}[Proof of Theorem \ref{ThmGeneralRoot}]
    Apply Lemma \ref{LemTao} with $l=1$ and $s_{k}:=\frac{1}{\binom{n}{k}}A_{k}$. We immediately obtain that for every $k\geq 3$:
    \begin{align*}
        |A_{k}|\leq C^{k} k^{k/2} \max\left\{\frac{1}{\binom{n}{1}}|A_{1}|,\left(\frac{1}{2\binom{n}{2}}|A_{2}|\right)^{1/2}\right\}^{k}= C^{k} k^{k/2} M(A_{1},A_{2})^{k}.
    \end{align*}
    Therefore, for every $k$ we have at most
    \begin{align}
        2C^{k} k^{k/2} M(A_{1},A_{2})^{k}+1\leq 3C^{k} k^{k/2} M(A_{1},A_{2})^{k}
    \end{align}
    choices of $A_{k}$. By multiplying the above numbers, we get the result.
\end{proof}

\section{Proof of Theorem \ref{ThmUpBoundW1}}

\begin{proof}[Proof of Theorem \ref{ThmUpBoundW1}]
    Let $f(X)=X^{3}-AX^{2}+BX-A_{3}$. From the assumptions we have
    \begin{align*}
        \Delta_{f}-D = -27A_{3}^{2} + (-4A^{3}+18AB)A_{3}+(A^{2}B^{2}-4B^{3}) -D \leq 0.
    \end{align*}
    Let us consider the above expression as a polynomial in $A_{3}$. This gives two possibilities depending on the value of
    \begin{align*}
        \Delta := \Delta_{\Delta_{f}-D} = 16(A^2-3B)^{3} - 4\cdot 27D.
    \end{align*}
    \begin{itemize}
        \item if $\Delta\leq 0$, then $\Delta_{f}\leq D$ for all possible values of $A_{3}$. By \eqref{IneqCubicA3} we have
        \begin{align*}
            \mathcal{W}(A,B;D)\leq \lfloor G_{+}(A,B)\rfloor - \lceil G_{-}(A,B)\rceil \leq \frac{4}{27}\left(A^{2}-3B\right)^{3/2}.
        \end{align*}
        \item if $\Delta >0$ then either
        \begin{align*}
            A_{3}\leq \frac{1}{27}\left(9AB-2A^{3}-2\sqrt{\left(A_{1}-3B\right)^{3}-\frac{27}{4}D}\right)
        \end{align*}
        or
        \begin{align*}
            A_{3}\geq \frac{1}{27}\left(9AB-2A^{3}+2\sqrt{\left(A_{1}-3B\right)^{3}-\frac{27}{4}D}\right).
        \end{align*}
        Therefore, thanks to \eqref{IneqCubicA3}, we obtain
        \begin{align*}
            \mathcal{W}(A,B;D)\leq \frac{4}{27}\left(\left(A^{2}-3B\right)^{3/2} - \left(\left(A^{2}-3B\right)^{3}-\frac{27}{4}D\right)^{1/2}\right) < \frac{D}{\left(A^{2}-3B\right)^{3/2}}.
        \end{align*}
        The last inequality follows from the following trivial computations which are valid for all positive numbers $Z,T$ with $T<Z$:
        \begin{align*}
            Z^{1/2}-(Z-T)^{1/2} = \frac{T}{Z^{1/2}+(Z-T)^{1/2}} < \frac{T}{Z^{1/2}}.
        \end{align*}
    \end{itemize}
    It is easy to check that in both cases the expression obtained is less than or equal to $\frac{2}{3\sqrt{3}}\sqrt{D}$. The result follows.
\end{proof}
\begin{remark}
     Theorem \ref{ThmUpBoundW1} can be used to estimate the number of cubic monic integral polynomials with only real roots and bounded height and discriminant. Indeed, the number of polynomials $f(X)=X^{3}-AX^{2}+BX-C$ with only real roots and such that $|A|,|B|,|C|\leq H$ and $\Delta_{f}\leq D$ is bounded from above by
     \begin{align*}
         \sum_{|A|\leq H}\ \sum_{|B|\leq H} \#\mathcal{P}_3(A,B;D)\leq \frac{2}{3\sqrt{3}}D^{1/2} (2H+1)^{2} \leq 2\sqrt{3} H^{2} D^{1/2}
     \end{align*}
     if $H\geq 1$. This can be compared with the bound $\leq c(\varepsilon)H^{2/3+\varepsilon}D^{5/6}$ in \cite[Theorem 13]{Badziahin} for the number of all cubic polynomials with coefficients $\leq H$ and discriminant $\leq D$, where $c(\varepsilon)>0$ is a positive constant depending on $\varepsilon$. The bound from Theorem \ref{ThmUpBoundW1} is better if $D\geq \frac{24\sqrt{3}}{c(\varepsilon)^{3}} H^{4-3\varepsilon}$. The latter is true if $D\geq H^{4}$ and $H$ is large enough. 
\end{remark}

\section{Cubic polynomials with almost prime discriminants}

\begin{theorem}\label{ThmAlmostPrimeDisc}
    Let $A$ and $B$ be integers such that $A^{2}-3B$ is not a square. Then there exist
    \begin{align*}
        \gg \frac{H}{(\log H)^{2}}
    \end{align*}
    values of $C\in \{-H,-H+1,\ldots,H-1,H\}$ such that the discriminant of the polynomial $f(X):=X^{3}-AX^{2}+BX-C$ has at most two prime divisors. The implied constant is universal.
\end{theorem}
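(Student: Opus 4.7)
The plan is to view the discriminant as a quadratic polynomial in $C$ and invoke Iwaniec's theorem \cite{Iwaniec} on almost-prime values of irreducible quadratic polynomials. Writing
\begin{align*}
g(C) \;:=\; -27 C^{2} + (18AB - 4A^{3})\, C + (A^{2} B^{2} - 4 B^{3}) \;=\; \Delta_{f},
\end{align*}
we have from the computation already performed in the proof of Theorem \ref{ThmCubic} that the discriminant of $g$, regarded as a polynomial in $C$, equals $16(A^{2}-3B)^{3}$. The hypothesis that $A^{2}-3B$ is not a perfect square therefore guarantees that this is not a rational square, so $g$ is irreducible over $\mathbb{Q}$.

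Next I would pass to the associated primitive polynomial: set $d := \gcd$ of the three coefficients of $g$ and write $-g(C) = d \cdot h(C)$, where $h \in \mathbb{Z}[C]$ is primitive, irreducible, has positive leading coefficient, and still has non-square discriminant. Note that $d \mid 27$, so $\omega(d) \le 1$. The standard local hypothesis of Iwaniec's sieve (no fixed prime divides every value of $h$) by primitivity reduces to a short inspection at $p = 2, 3$. Iwaniec's theorem then yields
\begin{align*}
\#\bigl\{ C \in [-H,H] : h(C) \text{ has at most two prime factors}\bigr\} \;\gg\; \frac{H}{(\log H)^{2}}.
\end{align*}

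Finally, since $\Delta_{f} = \pm d\, h(C)$ with $d \in \{1,3,9,27\}$, the bound on $\omega(h(C))$ transfers to one on $\omega(\Delta_{f})$ up to at most one additional prime (namely $3$). To land in the stated range $\omega(\Delta_{f}) \le 2$, I would either restrict $C$ to a positive-density residue class on which the prime $3$ is absorbed into $h(C)$, or equivalently apply Iwaniec's sieve directly to $g(C)$ treating any fixed prime in $d$ as automatically allowed. The main obstacle I anticipate is precisely this bookkeeping around the content $d$, together with the verification of Iwaniec's local conditions uniformly in $A$ and $B$: irreducibility is handed to us by the non-square-discriminant hypothesis, but the parity and mod-$3$ checks required to ensure the sieve actually outputs a bound on $\omega(\Delta_{f})$ (rather than merely on $\omega(h(C))$) form the only nontrivial verification.
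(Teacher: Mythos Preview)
Your approach is essentially the paper's: regard $\Delta_f$ as a quadratic in $C$, observe that its discriminant is $16(A^{2}-3B)^{3}$ so the hypothesis forces irreducibility over $\mathbb{Q}$, and then invoke Iwaniec. The paper applies Iwaniec directly to $\Delta_f$, checks $\rho(2)=1$ via $\Delta_f(C)\equiv (C-AB)^{2}\pmod 2$, asserts $\rho(p)\le 2$ for all $p$, and bounds the resulting Euler product by $\prod_{p\ge 3}(1-2/p)/(1-1/p)$ to pass from $H/\log H$ to the uniform $H/(\log H)^{2}$.

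Your extra step of extracting the content $d\mid 27$ is actually more careful than the paper on one point. When $3\mid\gcd(A,B)$ one has $27\mid\Delta_f(C)$ for every $C$, so $\rho(3)=3$ and the blanket claim $\rho(p)\le 2$ fails; for all other $(A,B)$ the content is $1$ and the two arguments coincide. The obstacle you flag is therefore real, but neither of your proposed fixes fully closes it. If $3\mid\gcd(A,B)$ and the primitive part $h$ has \emph{no} root modulo $3$ (this occurs, e.g., for $A=3$, $B=6$, where $h(C)=C^{2}-8C+20\equiv C^{2}+C+2\pmod 3$), then $\Delta_f=-27\,h(C)$ with $3\nmid h(C)$ for every $C$, so $\omega(\Delta_f)=1+\omega(h(C))$. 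Getting $\omega(\Delta_f)\le 2$ then requires $h(C)$ to be a prime power rather than merely a $P_{2}$, which is beyond Iwaniec. Restricting $C$ to a residue class cannot manufacture a factor of $3$ in $h(C)$ that is never there, and ``allowing'' $3$ in the sieve only yields $\omega(\Delta_f)\le 3$.

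In short: same strategy, and your instinct to isolate the content is sound; but the bookkeeping you single out as ``the only nontrivial verification'' is in fact a genuine gap in this edge case, present in both your sketch and the paper's argument. It is harmless for the downstream Theorem~\ref{ThmAlmostPrimeDiscLowBound}, since one may simply discard the pairs with $3\mid\gcd(A,B)$ at no asymptotic cost.
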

\begin{proof}
    Recall that the discriminant of $f(X)$ is the following quadratic polynomial in $C$:
    \begin{align*}
        \Delta_{f}(C) = -27C^{2} + (-4A^{3}+18AB)C + (A^{2}B^{2}-4B^{3}).
    \end{align*}
    Its discriminant is $\Delta :=\Delta_{\Delta_{f}} = 16(A^{2}-3B)^{3}$. Therefore, the assumptions on $A$ and $B$ imply that the polynomial $\Delta_{f}$ is irreducible. Therefore, we can apply the main results in the papers \cite{Iwaniec,Oliver}. We obtain the following estimate:
    \begin{align*}
        \gg \prod_{p\in\mathbb{P}}\frac{1-\frac{\rho (p)}{p}}{1-\frac{1}{p}} \cdot \frac{H}{\log H},
    \end{align*}
    where $\rho_f(p)$ denotes the number of incongruent solutions of congruence $\Delta_{f}(X)\equiv 0\pmod{p}$. Now note that $\rho_f(2)=1$ since $\Delta_{f}(C)\equiv (C-AB)^{2} \pmod{2}$, so the product appearing in the estimate is nonzero. Moreover, for every prime $p$, $\rho_f(p)\leq 2$. Hence,
    \begin{align*}
        \prod_{p\in\mathbb{P}}\frac{1-\frac{\rho (p)}{p}}{1-\frac{1}{p}} \cdot \frac{H}{\log H}\geq \prod_{\substack{p\in\mathbb{P} \\ p\geq 3}}\frac{1-\frac{2}{p}}{1-\frac{1}{p}} \cdot \frac{H}{\log H} \gg \frac{H}{(\log H)^{2}},
    \end{align*}
    as desired.
\end{proof}

The question arises of how restrictive the condition that $A^{2}-3B$ is not a square is. It appears that it is not very restrictive and, in fact, it is satisfied for almost all pairs $(A,B)$, which is an immediate corollary of the following proposition:

\begin{prop}\label{PropAlmostPrime}
    For every $H\geq 2$ we have
    \begin{align*}
        \# \left\{ (A,B)\in\mathbb{Z}^{2}\ |\ -H\leq A,B\leq H,\ A^{2}-3B \textrm{ is a square}  \right\} \asymp H\log H.
    \end{align*}
\end{prop}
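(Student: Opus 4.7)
The plan is to parametrize by the square root. Writing $A^{2}-3B=k^{2}$ with $k\in\mathbb{Z}_{\geq 0}$ gives $B=(A^{2}-k^{2})/3$, and integrality of $B$ is the congruence $A^{2}\equiv k^{2}\pmod{3}$, which is equivalent to the link $3\mid A\Leftrightarrow 3\mid k$. Hence the count equals the number of pairs $(A,k)\in\mathbb{Z}\times\mathbb{Z}_{\geq 0}$ with $|A|\leq H$, $|A^{2}-k^{2}|\leq 3H$, and the above divisibility link. For every fixed $A$ the link is satisfied by a positive density of $k$ (either $1/3$ or $2/3$ of them, according to whether $3\mid A$), so congruence losses cost only a bounded factor.

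Next I fix $A$ and count admissible $k$. The condition $|A^{2}-k^{2}|\leq 3H$ rearranges to
\[
\sqrt{\max(0,A^{2}-3H)}\leq k\leq \sqrt{A^{2}+3H}.
\]
I split into two regimes. For $|A|\leq \sqrt{3H}$ the interval lies in $[0,\sqrt{6H}]$, giving $O(\sqrt{H})$ integer values of $k$; multiplied by the $O(\sqrt{H})$ values of $A$, this regime contributes $O(H)$ pairs overall. For $\sqrt{3H}<|A|\leq H$ I use
\[
\sqrt{A^{2}+3H}-\sqrt{A^{2}-3H}=\frac{6H}{\sqrt{A^{2}+3H}+\sqrt{A^{2}-3H}}\asymp \frac{H}{|A|},
\]
so the interval contains $\asymp H/|A|$ integers.

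Summing over $A$ then yields
\[
\sum_{\sqrt{3H}<|A|\leq H}\frac{H}{|A|}\asymp H\bigl(\log H-\tfrac{1}{2}\log(3H)\bigr)\asymp H\log H,
\]
which dominates the $O(H)$ contribution from the first regime. For the upper bound it suffices to count $\leq H/|A|+O(1)$ integers per $A$ and sum. For the lower bound I keep only the $1/3$ or $2/3$ fraction of $k$ in the correct congruence class; this preserves the order of magnitude.

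The main (minor) technical issue is that the interval length $H/|A|$ becomes $O(1)$ when $|A|$ is close to $H$, so the congruence class might fail to be hit for some exceptional $A$. I would handle this by restricting the lower-bound sum to $\sqrt{3H}<|A|\leq H/c_{0}$ for a suitable absolute constant $c_{0}$, so that $H/|A|\geq c_{0}$ guarantees $\Omega(H/|A|)$ integers in the right residue class mod $3$; the remaining range $H/c_{0}<|A|\leq H$ only contributes $O(H)$ to the upper bound and is harmless. Apart from this bookkeeping the argument is straightforward.
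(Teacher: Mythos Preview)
Your proposal is correct and follows essentially the same approach as the paper: both parametrize by the square root $k$ (the paper calls it $Z$), rewrite the condition as $|A^{2}-k^{2}|\leq 3H$ together with $A^{2}\equiv k^{2}\pmod 3$, split into the regimes $|A|\leq\sqrt{3H}$ and $|A|>\sqrt{3H}$, and use $\sqrt{A^{2}+3H}-\sqrt{A^{2}-3H}\asymp H/|A|$ to reduce to a harmonic sum. The only real difference is in the lower-bound bookkeeping for the congruence mod~$3$: the paper simply restricts to $3\mid A$ and $3\mid k$ (writing $A=3A_{1}$, $Z=3Z_{1}$), which makes the congruence automatic and avoids any short-interval issue, whereas you keep all $A$ and truncate to $|A|\leq H/c_{0}$ so that the $k$-interval is long enough to hit the required residue class; both devices work and cost only a constant factor.
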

\begin{proof}
Observe, that
\begin{align*}
    \sum_{\substack{-H\leq A,B\leq H \\ A^{2}-3B \textrm{ is a square}}} 1 & = \sum_{Z^{2}\leq H^{2} + 3H}\ \sum_{\substack{-H\leq A,B\leq H \\ A^{2}-3B=Z^{2}}} 1 = \sum_{Z^{2}\leq H^{2} + 3H}\ \sum_{\substack{-H\leq A\leq H \\ 3\mid A^{2}-Z^{2} \\ |A^{2}-Z^{2}|\leq 3H}} 1 \\
    & = \sum_{-H\leq A\leq H} \sum_{{\substack{Z \\ 3\mid A^{2}-Z^{2} \\ |A^{2}-Z^{2}|\leq 3H}}} 1 = \sum_{-H\leq A\leq H} \ \sum_{\substack{\sqrt{\max\{A^{2}-3H,0\}}\leq Z\leq \sqrt{A^{2}+3H} \\ A^{2}\equiv Z^{2}~(\mathrm{mod}~3)}} 1.
\end{align*}
For the upper bound, we have that the above expression is further:
\begin{align*}
    & \leq \sum_{-H\leq A\leq H} \left(\sqrt{A^{2}+3H} - \sqrt{\max\{A^{2}-3H,0\}}\right) \\
    & = \sum_{\substack{-H\leq A\leq H \\ |A|\geq \sqrt{3H}}} \frac{6H}{\sqrt{A^{2}+3H} + \sqrt{A^{2}-3H}} + \sum_{\substack{-H\leq A\leq H \\ |A| < \sqrt{3H}}} \sqrt{A^{2}+3H} \\
    &< 12H \sum_{\sqrt{3H}\leq A\leq H} \frac{1}{A} + \sqrt{6H}\sum_{-\sqrt{3H}\leq A\leq \sqrt{3H}} 1 \ll H\log H + H \ll H\log H.
\end{align*}
Similarly for the lower bound:
\begin{align*}
    & \geq \sum_{-H/3 \leq A_{1}\leq H/3} \ \sum_{\frac{1}{3}\sqrt{\max\{9A_{1}^{2}-3H,0\}} \leq Z_{1}\leq \frac{1}{3}\sqrt{9A_{1}+3H}} 1 \\
    & \geq \sum_{\substack{-H/3\leq A_{1}\leq H/3 \\ A_{1}^{2}\geq H/3} } \left(\sqrt{9A_{1}^{2}+3H} - \sqrt{9A_{1}^{2}-3H} -2\right) \\
    & = 2\sum_{\sqrt{H/3}\leq A_{1}\leq H/3} \frac{6H}{\sqrt{9A_{1}^{2}+3H} + \sqrt{9A_{1}^{2}-3H}} + O(H) \\
    & \geq 2H \sum_{\sqrt{H/3}\leq A_{1}\leq H/3} \frac{1}{A_{1}} + O(H) \gg H\log H,
\end{align*}
because of the elementary inequality $\sqrt{x+y} + \sqrt{x-y} \leq 2\sqrt{x}$, which is true for all $x\geq y\geq 0$. The result follows.
\end{proof}

Theorem \ref{ThmAlmostPrimeDiscLowBound} follows immediately.

\begin{proof}[Proof of Theorem \ref{ThmAlmostPrimeDiscLowBound}.]
    Let $H>1$. For all integers $A$, $B$, $C$ denote by $\Delta_{A,B,C}$ the discriminant of the polynomial $X^{3}-AX^{2}+BX-C$.
    
    Theorem \ref{ThmAlmostPrimeDisc} and Proposition \ref{PropAlmostPrime} imply:
    \begin{align*}
        \#\{(A,B,C)\in  &  \mathbb{Z}^{3} | A,B,C\in[-H,H],\ \omega\left(\Delta\right)\leq 2 \} \geq \sum_{\substack{-H\leq A,B\leq H \\ A^{2}-3B \textrm{ is not a square}}}\ \sum_{\substack{-H\leq C\leq H \\ \omega (\Delta_{A,B,C})\leq 2}} 1 \\
        & \gg \frac{H}{(\log H)^{2}} \#\left\{(A,B)\in \mathbb{Z}^{2} | -H\leq A,B\leq H,\ A^{2}-3B\ \textrm{is not a square} \right\} \\
        & \gg \frac{H^{3}}{(\log H)^{2}},
    \end{align*}
    and we are done.
\end{proof}

\bibliographystyle{alpha}
\bibliography{pub}

\end{document}